\newcommand{\sph}[1]{\ensuremath{\mathbb{S}^{#1}}}
\newcommand{\sss}[3]{\ensuremath{\mathbb{S}^{#1}\times\mathbb{S}^{#2}\times\mathbb{S}^{#3}}}
\newcommand{\iso}{\ensuremath{\cong}}
\newcommand{\Z}[1][]{\ensuremath{\mathbb{Z}_{#1}}}
\newcommand{\Q}{\ensuremath{\mathbb{Q}}}
\newcommand{\R}{\ensuremath{\mathbb{R}}}
\newcommand{\I}{\ensuremath{\mathbb{I}}}
\newcommand{\D}{\ensuremath{\mathbb{D}}}
\newcommand{\E}{\ensuremath{\mathbb{E}}}
\newcommand{\N}{\ensuremath{\mathbb{N}}}
\newcommand{\Nil}{\ensuremath{\mathrm{Nil}}}
\newcommand{\HR}{\ensuremath{\mathbb{H}^2\times\R}}
\newcommand{\SLR}{\ensuremath{\widetilde{\mathrm{SL}_2(\R)}}}
\newcommand{\nsgp}[1][]{\ensuremath{\triangleleft_{\rm #1}}}
\newcommand{\sbgp}[1][]{\ensuremath{\leq_{\rm #1}}}
\newcommand{\ofg}[1][]{\ensuremath{\pi_1^{\text{orb}}#1}}
\newcommand{\oec}[1][]{\ensuremath{\chi^{\text{orb}}#1}}
\newtheorem{theorem}{Theorem}[section]
\newtheorem*{thmquote}{Theorem}
\newtheorem{prop}[theorem]{Proposition}
\newtheorem{lem}[theorem]{Lemma}
\newtheorem{clly}[theorem]{Corollary}
\theoremstyle{definition}
\newtheorem{defn}[theorem]{Definition}
\newtheorem*{cnv}{Conventions}
\theoremstyle{remark}
\newtheorem*{rmk}{Remark}
\newcommand{\SFS}{Seifert fibre space}
\newcommand{\pZB}{\ensuremath{ \hat \Z [[\hat B]]}}
\theoremstyle{plain}
\newtheorem*{intro}{Theorem}
\theoremstyle{definition}
\newtheorem*{introdef}{Definition \ref{profrigdef}}
\title{Profinite rigidity for Seifert fibre spaces}
\author{Gareth Wilkes}
\date{December 17, 2015}
\begin{document}
\maketitle
\begin{abstract}
An interesting question is whether two 3-manifolds can be distinguished by computing and comparing their collections of finite covers; more precisely, by the profinite completions of their fundamental groups. In this paper, we solve this question completely for closed orientable Seifert fibre spaces. In particular, all \SFS{}s are distinguished from each other by their profinite completions apart from some previously-known examples due to Hempel. We also characterize when bounded \SFS{} groups have isomorphic profinite completions, given some conditions on the boundary.
\end{abstract}
\section{Introduction}
One possible algorithm to solve the homeomorphism problem for 3-manifolds could run as follows. Given two triangulated 3-manifolds $M_1$ and $M_2$, perform Pachner moves on $M_1$ to try to establish a homeomorphism with $M_2$. In parallel, compute a list of finite-sheeted covers of the two manifolds, along with regularity of the covers and the group of deck transformations. If at some covering degree a difference appears, the two manifolds will be shown to be non-homeomorphic.

The question arises, to what extent will this algorithm work? That is, could the collections of covers of two distinct 3-manifolds have the same structure? This is a manifestation of the wider question of when two groups have the same set of finite quotients. The na\"ive statement in terms of sets of finite quotients is usually replaced with an equivalent formulation concerning the profinite completions of the two groups. The question is then one of `profinite rigidity'. We make the following definition:
\begin{introdef}
An (orientable) 3-manifold is {\em profinitely rigid} if the profinite completion distinguishes its fundamental group from all other fundamental groups of (orientable) 3-manifolds.
\end{introdef} 
 In dimension 2, the analogous property is known to hold by work of Bridson, Conder, and Reid \cite{BCR14}, who showed that the profinite completion distinguishes 2-orbifold groups not just from each other, but from all lattices in connected Lie groups.

For 3-manifolds, only a few examples are known to be profinitely rigid. Bridson and Reid \cite{bridsonreid15} and Boileau and Friedl \cite{boileaufriedl} have proved that the figure-eight knot group is profinitely rigid among 3-manifold groups, along with a handful of other knot groups. By contrast, there are large families known not to be profinitely rigid. Funar \cite{funar13} built on work of Stebe \cite{stebe72} to give infinite families of Sol manifolds with the same finite quotients. Hempel \cite{hempel14} gave Seifert fibred families, with geometry \HR.

These examples notwithstanding, the profinite completion of the fundamental group of a low-dimensional manifolds is known to contain a large amount of information. For instance, Wilton and Zalesskii \cite{WZ14} have shown that the geometry (if any) of a 3-manifold is detected by the profinite completion. In particular, \SFS{}s are distinguished from all other 3-manifolds. Lackenby \cite{lackenby14} has shown that the pro-2 completion of a 3-manifold group determines whether that 3-manifold contains a pair of embedded surfaces which do not disconnect the manifold.

In this paper, we provide the full solution of the profinite rigidity question for closed orientable Seifert-fibred 3-manifolds. In effect, the above-cited examples of Hempel \cite{hempel14} are the only failures of profinite rigidity among these manifolds. The precise statement, when combined with the work in \cite{WZ14}, is:
\begin{intro}
Let $M$ be a (closed orientable) \SFS{}. Then either:
\begin{itemize}
\item $M$ is profinitely rigid; or 
\item $M$ has the geometry \HR{}, is a surface bundle with periodic monodromy $\phi$ and the only 3-manifolds whose fundamental groups have the same finite quotients as $\pi_1 M$ are the surface bundles with monodromy $\phi^k$, for $k$ coprime to the order of $\phi$.
\end{itemize}
\end{intro}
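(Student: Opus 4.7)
The plan is to combine the geometric reduction of Wilton--Zalesskii \cite{WZ14}, which guarantees that any 3-manifold with the same finite quotients as a \SFS{} is itself Seifert fibred, with a careful profinite reconstruction of the Seifert data. The task reduces to: given two closed orientable \SFS{}s $M_1,M_2$ with $\widehat{\pi_1 M_1}\iso\widehat{\pi_1 M_2}$, recover their normalized Seifert invariants up to the exceptional ambiguity allowed by the Hempel examples. I would proceed in three stages: (i) identify the fibre subgroup inside $\widehat{\pi_1 M}$; (ii) identify the base 2-orbifold from the quotient; (iii) compare the local Seifert invariants and the Euler number under the induced isomorphism.

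For stage (i), I would use the fact that for \SFS{}s with infinite fundamental group, the regular fibre subgroup $\langle h\rangle\sbgp\pi_1 M$ has finite index in the centre, and that $\pi_1 M$ is good in the sense of Serre. The closure $\overline{\langle h\rangle}\sbgp\widehat{\pi_1 M}$ should then be characteristic---for instance, by identifying it with the closure of the centre, or as a canonical kernel in the abelianised profinite completion. The quotient $\widehat{\pi_1 M}/\overline{\langle h\rangle}$ is then isomorphic to $\widehat{\ofg[(B)]}$, and any profinite isomorphism between $\widehat{\pi_1 M_1}$ and $\widehat{\pi_1 M_2}$ descends to one between $\widehat{\ofg[(B_1)]}$ and $\widehat{\ofg[(B_2)]}$. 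Invoking the 2-orbifold profinite rigidity of Bridson--Conder--Reid \cite{BCR14}, or a tailored version suitable for our setting, one concludes $B_1\iso B_2$. The cases of finite $\pi_1$ and $\sph{2}\times\R$ geometry are handled separately by direct inspection.

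With a common base orbifold $B$, the remaining data sits in the central extension
\[ 1 \to \overline{\langle h\rangle} \to \widehat{\pi_1 M} \to \widehat{\ofg[(B)]} \to 1. \]
An isomorphism between two such extensions induces multiplication by some unit $u\in\hat\Z^\times$ on $\overline{\langle h\rangle}\iso\hat\Z$, together with a compatible automorphism of $\widehat{\ofg[(B)]}$. The local Seifert invariants $\beta_i/\alpha_i$ are recovered from the cone-point relations $q_i^{\alpha_i}=h^{\beta_i}$, and the rational Euler number $e(M)\in\Q$ is visible as the extension class relative to a symplectic basis of the base. The upshot is that the profinite completion determines the Seifert data up to the simultaneous action of $u$; whenever this action can be absorbed into the standard equivalences on Seifert invariants---essentially, when an honest integer/rational quantity pins $u$ down to $\pm 1$---one recovers $M$ uniquely and profinite rigidity holds.

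The main obstacle, and the entire source of non-rigidity, is the case in which the $u$-action genuinely changes the Seifert fibration. Since $u\in\hat\Z^\times$ acts freely on $\hat\Z$ but respects a nonzero rational class only by $\pm 1$, this can occur only when the normalised $e(M)$ and the individual $\beta_i/\alpha_i$ admit room to move, which forces $B$ to be hyperbolic and $e(M)=0$, so that $M$ has geometry \HR{} and is a surface bundle over $\sph{1}$ with periodic monodromy $\phi$ of some order $n$. The key computation is to trace the $\hat\Z^\times$-ambiguity through this mapping-torus structure and show that it produces precisely the family of bundles with monodromy $\phi^k$ for $k$ coprime to $n$, recovering Hempel's examples and producing no further ambiguity. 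Verifying both directions---that every profinite isomorphism in the exceptional case arises from such a reparametrisation of the fibre class, and conversely that all the $\phi^k$ really do share finite quotients---will require explicit manipulation of the presentation of $\pi_1 M$ and of the action of $\hat\Z^\times$ on the central $\hat\Z$ factor, and is where I expect the bulk of the technical work to lie.
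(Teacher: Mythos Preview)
Your overall strategy matches the paper's: reduce to \SFS{} versus \SFS{} via Wilton--Zalesskii, pin down the fibre subgroup, identify the base orbifold via Bridson--Conder--Reid, and then compare central extension classes in $H^2$. Two points, however, are underspecified in a way that hides the real work.

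First, the fibre subgroup is not simply ``the closure of the centre''. When the base orbifold is non-orientable, $h$ is only virtually central; and even for orientable base you must show that the profinite centre is no larger than $\overline{\langle h\rangle}$. The paper handles this by proving that $\widehat{\pi_1 M}$ has a \emph{unique} maximal virtually central normal procyclic subgroup (outside the geometries $\sph{3}$, $\sph{2}\times\R$, $\E^3$), which uses the centrelessness of profinite completions of hyperbolic surface groups and a spectral-sequence computation to rule out a second such subgroup over a Euclidean base.

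Second, and more seriously, you track only the unit $u\in\hat\Z^\times$ acting on the fibre, but not the induced automorphism $\phi$ of $\widehat{\ofg[(B)]}$ acting on $H^2(\hat B;\Z/t)$. The extension class can move under $\phi^\ast$ as well as under coefficient rescaling, and \emph{a priori} $\phi^\ast$ could be a complicated linear map on $H^2$, not a scalar. The paper's technical core is precisely the computation, via an explicit partial free resolution of $\hat\Z$ over $\hat\Z[[\hat B]]$ (transported from the discrete resolution using goodness of $B$), that any such $\phi^\ast$ acts on $H^2(\hat B;\Z/n)$ as multiplication by a single $\kappa\in\hat\Z$, with $\kappa\equiv k_i\pmod{p_i}$ for the exponents by which $\phi$ twists the cone-point generators. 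Only once this scalarity is established does your heuristic ``a unit respecting a nonzero rational class must be $\pm 1$'' become the clean Euler-number argument forcing $\kappa=1$ when $e\neq 0$. Without this computation there is no reason the extension classes for $M_1$ and $M_2$ should differ by a scalar at all, and the reduction to the $\hat\Z^\times$-action you describe does not go through.
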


The theorems of \cite{WZ14} are stated for closed manifolds, so we will be a little more circumspect about asserting profinite rigidity among all 3-manifolds. However we may still resolve the rigidity question among \SFS{}s. For the precise statements see Theorems \ref{mainbddthm} and \ref{conversebdd}; in summary
\begin{intro}
Let $M_1$, $M_2$ be \SFS{}s with non-empty boundary. Then the following are equivalent:
\begin{itemize}
\item $\widehat{\pi_1 M_1}\iso \widehat{\pi_1 M_2}$, by an isomorphism inducing an isomorphism of peripheral systems.
\item $M_1$ is a surface bundle with periodic monodromy $\phi$, and $M_2$ is a bundle over the same surface with monodromy $\phi^k$, where $k$ is coprime to the order of $\phi$.
\end{itemize}
\end{intro}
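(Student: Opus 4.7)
The plan is to recover the Seifert data of $M_i$ from the profinite isomorphism as far as possible, and then identify the residual ambiguity with the operation of raising a periodic monodromy to a coprime power. The forward implication will take the bulk of the work; the converse should follow by explicitly constructing an isomorphism of profinite completions compatible with peripheral systems from a given monodromy power relation.

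The first step is to identify the fiber subgroups. For a bounded \SFS{} $M$ the regular fiber generates a central cyclic subgroup $F\sbgp\pi_1 M$, and on each boundary torus the intersection with $F$ picks out a distinguished primitive slope. The hypothesis that $\widehat{\pi_1 M_1}\iso\widehat{\pi_1 M_2}$ preserves peripheral systems forces the fiber slopes on matched tori to be related by a common procyclic unit, and combined with centrality of the fiber in $\widehat{\pi_1 M_i}$ this should align $\widehat{F}_1$ with $\widehat{F}_2$ as closed normal subgroups. Quotienting, the isomorphism descends to $\widehat{\ofg(B_1)}\iso\widehat{\ofg(B_2)}$, still preserving peripheral circles. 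Invoking the profinite rigidity of bounded $2$-orbifolds established earlier in the paper then yields a homeomorphism $B_1\iso B_2$.

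With a common base orbifold, the residual Seifert data at each cone point of order $\alpha_j$ is a rotation number $\beta_j/\alpha_j\in\Q/\Z$ with $\gcd(\beta_j,\alpha_j)=1$; in the bounded case the Euler number is not an independent invariant, as it can be absorbed into boundary-torus slopes. Localising at the $j$-th singular fiber and normalising on the fiber subgroup by a unit $u\in\widehat{\Z}^\times$, the isomorphism should impose $\beta_j^{(2)}\equiv u\beta_j^{(1)}\pmod{\alpha_j}$, so that $u$ determines an element $k_j\in(\Z/\alpha_j)^\times$ at each cone point. Setting $n=\mathrm{lcm}(\alpha_j)$ and appealing to the Chinese remainder theorem, these $k_j$ assemble into a single $k\in(\Z/n)^\times$. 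This arithmetic will then be shown to characterise $M_1$ as a surface bundle with periodic monodromy $\phi$ of order $n$, under which $\beta_j\mapsto k\beta_j$ realises the replacement of $\phi$ by $\phi^k$. In the degenerate case where no nontrivial such $k$ is allowed, the two manifolds are homeomorphic.

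The main obstacle will be the first step: rigidly identifying the fiber subgroup from profinite data together with the peripheral system, since normal subgroups of $\pi_1 M_i$ need not be preserved by arbitrary continuous automorphisms of $\widehat{\pi_1 M_i}$. This will require exploiting centrality of $F_i$ (via continuity of the commutator map) alongside peripheral preservation, and distinguishing the fiber from any other central-like directions that might appear in small base cases such as Seifert fibrations over discs with few cone points. A further subtlety is the translation between topological Seifert data and its image in the profinite completion, and verifying that ``periodic bundle structure'' corresponds precisely to the arithmetic condition on fibre invariants emerging from the congruences above.
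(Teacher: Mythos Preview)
Your outline is close to the paper's --- identify the fibre, deduce a common base orbifold, then compare the residual Seifert data --- but the step where you ``localise at the $j$-th singular fibre'' and read off $\beta_j^{(2)}\equiv u\beta_j^{(1)}\pmod{\alpha_j}$ from the fibre-rescaling unit $u$ alone has a real gap. The isomorphism $\Phi$ induces an automorphism $\phi$ of $\hat B$ which, by the torsion-conjugacy results, sends each cone-point generator $a_j$ to a conjugate of $a_j^{k_j}$ for some $k_j$ coprime to $\alpha_j$. Lifting the relation $a_j^{\alpha_j}h^{\beta_j}=1$ through $\Phi$ actually yields $k_j\,\beta_j^{(2)}\equiv u\,\beta_j^{(1)}\pmod{\alpha_j}$, so both the fibre unit $u$ \emph{and} the base exponent $k_j$ enter. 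These $k_j$ are \emph{a priori} independent, and the Chinese remainder theorem cannot merge them into a single $k$ when the $\alpha_j$ share common factors --- two cone points of the same order already break the argument. Note that the coprime case is exactly the uninteresting one: there the manifold is already profinitely rigid.

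The paper supplies the missing global constraint. With boundary, one writes the ``extra'' boundary component as $b_0=a_1\cdots a_r\,b_1\cdots b_s\,[u_1,v_1]\cdots[u_g,v_g]$; peripheral preservation forces $\phi(b_0)$ to be conjugate to $b_0^{l_0}$, and reading off the image of $[b_0]$ in the abelianisation $H_1(\hat B)=\bigoplus\Z/\alpha_j\oplus\hat\Z^{\,s+2g}$ gives $k_j\equiv l_0\pmod{\alpha_j}$ for every $j$ simultaneously (and all the boundary exponents $l_j$ equal). This is what produces a single $k$. The paper then packages the comparison of Seifert data as the action of $\phi$ on $H^2(\hat B;\Z/\!\prod\alpha_j)$, computed via an explicit partial free resolution of $\hat\Z$ over $\hat\Z[[\hat B]]$, and shows this action is multiplication by that common value. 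Your localisation picture does not see $b_0$, and without it the exponents need not cohere.

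For the converse the paper does not write down a direct isomorphism $\widehat{\pi_1 M_1}\to\widehat{\pi_1 M_2}$. Instead it lifts $k$ to $\kappa\in\hat\Z{}^{\!\times}$, notes that multiplication by $\kappa$ on the coefficient group carries the class of $M_1$ to that of $M_2$ in $H^2(\hat B;\Z/t)$ for each $t=n\prod\alpha_j$, and then takes the inverse limit of the resulting isomorphisms between the finite-centre quotients $\Gamma_{i,n}=\widehat{\pi_1 M_i}/\overline{\langle h^t\rangle}$.
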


The author would like to thank Marc Lackenby for suggesting this field of study and for many enlightening conversations during the development of this theorem.  The author was supported by the EPSRC and by a Lamb and Flag Scholarship from St John's College, Oxford.

\begin{cnv}
In this document, we will use the following conventions:
\begin{itemize}
\item all manifolds and orbifolds will be assumed compact, connected and without boundary unless otherwise stated; all 3-manifolds will be orientable;
\item abstract groups will be assumed finitely presented and will be denoted with Roman letters $G, H, ...$; they will be assumed to have the discrete topology.
\item profinite groups will be assumed topologically finitely generated and will be denoted with capital Greek letters $\Gamma, \Delta, ...$
\item the symbols \nsgp[f], \nsgp[o], \nsgp[{\mathnormal p}] will denote `normal subgroup of finite index', `open normal subgroup', `normal subgroup of index a power of $p$' respectively; similar symbols will be used for not necessarily normal subgroups.
\item there is a divergence in notation between profinite group theorists, who use $\Z[p]$ to denote the $p$-adic integers, and manifold theorists for whom $\Z[p]$ is usually the cyclic group of order $p$. To avoid any doubt, the cyclic group of order $p$ will be consistently denoted $\Z/p$ or $\Z/p\Z$.
\end{itemize}
\end{cnv}

\section{Preliminaries}
\subsection{Inverse limits}
We recall some definitions and facts about inverse limits for use later.
\begin{defn}
Let $I$ be a partially ordered set such that for any $i,j\in I$ there is some $k$ larger than both. An {\em inverse system of groups} $A_i$ is a collection of groups together with maps $\phi_{ji}:A_j\to A_i$ whenever $j\geq i$, such that $\phi_{ii}={\rm id}$ and $\phi_{ji}\circ \phi_{kj}=\phi_{ki}$ whenever $k\geq j\geq i$. The {\em inverse limit} of this system is the group
\[ \varprojlim_{i\in I} A_i = \{(g_i)\in \prod A_i\text{ such that } \phi_{ji}(g_j)=g_i \text{ for all }j\geq i\}\]
\end{defn}
In category-theoretic terms, we have a functor from the poset category $I$ (where there is an arrow from $i$ to $j$ precisely when $i\geq j$) to the category of groups, and we take the limit of this functor; that is, an object $\varprojlim A_i$ equipped with maps $\psi_i$ to each $A_i$ such that $\phi_{ij}\circ \psi_i=\psi_j$ and which is universal with this property.

We can similarly define {\em direct limits} to be colimits of contravariant functors from $I$ to the category of groups.

The category to which we map is of course unimportant for this definition, and the explicit definition above suffices for most concrete categories such as {\sf Sets, Groups, Rings} etc. The following results will be of use later:
\begin{lem}
Let $A_i$ be an inverse system of non-empty finite sets, such that each transition map $\phi_{ji}$ is surjective. Then $\varprojlim A_i$ is non-empty.
\end{lem}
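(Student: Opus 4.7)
The plan is to exhibit $\varprojlim A_i$ as the intersection of a family of closed sets in a compact space and verify the finite intersection property. First I would topologise each $A_i$ with the discrete topology, so that each $A_i$ is a compact Hausdorff space, and form the product space $P = \prod_{i\in I} A_i$. By Tychonoff's theorem, $P$ is compact.

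Next I would write $\varprojlim A_i$ as a closed subset of $P$. For every pair $j\geq i$ in $I$, set
\[ X_{ji} = \{(g_k)_{k\in I}\in P : \phi_{ji}(g_j) = g_i\}. \]
Since the two coordinate projections $P\to A_j$ and $P\to A_i$ are continuous and $A_i$ is discrete Hausdorff, $X_{ji}$ is closed in $P$. By definition $\varprojlim A_i = \bigcap_{j\geq i} X_{ji}$, so it suffices to show this intersection is non-empty.

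The heart of the argument is the finite intersection property for the family $\{X_{ji}\}$. Given finitely many pairs $(j_1,i_1),\dots,(j_n,i_n)$ with $j_\alpha\geq i_\alpha$, use the directedness of $I$ to pick some $m\in I$ with $m\geq j_\alpha$ (and hence $m\geq i_\alpha$) for every $\alpha$. Because each $A_k$ is non-empty, pick any $x\in A_m$ and define a tuple $(g_k)\in P$ by setting $g_{j_\alpha} = \phi_{m,j_\alpha}(x)$ and $g_{i_\alpha} = \phi_{m,i_\alpha}(x)$ for each $\alpha$, and choosing arbitrary values in the remaining coordinates. The functoriality identity $\phi_{j_\alpha,i_\alpha}\circ\phi_{m,j_\alpha} = \phi_{m,i_\alpha}$ then shows this tuple lies in $\bigcap_{\alpha} X_{j_\alpha i_\alpha}$. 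Hence the family has the finite intersection property, and compactness of $P$ forces $\bigcap_{j\geq i} X_{ji}\neq\emptyset$, proving the lemma.

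The only subtle point is the bookkeeping in the finite intersection step, and I note in passing that surjectivity of the transition maps is actually not needed for non-emptiness; it is the stronger (and genuinely useful) fact that each projection $\varprojlim A_i\to A_k$ is surjective which requires the surjectivity hypothesis, and that refinement can be obtained by the same compactness argument applied to the subsystem of preimages of a chosen element.
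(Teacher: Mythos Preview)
Your proof is correct and follows exactly the approach the paper indicates: the paper does not give details, merely remarking that the lemma ``is in fact a special case of the same result for compact topological spaces, and is essentially equivalent to the finite intersection property,'' and you have supplied precisely that compactness-plus-finite-intersection argument. Your observation that surjectivity is not actually needed for mere non-emptiness is also correct and worth noting.
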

This is in fact a special case of the same result for compact topological spaces, and is essentially equivalent to the finite intersection property. A similar argument proves:
\begin{prop}\label{limitnonzero}
Let $A_i$ be an inverse system of abelian groups. Then $\varprojlim A_i=0$ if and only if for all $i\in I$ there is some $j\geq i$ such that $\phi_{ji}$ is the zero map.
\begin{proof}
The `if' direction is trivial. For the other direction, topologise $\prod A_i$ as the product of discrete sets, and suppose there exists $i\in I$ such that no $\phi_{ji}$ is trivial. For each finite subset $J\subseteq I$, set 
\[ U_J = \{(g_i)\in \prod A_i\text{ such that } \phi_{ji}(g_j)\neq 0 \text{ for all }j\geq i, j\in J\}\]
Each $U_J$ is an open subset of $\prod A_i$. There is some $k$ larger than every element of $J$, and $A_k\to A_i$ is non-trivial; hence, taking some $g_k\in A_k$ not mapping to the zero of $A_i$ and projecting to each $A_j$, we find that $U_J$ is non-empty. A finite intersection of these $U_J$ is again of this form, hence is non-empty. By compactness we conclude that the intersection of all the $U_J$ is non-empty; this intersection meets $\varprojlim A_i$ non-trivially, in those elements of $\varprojlim A_i$ not mapping to zero in $A_i$. Hence $\varprojlim A_i\neq 0$.
\end{proof}
\end{prop}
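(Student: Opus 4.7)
The ``if'' direction is immediate: if every $i\in I$ admits some $j\geq i$ with $\phi_{ji}=0$, then any $(g_i)\in\varprojlim A_i$ satisfies $g_i=\phi_{ji}(g_j)=0$, so the inverse limit vanishes. I would prove the converse in contrapositive form, by a compactness argument in the spirit of the preceding lemma.

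So suppose there is some fixed $i_0\in I$ such that $\phi_{ji_0}\neq 0$ for every $j\geq i_0$; the aim is to produce an element of $\varprojlim A_i$ whose $i_0$-coordinate is non-zero. I would equip each $A_i$ with the discrete topology and $\prod_i A_i$ with the product topology. This approach implicitly requires the $A_i$ to be finite -- without that hypothesis the statement in fact fails, as witnessed by $A_n=\Z$ with $\phi_{n+1,n}$ multiplication by $2$, where every transition map is non-zero yet the limit is trivial. Granted finiteness, Tychonoff's theorem makes $\prod A_i$ compact, and $\varprojlim A_i$ sits inside as a closed subspace.

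For each finite subset $J\subseteq I$ containing $i_0$, I would define $V_J\subseteq\prod A_i$ to be the set of tuples $(g_i)$ satisfying the compatibility relations $\phi_{j_1 j_2}(g_{j_1})=g_{j_2}$ for all $j_1\geq j_2$ in $J$, together with $g_{i_0}\neq 0$. Each $V_J$ is clopen, being determined by conditions on finitely many coordinates. Non-emptiness follows by taking $k\in I$ above every element of $J$, choosing $g_k\in A_k$ with $\phi_{ki_0}(g_k)\neq 0$ (possible by the hypothesis on $i_0$), and setting $g_j:=\phi_{kj}(g_k)$ for $j\in J$; the cocycle identity $\phi_{j_1 j_2}\circ \phi_{kj_1}=\phi_{kj_2}$ delivers all required compatibilities in $J$ and forces $g_{i_0}\neq 0$. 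Since $V_{J_1\cup J_2}\subseteq V_{J_1}\cap V_{J_2}$, the family $\{V_J\}$ has the finite intersection property, and compactness yields some $(g_i)\in\bigcap_J V_J$. Any such element satisfies the compatibility relation on every comparable pair $j_1\geq j_2$ (each of which sits in a finite $J$), so lies in $\varprojlim A_i$, with $g_{i_0}\neq 0$ as required.

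The only real subtlety is packaging the compatibility constraints and the non-vanishing condition into a single family of clopen sets having the finite intersection property, so that compactness genuinely delivers an element of the inverse limit rather than merely an element of the product with non-vanishing coordinates. Once the sets $V_J$ are defined to carry both pieces of data simultaneously, no further difficulty arises.
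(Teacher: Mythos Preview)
Your argument is correct and is essentially the paper's compactness/finite-intersection-property argument, with one organizational improvement: your sets $V_J$ package the compatibility relations together with the non-vanishing condition, so that $\bigcap_J V_J$ lands directly inside the inverse limit, whereas the paper's sets $U_J$ carry only the non-vanishing data and the final passage to $\varprojlim A_i$ is left somewhat implicit. Your explicit observation that finiteness of the $A_i$ is required---together with the counterexample $A_n=\Z$ with multiplication by $2$---flags a hypothesis that the paper's statement omits but tacitly uses (and which is present in the paper's actual applications).
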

There is a dual result for direct limits, that a direct limit of finite abelian groups is zero if and only if some map out of each group is trivial.

Since the limit is intuitively determined by the `long term behaviour' of the system, we expect some process analogous to taking a subsequence. The correct notion is as follows.
\begin{defn}
Given a poset $I$ such that for any $i,j\in I$ there is some $k$ larger than both, a  subset $J$ of $I$ is {\em cofinal} if for every $i\in I$ there is $j\in J$ such that $j\geq i$.
\end{defn}
\begin{prop}
Let $A_i$ be an inverse system of groups indexed over $I$, and let $J$ be a cofinal subset. Then
\[ \varprojlim_{i\in I} A_i = \varprojlim_{j\in J} A_j\]
\end{prop}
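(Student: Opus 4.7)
The plan is to exhibit an explicit isomorphism between the two inverse limits by restriction, and invert it by extension via cofinality. There is an obvious forgetful map
\[ \rho : \varprojlim_{i\in I} A_i \longrightarrow \varprojlim_{j\in J} A_j, \qquad (g_i)_{i\in I}\longmapsto (g_j)_{j\in J}, \]
which is a group homomorphism because the compatibility conditions on the right are a subset of those on the left. The strategy is to show $\rho$ is a bijection; since it is visibly a homomorphism, this suffices.

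For injectivity, suppose $(g_i)$ and $(g_i')$ are two elements of $\varprojlim_{i\in I} A_i$ that agree on $J$. Given any $i\in I$, use cofinality to pick $j\in J$ with $j\geq i$; then $g_i=\phi_{ji}(g_j)=\phi_{ji}(g_j')=g_i'$, so the two families coincide.

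For surjectivity, take any compatible family $(g_j)_{j\in J}$. For each $i\in I$, choose $j\in J$ with $j\geq i$ and define $\tilde g_i=\phi_{ji}(g_j)$. The main thing to check is that this is independent of the choice of $j$: if $j_1,j_2\in J$ both exceed $i$, pick $k\in I$ above both $j_1$ and $j_2$, and then use cofinality again to find $j_3\in J$ with $j_3\geq k$. The compatibility of $(g_j)_{j\in J}$ gives $g_{j_1}=\phi_{j_3 j_1}(g_{j_3})$ and similarly for $j_2$, so
\[ \phi_{j_1 i}(g_{j_1}) = \phi_{j_3 i}(g_{j_3}) = \phi_{j_2 i}(g_{j_2}), \]
as required. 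One then checks compatibility of $(\tilde g_i)_{i\in I}$: for $i_1\geq i_2$ in $I$, choose $j\in J$ with $j\geq i_1$ and compute $\phi_{i_1 i_2}(\tilde g_{i_1})=\phi_{i_1 i_2}\phi_{j i_1}(g_j)=\phi_{j i_2}(g_j)=\tilde g_{i_2}$. Finally, for $j\in J$ one may take $j$ itself as the auxiliary index, giving $\tilde g_j = g_j$, so $\rho(\tilde g)=g$.

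There is no serious obstacle here; the only subtle point is the well-definedness of the extension, which is exactly where cofinality (applied twice, once to find an upper bound in $I$ and then to lift it into $J$) is used. The proof works equally well in any concrete category admitting limits, since only set-level manipulations appear; alternatively, one can phrase the whole argument through the universal property, noting that the family $\{\psi_j:\varprojlim_{i\in I}A_i\to A_j\}_{j\in J}$ is a cone over the $J$-subsystem, and that the extension constructed above verifies the required universal property for $\varprojlim_{j\in J} A_j$.
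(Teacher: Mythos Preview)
Your proof is correct and is the standard argument for this well-known fact. The paper itself does not supply a proof of this proposition --- it is stated without proof in the preliminaries as a routine background result --- so there is no approach to compare against; your explicit restriction-and-extension argument (with the two-step use of cofinality to establish well-definedness) is exactly what one would expect.
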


\subsection{Profinite completions}
The central motivating question of this subject was `how much information is  contained in the finite quotients of a group?'. In this section we discuss how to package that information into a profinite group. Proofs for most of the statements can be found in \cite{ribeszalesskii} or \cite{analyticprop}.
\begin{defn}
Given a suitable class $\cal C$ of finite groups, and a (discrete) group $G$, the \emph{pro-$\cal C$ completion} of $G$ is the inverse limit of the system of groups \[\{G/N\,|\,N\nsgp[f] G,\, G/N\in {\cal C}\}\]
This completion is denoted $\hat{G}_{\cal C}$.

If $\cal C$ is the collection of all finite groups, we simply write $\hat{G}$ for the profinite completion of $G$. If $\cal C$ is the collection of finite $p$-groups, we write $\hat{G}_{(p)}$ and call it the pro-$p$ completion. 

Note that, by the categorical definition of inverse limits, there is a unique natural map $G\to\hat{G}$ induced by the quotient maps $G\to G/N$.

\begin{defn}
A property $\cal P$ of some class of finitely generated groups is said to be a {\em profinite invariant} if, given such finitely generated groups $G_1, G_2$ with isomorphic profinite completions, $G_1$ has $\cal P$ if and only if $G_2$ does.
\end{defn}
\begin{defn}\label{profrigdef}
An (orientable) 3-manifold is {\em profinitely rigid} if the profinite completion distinguishes its fundamental group from all other fundamental groups of (orientable) 3-manifolds.
\end{defn} 
\begin{defn}
A group $G$ is {\em residually finite} if for all $g\in G$ there is a homomorphism $G\to F$ with $F$ finite and the image of $g$ nontrivial.
\end{defn}
Examples of residually finite groups include surface groups (see for example \cite{hempel72}) and all 3-manifold groups (proved by Hempel \cite{hempel87} in the Haken case, which can be extended to all cases using geometrization).
\end{defn}
\begin{prop}
Let $G$ be a (finitely generated) group and $\iota:G\to\hat{G}$ the natural map. Then:
\begin{itemize}
\item the image of $\iota$ is dense;
\item $\iota$ is injective if and only if $G$ is residually finite; and
\item $\iota$ is an isomorphism if and only if $G$ is finite.
\end{itemize}
\end{prop}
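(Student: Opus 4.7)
The plan is to attack the three bullets in order, each essentially unwinding the construction of $\hat G$ as an inverse limit inside the product $\prod_{N\nsgp[f] G} G/N$ equipped with the product of discrete topologies. For the density bullet, I would describe the basic open sets of $\hat G$ concretely: a basic neighbourhood of $x=(x_N)\in\hat G$ is obtained by prescribing the coordinates at finitely many $N_1,\dots,N_k$, but replacing these by their intersection $N=\bigcap N_i$ (which is still finite-index and normal) and invoking the compatibility conditions reduces matters to a single-coordinate condition $\{y:y_N=x_N\}$. Lifting $x_N\in G/N$ to any $g\in G$ then places $\iota(g)$ inside this neighbourhood, so $\iota(G)$ is dense.

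For injectivity, the plan is to read off $\ker\iota=\bigcap\{N:N\nsgp[f] G\}$ directly from the definition, since $\iota(g)$ records the tuple of cosets $gN$. Residual finiteness is precisely the statement that this intersection is trivial: if $g\ne1$, residual finiteness yields a map to a finite group with nontrivial image on $g$, whose kernel is a finite-index normal subgroup avoiding $g$, and conversely the quotient by any such subgroup is a finite group separating $g$ from the identity.

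The third bullet is where I expect the only genuine work. One direction is immediate: if $G$ is finite, then $\{1\}\nsgp[f] G$ and $\{\{1\}\}$ is cofinal in the poset of finite-index normal subgroups, so by the cofinality proposition proved earlier the inverse system collapses and $\hat G=G/\{1\}=G$. For the converse, my plan is to combine two cardinality facts: a finitely generated group is countable (finite words in finitely many generators), while an infinite profinite group must be uncountable. The latter is the main technical ingredient, and the place to be careful; I would derive it via Baire category on the compact Hausdorff space $\hat G$ --- a countable compact Hausdorff space must have an isolated point, and homogeneity of the group topology then promotes this into a discrete topology on all of $\hat G$, forcing finiteness by compactness. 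With uncountability of infinite profinite groups in hand, $\iota$ cannot be surjective when $G$ is infinite, completing the equivalence.
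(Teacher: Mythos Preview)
The paper does not actually prove this proposition; it is stated as a standard fact, with the surrounding section referring the reader to \cite{ribeszalesskii} or \cite{analyticprop} for proofs. So there is no ``paper's own proof'' to compare against, and your outline stands or falls on its own merits.

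Your arguments for density and for the characterisation of injectivity are the standard ones and are fine as written. The third bullet is also essentially correct, and the Baire-category route to ``an infinite profinite group is uncountable'' is a clean way to do it. There is, however, a small slip in your final sentence: you conclude that ``$\iota$ cannot be surjective when $G$ is infinite'', but this is not literally true. If $G$ is infinite yet has only finitely many finite quotients (for instance a finitely presented infinite group with no nontrivial finite quotients, such as Higman's group), then $\hat G$ is finite and $\iota$ \emph{is} surjective---density in a discrete space is the whole space---while failing to be injective. The fix is immediate: either argue that $\iota$ cannot be \emph{bijective} when $G$ is infinite (since $|G|=|\hat G|$ would force the countable set $G$ to match a set that is either finite or uncountable), or first invoke your second bullet to say that an isomorphism $\iota$ forces $G$ to be residually finite, whence $\hat G$ is infinite whenever $G$ is, and then run the cardinality argument. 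Either phrasing closes the gap.
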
 
The following proposition gives explicitly the strong links connecting the subgroup structures of groups with isomorphic profinite completions.
\begin{prop}
Let $G_1, G_2$ be finitely generated residually finite groups, and suppose $\phi:\hat{G}_1\to \hat{G}_2$ is an isomorphism of their profinite completions. Then there is an induced bijection $\psi$ between the set of finite index subgroups of $G_1$ and the set of finite index subgroups of $G_2$, such that if $K\sbgp[f] H\sbgp[f] G_1$, then:
\begin{itemize}
\item $[H\,:\,K] = [\psi(H)\,:\,\psi(K)]$;
\item $K\nsgp[]H$ if and only if $\psi(K)\nsgp[]\psi(H)$;
\item if $K\nsgp[]H$, then $H/K\iso\psi(H)/\psi(K)$; and
\item $\hat{H}\iso\widehat{\psi(H)}$.
\end{itemize}
\end{prop}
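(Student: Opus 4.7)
The plan is to reduce the entire statement to the fundamental correspondence between finite-index subgroups of a finitely generated residually finite group $G$ and open subgroups of $\hat G$, via closure and intersection with $G$. Once this correspondence is in place, the (continuous) isomorphism $\phi$ automatically transports open subgroups of $\hat{G}_1$ to those of $\hat{G}_2$ while preserving index, inclusion, normality and quotients, so $\psi$ may be defined by combining this action of $\phi$ with the correspondence on each side; the four listed properties then follow by unpacking the corresponding topological statements.

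First I would set up the correspondence, identifying each $G_i$ with its dense image in $\hat{G}_i$ using residual finiteness, and defining $\alpha(H)=\overline H$ for $H\sbgp[f] G$ and $\beta(U)=U\cap G$ for open $U\leq\hat G$. Because every open subgroup is also closed, its cosets are clopen, so each meets the dense set $G$; hence $\beta(U)\sbgp[f] G$ with $[G:\beta(U)]=[\hat G:U]$. For the other direction, given $H\sbgp[f] G$, pass to the normal core $H_0=\bigcap_{g\in G}gHg^{-1}$, which is finite-index normal in $G$; the map $G\twoheadrightarrow G/H_0$ extends to a continuous surjection $q\colon\hat G\twoheadrightarrow G/H_0$ with open kernel, and $V=q^{-1}(H/H_0)$ is an open subgroup of $\hat G$ of index $[G:H]$ satisfying $V\cap G=H$. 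Since any open neighbourhood within $V$ of a point $v\in V$ meets the dense set $G$ and hence meets $H=V\cap G$, one has $V=\overline H$, so $\alpha$ and $\beta$ are mutually inverse index-preserving bijections.

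Checking the remaining properties is then essentially routine. If $K\nsgp H$ in $G$, the set $\{\gamma\in\hat G : \gamma\overline K\gamma^{-1}\subseteq\overline K\}$ is closed by continuity of conjugation and contains $H$, hence contains $\overline H$, giving $\overline K\nsgp\overline H$; conversely $\overline K\nsgp\overline H$ gives $K\nsgp H$ by intersecting with $G$. When $K\nsgp H$, the composition $H\hookrightarrow\overline H\twoheadrightarrow\overline H/\overline K$ lands in a finite discrete group with dense image, so is surjective, and its kernel is $H\cap\overline K=K$; thus $H/K\iso\overline H/\overline K$. The dense inclusion $H\hookrightarrow\overline H$ also induces a continuous surjection $\hat H\twoheadrightarrow\overline H$ by the universal property of profinite completion.

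The main subtlety lies in showing this induced map $\hat H\to\overline H$ is injective, equivalently that the subspace topology on $H$ from $\overline H$ is the full profinite topology of $H$. This is precisely where the hypothesis $H\sbgp[f] G$ is essential: any $K\sbgp[f] H$ is automatically finite-index in $G$, so $\overline K$ is open in $\hat G$; hence $\overline K\cap\overline H$ is open in $\overline H$ and cuts $H$ in exactly $K$, showing every finite-index subgroup of $H$ is induced from an open subgroup of $\overline H$. This gives $\hat H\iso\overline H$. Setting $\psi(H):=\beta_2(\phi(\alpha_1(H)))$ finally assembles all four listed properties into the required bijection.
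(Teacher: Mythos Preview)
Your proposal is correct and follows essentially the same approach as the paper: the paper's proof is simply the one-line observation that the statement follows immediately from the standard correspondence (stated as the next proposition, without proof) between finite-index subgroups of $G$ and open subgroups of $\hat G$ via $H\mapsto\overline H$. You have spelled out the proof of that correspondence in detail and then defined $\psi$ exactly as the paper implicitly does, by transporting along $\phi$.
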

This follows immediately from the following proposition, which relates the subgroup structure of a group to that of its profinite completion:
\begin{prop}\label{subgroupcorr}
Let $G$ be a finitely generated residually finite group, and $\hat G$ its profinite completion. Identify $G$ with its image under the canonical inclusion $G\hookrightarrow \hat G$. Let $\psi$ be the mapping sending a finite index subgroup $H\sbgp[f] G$ to its closure $\bar{H}$. If $K\sbgp[f] H\sbgp[f] G$ then:
\begin{enumerate}
\item $\psi : \{H\sbgp[f] G\}\to \{U\sbgp[o] \hat G\}$ is a bijection; 
\item $[H\,:\,K] = [\bar{H}\,:\,\bar{K}]$;
\item $K\nsgp[]H$ if and only if $\bar{K}\nsgp[]\bar{H}$;
\item if $K\nsgp[f]H$, then $H/K\iso\bar{H}/\bar{K}$; and
\item $\hat{H}\iso\bar{H}$.
\end{enumerate}
\end{prop}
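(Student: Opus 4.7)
The plan is to prove the five assertions in order, with (1) being the key from which the rest will follow without much trouble. For (1), my approach will be to produce, for each $H\sbgp[f] G$, a concrete open subgroup of $\hat G$ that I can then identify with $\bar H$. I would let $N$ be the normal core of $H$ in $G$, note that $G/N$ is finite, and extend the quotient $G\to G/N$ to a continuous homomorphism $\rho\colon\hat G\to G/N$. The set $K:=\rho^{-1}(H/N)$ is then clopen in $\hat G$, and $K\cap G=H$ by construction. Since $K$ is closed and contains $H$, we have $\bar H\subseteq K$. Conversely, $K\cap G=H$ is dense in $K$ because $G$ is dense in $\hat G$ and $K$ is open; hence $K\subseteq\overline{H}=\bar H$. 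So $\bar H=K$ is open with $\bar H\cap G=H$. Writing $\psi(H)=\bar H$ and $\phi(U)=U\cap G$, the identity $\phi\psi=\id$ is then immediate, and the reverse composition $\psi\phi(U)=\overline{U\cap G}$ equals $U$ by the same density argument. That $\phi(U)$ has finite index in $G$ follows since the natural map $G/(U\cap G)\to\hat G/U$ is an injection of finite sets, so $\phi$ does land in finite-index subgroups.

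For (2), I would choose coset representatives $H=h_1 K\sqcup\cdots\sqcup h_n K$ and take closures, obtaining $\bar H=\bigcup h_i\bar K$; these cosets are pairwise disjoint because $h_i^{-1}h_j\in\bar K\cap G=K$ by (1) would force $i=j$. Part (3) will follow from the density of $H$ in $\bar H$: forwards, the normaliser of the closed subgroup $\bar K$ in $\hat G$ is closed and contains $H$, hence contains $\bar H$; backwards, for $h\in H$ one computes $hKh^{-1}=(h\bar K h^{-1})\cap G=\bar K\cap G=K$. Part (4) is then immediate: restricting $\bar H\to\bar H/\bar K$ to $H$ has kernel $H\cap\bar K=K$ by (1), inducing an injection $H/K\hookrightarrow\bar H/\bar K$ that is a bijection by the index equality in (2).

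For (5), the inclusion $H\hookrightarrow\bar H$ into the profinite group $\bar H$ induces, via the universal property, a continuous homomorphism $\eta\colon\hat H\to\bar H$. It is surjective because $\eta(\hat H)$ is compact, hence closed in $\bar H$, and contains the dense subgroup $H$. For injectivity I would note that for each $M\nsgp[f] H$, part (4) identifies $\bar H/\bar M$ with $H/M$; if $\eta(\alpha)=1$ then $\alpha$ maps to the identity in every $H/M$, so $\alpha=1$ in $\hat H=\varprojlim_M H/M$.

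The main obstacle is (1), as the other parts are formal consequences. The essential inputs are the density of $G$ in $\hat G$, the fact that open subgroups of a profinite group are automatically clopen of finite index, and the trick of using the normal core of $H$ to realise $\bar H$ as a preimage under the natural map $\hat G\to G/N$.
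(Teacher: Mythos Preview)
The paper does not actually supply a proof of this proposition: it is stated as background, with the surrounding text pointing to \cite{ribeszalesskii} and \cite{analyticprop} for proofs. So there is no ``paper's own proof'' to compare against.

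Your argument is correct and is essentially the standard one found in those references. The key step is exactly as you identify it: realising $\bar H$ concretely as $\rho^{-1}(H/N)$ for the extension $\rho\colon\hat G\to G/N$ of the quotient by the normal core, which simultaneously shows $\bar H$ is open and that $\bar H\cap G=H$. Parts (2)--(4) then follow formally from the identity $\bar K\cap G=K$ and density, and your proof of (5) via the universal property and the identifications $\bar H/\bar M\cong H/M$ from (4) is the usual route. One small point worth making explicit in (5): the composite $\hat H\xrightarrow{\eta}\bar H\to\bar H/\bar M\cong H/M$ agrees with the canonical projection $\hat H\to H/M$ because both are continuous and agree on the dense subgroup $H$; you use this implicitly when concluding $\alpha=1$.
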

The open subgroups of $\hat G$ are in fact all its finite index subgroups, by the Nikolov-Segal theorem \cite{nikolovsegal}.

Questions concerning profinite completions are often na\"ively stated in terms of the `set of isomorphism classes of finite quotients' ${\cal C}(G)$. These formulations are in fact equivalent. 
\begin{thmquote}[Dixon, Formanek, Poland, Ribes \cite{dixon82}]
Let $G_1, G_2$ be finitely generated groups. If ${\cal C}(G_1)= {\cal C}(G_2)$, then $\hat G_1\iso \hat G_2$.
\end{thmquote}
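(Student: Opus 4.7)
The plan is to construct a continuous isomorphism $\hat{G}_2 \to \hat{G}_1$ in three parts: use a compactness argument to produce continuous surjections $\hat{G}_2 \twoheadrightarrow \hat{G}_1$ and $\hat{G}_1 \twoheadrightarrow \hat{G}_2$; compose them to obtain a surjective continuous endomorphism of $\hat{G}_2$; and invoke the Hopfian property of topologically finitely generated profinite groups to upgrade everything to an isomorphism.

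For the first part, for each $N \nsgp[f] G_1$ let $T_N$ be the set of surjective homomorphisms $G_2 \twoheadrightarrow G_1/N$. As $G_2$ is finitely generated and $G_1/N$ is finite, $T_N$ is a finite set, and as $G_1/N \in \mathcal{C}(G_1) = \mathcal{C}(G_2)$ it is non-empty. When $N \subseteq N'$, post-composition with the quotient $G_1/N \twoheadrightarrow G_1/N'$ gives a transition map $T_N \to T_{N'}$, assembling the $T_N$ into an inverse system of non-empty finite sets. Its inverse limit is non-empty by the compactness/finite intersection argument underlying the first lemma of Section 2.1 (after passing to stable images $\bigcap_{M \subseteq N} \mathrm{image}(T_M \to T_N)$, the transition maps become surjective). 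An element $(\phi_N) \in \varprojlim T_N$ is precisely a compatible family of surjections, equivalently a continuous homomorphism $\Phi: G_2 \to \hat{G}_1$, which extends uniquely to $\hat{\Phi}: \hat{G}_2 \to \hat{G}_1$; its image is both dense (since each $\phi_N$ is surjective) and closed (being compact inside a Hausdorff space), hence equals all of $\hat{G}_1$. The symmetric construction yields a continuous surjection $\hat{\Psi}: \hat{G}_1 \twoheadrightarrow \hat{G}_2$.

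The composition $\hat{\Psi}\hat{\Phi}$ is then a continuous surjective endomorphism of $\hat{G}_2$. Any such endomorphism $\phi$ of a topologically finitely generated profinite group $\Gamma$ is automatically injective: by surjectivity, the preimage map $U \mapsto \phi^{-1}(U)$ sends open normal subgroups to open normal subgroups of the same index and satisfies $\phi\phi^{-1}(U) = U$, hence is an injection, and so (the target being finite for each fixed index) a bijection on the set of open normal subgroups of each fixed index; any nontrivial element of $\ker\phi$ would then be forced to lie in every open normal subgroup, contradicting the Hausdorffness of $\Gamma$. Therefore $\hat{\Psi}\hat{\Phi}$ is an isomorphism, giving $\hat{\Phi}$ a left inverse and, combined with its surjectivity, making it an isomorphism as well. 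The main obstacle is the compactness step: the hypothesis $\mathcal{C}(G_1) = \mathcal{C}(G_2)$ only matches finite quotients individually, and the content of the argument is gluing these pointwise matchings into a coherent family compatible with all quotient maps. The Hopfian step is a clean formal device to promote mutual surjectivity to an isomorphism without constructing inverses explicitly.
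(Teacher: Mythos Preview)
The paper does not supply a proof of this statement; it is quoted as a known theorem of Dixon, Formanek, Poland and Ribes with a citation to \cite{dixon82}. Your argument is correct and is essentially the original one: obtain a continuous surjection $\hat G_2\twoheadrightarrow\hat G_1$ by taking an inverse limit of the non-empty finite sets of surjections $G_2\twoheadrightarrow G_1/N$, do the same in the other direction, and then use the Hopfian property of topologically finitely generated profinite groups (which $\hat G_1,\hat G_2$ are, since $G_1,G_2$ are finitely generated) to promote the pair of surjections to an isomorphism. The only point worth making explicit is that finiteness of the set of open normal subgroups of a given index, used in your Hopfian step, is exactly where topological finite generation enters.
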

\begin{clly}
If $G_1\iso G_2$, then $H_1(G_1;\Z)\iso H_1(G_2;\Z)$.
\end{clly}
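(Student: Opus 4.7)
The statement is an immediate corollary of the Dixon--Formanek--Poland--Ribes theorem, together with the observation that a finitely generated abelian group is determined up to isomorphism by its profinite completion; thus the (clearly intended) hypothesis is $\hat{G}_1 \iso \hat{G}_2$. The plan is to identify $H_1(G;\Z)$ with the abelianisation $G^{\rm ab} = G/[G,G]$, show that its profinite completion is read off from $\hat G$, and then recover $G^{\rm ab}$ itself from this profinite object using the finite generation of $G$.

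First I would note that since $G$ is finitely generated, so is $G^{\rm ab}$, and every finite quotient of $G^{\rm ab}$ pulls back to a finite abelian quotient of $G$ and vice versa. Consequently ${\cal C}(G_1^{\rm ab})$ is the subclass of abelian groups inside ${\cal C}(G_1)$. If $\hat G_1\iso \hat G_2$, then by the bijection of finite-index (normal) subgroups in Proposition~\ref{subgroupcorr} the classes ${\cal C}(G_1)$ and ${\cal C}(G_2)$ coincide, so the abelian subclasses coincide too, and hence ${\cal C}(G_1^{\rm ab})={\cal C}(G_2^{\rm ab})$. The Dixon--Formanek--Poland--Ribes theorem quoted above now yields $\widehat{G_1^{\rm ab}}\iso \widehat{G_2^{\rm ab}}$.

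The remaining step, and the only real content, is to recover a finitely generated abelian group from its profinite completion. Writing $G_i^{\rm ab}\iso \Z^{r_i}\oplus T_i$ with $T_i$ finite, one has
\[ \widehat{G_i^{\rm ab}} \iso \hat{\Z}^{r_i}\oplus T_i.\]
The torsion subgroup of $\widehat{G_i^{\rm ab}}$ is precisely $T_i$ (since $\hat\Z$ is torsion-free), and the quotient $\widehat{G_i^{\rm ab}}/T_i\iso \hat\Z^{r_i}$ determines $r_i$ as its $\Z/p$-dimension modulo $p$ for any prime $p$. Thus $r_1=r_2$ and $T_1\iso T_2$, whence $G_1^{\rm ab}\iso G_2^{\rm ab}$.

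The main (and only) obstacle is the last step, and it is mild: one just needs to argue that $\hat\Z$ has no torsion and that reducing modulo $p$ detects the free rank. Everything else is formal bookkeeping built on the subgroup correspondence already established in Proposition~\ref{subgroupcorr}.
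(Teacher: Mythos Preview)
Your argument is correct. The paper does not actually supply a proof for this corollary; it is stated immediately after the Dixon--Formanek--Poland--Ribes theorem and left to the reader. Your approach---passing from $\hat G_1\iso\hat G_2$ to equality of the sets of finite abelian quotients, hence to $\widehat{G_1^{\rm ab}}\iso\widehat{G_2^{\rm ab}}$, and then recovering a finitely generated abelian group from its profinite completion via the structure theorem---is exactly the intended one and is sound. One small remark: you cite Proposition~\ref{subgroupcorr} for the equality ${\cal C}(G_1)={\cal C}(G_2)$, but that proposition concerns a single group and its completion; the statement you need is the (easy) converse direction of the Dixon--Formanek--Poland--Ribes equivalence, namely that $\hat G$ determines ${\cal C}(G)$ because the continuous finite quotients of $\hat G$ coincide with the finite quotients of $G$. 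This is implicit in the paper's discussion and does not affect the validity of your proof.
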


It is frequently necessary to have information about subgroups not of finite index. In particular, if $H\leq G$, when is the closure $\bar H$ of $H$ in $\hat G$ isomorphic to the profinite completion of $H$?
\begin{defn}
Let $G$ be a group, $H\leq G$. We say $H$ is {\em separable in $G$}, or that $G$ is {\em $H$-separable} if, given $g\notin H$, there is a finite index subgroup $N$ of $G$ such that $g\notin N$, $H\subseteq N$. 
\end{defn}
This statement is in fact equivalent to `$H = G\cap \bar H$', i.e. $H$ is closed in the topology on $G$ induced from $\hat G$. Furthermore,
\begin{prop}
If $G$ is a group, $H\leq G$, and $G$ is $H_1$-separable for every $H_1\sbgp[f] H$, then $\hat H\iso \bar H$.
\end{prop}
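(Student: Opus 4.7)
The plan is to construct a natural continuous homomorphism $\phi \colon \hat H \to \bar H$ via the universal property of profinite completion, and then to verify that it is a bijection.

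Since $\bar H$ is a closed subgroup of the profinite group $\hat G$ it is itself profinite; it receives a homomorphism from $H$ via the inclusion $H \hookrightarrow \hat G$, so the universal property of profinite completion yields a unique continuous map $\phi \colon \hat H \to \bar H$. Surjectivity is immediate: the image $\phi(\hat H)$ contains $H$, which is dense in $\bar H$ by definition, and is compact (hence closed) as the continuous image of $\hat H$. For injectivity the key technical claim is that for every $N \nsgp[f] H$ there exists $M \nsgp[f] G$ with $M \cap H \leq N$. Granted this, any $x \in \ker \phi$ must vanish in $H/(M \cap H) \hookrightarrow G/M$ for every such $M$, hence also in the further quotient $H/N$ via the surjection $H/(M\cap H) \twoheadrightarrow H/N$; as this holds for every $N \nsgp[f] H$ and $\hat H = \varprojlim H/N$, we get $x = 1$.

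To prove the claim, I would pick coset representatives $g_1, \dots, g_k$ for $N$ in $H$ with $g_1 \in N$. For each $i \geq 2$ we have $g_i \notin N$, so the hypothesis that $G$ is $N$-separable produces $M_i \sbgp[f] G$ with $N \subseteq M_i$ and $g_i \notin M_i$. Setting $M$ to be the normal core in $G$ of the finite-index subgroup $\bigcap_{i=2}^k M_i$, we obtain $M \nsgp[f] G$ contained in each $M_i$. For any $g \in M \cap H$, writing $g = g_i n$ with $n \in N$ forces $i = 1$: otherwise both $g \in M \subseteq M_i$ and $n \in N \subseteq M_i$ would give the contradiction $g_i = gn^{-1} \in M_i$. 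Hence $M \cap H \leq N$.

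The main obstacle is conceptual rather than computational: recognising that the comparison of $\hat H$ with $\bar H$ reduces to comparing the subspace topology on $H$ induced from $\hat G$ with its full profinite topology. Once that reduction is made, the finiteness of $[H:N]$ is what allows the separability hypothesis to be invoked only finitely many times (one per coset representative), and a standard compact-Hausdorff argument then upgrades the continuous bijection $\phi$ to a topological group isomorphism.
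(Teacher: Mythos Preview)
Your argument is correct. The paper states this proposition without proof (it is a standard fact from the theory of profinite groups, implicit in the references \cite{ribeszalesskii} cited nearby), so there is nothing to compare against directly; your write-up supplies precisely the expected proof. The key step---showing that every $N\nsgp[f] H$ contains $M\cap H$ for some $M\nsgp[f] G$---is exactly the translation of the separability hypothesis into the statement that the profinite topology on $H$ induced from $\hat G$ coincides with its intrinsic profinite topology, and your coset-representative argument handles this cleanly. One small remark: in the surjectivity paragraph you could be slightly more explicit that $\phi(\hat H)\subseteq\bar H$ (density of $H$ in $\hat H$ plus continuity of $\phi$ gives $\phi(\hat H)\subseteq\overline{\phi(H)}=\bar H$), but this is routine and does not affect correctness.
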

\begin{defn}
A group $G$ is {\em LERF (locally extended residually finite)} if $G$ is $H$-separable for every finitely-generated subgroup $H$ if $G$. 
\end{defn}
Many 3-manifold groups are LERF.
\begin{thmquote}[\cite{scott78}]
The fundamental group of a compact \SFS{} is LERF.
\end{thmquote}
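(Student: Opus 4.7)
The plan is to use the Seifert fibration structure to reduce LERF-ness of $\pi_1(M)$ to LERF-ness of a 2-orbifold group, which can in turn be established from the geometry of $\mathbb{H}^2$. A closed \SFS{} $M$ admits a Seifert fibration $M\to B$ over a 2-orbifold $B$, producing a central extension
\[ 1 \to Z \to \pi_1(M) \to \ofg[(B)] \to 1, \]
where $Z$ is cyclic, generated by a regular fibre, and is infinite cyclic outside a handful of degenerate spherical cases which can be handled by hand since finite groups are trivially LERF.

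My first step would be to establish LERF for 2-orbifold groups. Every closed 2-orbifold has a finite-sheeted torsion-free surface cover by Selberg's lemma, and LERF is inherited by finite-index overgroups via a Marshall Hall-style coset-intersection argument, so it suffices to prove LERF for surface groups. Here I would adapt Scott's geometric argument: a finitely generated subgroup $H$ of a surface group $\Gamma=\pi_1(\Sigma)$ acts on $\mathbb{H}^2$ and the quotient $\mathbb{H}^2/H$ contains a compact core $C$ carrying all of $\pi_1$. Given $g\notin H$, one builds a finite cover $\Sigma'\to\Sigma$ into which $C$ lifts isometrically, but in which no loop freely homotopic to $g$ lifts; then $\pi_1(\Sigma')\le\Gamma$ is the required finite-index subgroup.

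Next I would transfer LERF across the central extension. Given a finitely generated $H\le\pi_1(M)$ and $g\notin H$, I would split on $H\cap Z$. When $H\cap Z$ has finite index in $Z$ then $H$ contains a power of the fibre and $HZ$ is close to the full preimage of the image of $H$ in $\ofg[(B)]$; pulling back a separating subgroup from LERF of the base, and intersecting with a large enough finite-index subgroup to separate $g$ from $H$ in the fibre direction, yields the required subgroup. When $H\cap Z=\{1\}$, $H$ injects into $\ofg[(B)]$; LERF downstairs separates the image of $H$ from that of $g$ unless these agree, in which case $g$ and $H$ differ only by an element of $Z$ and one uses that the cyclic subgroup $Z$ is itself separable in $\pi_1(M)$.

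The main obstacle will be the lifting step. It is subtle that a finite-index subgroup $K$ of $\ofg[(B)]$ containing the image of $H$ can be pulled back to a finite-index subgroup of $\pi_1(M)$ containing all of $H$ itself (not merely a single coset of $Z$) while still avoiding $g$. The centrality of $Z$ is essential here — one must arrange $H$ to be ``fibre-preserving in $K$'' in a suitably strong sense, with careful control over the Euler class of the extension and of how cyclic subgroups split modulo $Z$. The geometric construction of $\Sigma'$ in the surface case is also delicate: embedding a compact region while simultaneously excluding a prescribed element requires more than just residual finiteness, and is what gives the surface-group case its real content.
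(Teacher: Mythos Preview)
The paper does not prove this statement at all: it is quoted as a known external result from Scott's 1978 paper, with no accompanying proof. There is therefore no ``paper's own proof'' to compare your proposal against.

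That said, a brief comment on the proposal itself. Your outline tracks the shape of Scott's argument reasonably well for the surface-group step, but your step~3 --- lifting LERF through the extension by $Z$ --- is where the real content lies, and your case analysis does not close it. In the case $H\cap Z=\{1\}$ with $g\in HZ\smallsetminus H$, you say one ``uses that the cyclic subgroup $Z$ is itself separable''; but separability of $Z$ does not by itself yield separability of $H$. Writing $g=hz$ with $h\in H$ and $1\ne z\in Z$, you need a finite-index subgroup containing $H$ yet missing $z$, and there is no general mechanism producing this from LERF of the quotient plus separability of $Z$ --- indeed, LERF is \emph{not} preserved under arbitrary central extensions by $\Z$. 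Scott's actual proof uses additional geometric input specific to Seifert fibre spaces (working with the geometry of $\mathbb{H}^2\times\R$ and convex cores there, not just in $\mathbb{H}^2$), rather than a purely group-theoretic transfer argument. Your acknowledgement that ``the main obstacle will be the lifting step'' is correct, but the sketch you give does not surmount it.
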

\begin{thmquote}[Agol, Wise, Kahn, Markovic, and others]
The fundamental group of a compact hyperbolic 3-manifold is LERF.
\end{thmquote}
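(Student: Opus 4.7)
The strategy is to reduce LERF to separability of geometrically finite (equivalently, quasiconvex) subgroups, and then to deduce the latter from a virtual--specialness theorem for the ambient group.

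First, by the tameness theorem (Agol, Calegari--Gabai) together with Canary's covering theorem, any finitely generated subgroup $H \leq \pi_1 M$ is either geometrically finite in $\pi_1 M$ (hence quasiconvex with respect to the word metric) or is, up to finite index, a virtual fibre subgroup. Separability in the virtual fibre case can be arranged by passing to a finite-sheeted cover that fibres over the circle: the surface fibre itself is LERF by Scott's theorem, and the remaining subgroup structure is controlled by the monodromy together with separability of cyclic extensions. The heart of the argument is therefore the separability of quasiconvex subgroups.

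Second, to handle quasiconvex subgroups the plan is to exhibit $\pi_1 M$ as a \emph{virtually special} group, i.e.\ as having a finite-index subgroup which embeds in a right-angled Artin group. Once this is established, the Haglund--Wise theorem immediately gives that every quasiconvex subgroup of $\pi_1 M$ is separable, and the preceding reduction then upgrades this to LERF. Virtual specialness is produced in two further steps. One first \emph{cubulates} $\pi_1 M$, i.e.\ constructs a proper cocompact action on a CAT(0) cube complex; for closed $M$ this uses the Kahn--Markovic surface subgroup theorem to produce a rich family of closed quasi-Fuchsian surfaces, which Bergeron--Wise then feed into Sageev's construction to obtain the desired cube complex. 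For cusped $M$ one proceeds by constructing a quasiconvex hierarchy and appealing directly to Wise's hierarchy theorem. One then invokes Agol's theorem that every hyperbolic cubulated group is virtually special.

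The principal obstacle is precisely this cubulation-and-specialness step in the closed case: producing enough closed quasi-Fuchsian surfaces to separate all pairs of points at infinity (Kahn--Markovic), and then upgrading the resulting cubulation to virtual specialness (Agol's Virtual Haken / Virtual Specialness theorem). By contrast, the reduction to the geometrically finite case via tameness, and the deduction of separability from specialness via Haglund--Wise, are comparatively formal.
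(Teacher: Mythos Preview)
The paper does not prove this theorem at all: it is stated as a quoted result attributed to Agol, Wise, Kahn, Markovic, and others, with no argument given. It is background material, invoked only to motivate the surrounding discussion of LERF. So there is no proof in the paper to compare your proposal against.

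That said, your outline is an accurate summary of how the result is actually established in the literature, with one imprecision worth flagging. Your treatment of the virtual fibre case is muddled: invoking Scott's theorem that surface groups are LERF, and ``separability of cyclic extensions'', is beside the point. The clean argument is that if $H$ is a virtual fibre subgroup, then some finite-index subgroup $H_0\leq H$ is the fibre surface group of a finite cover $M'\to M$, hence normal in the finite-index subgroup $\pi_1 M'\leq \pi_1 M$; a normal subgroup of a finite-index subgroup is automatically separable, and $H$, being a finite union of $H_0$-cosets, is then closed in the profinite topology. No appeal to LERF of surface groups is needed. The remainder of your plan---the tameness dichotomy, cubulation via Kahn--Markovic and Bergeron--Wise (closed case) or Wise's quasiconvex hierarchy theorem (cusped case), Agol's virtual specialness theorem, and separability of quasiconvex subgroups via Haglund--Wise---is the standard route and is correctly sketched.
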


\section{Cohomology of profinite groups}\label{seccohom}
Profinite groups have a homology and cohomology theory sharing many features with that for discrete groups; see \cite{ribeszalesskii} or \cite{Serrecohom} for a more full treatment. We provide here only what results we need. One definition, analogous to that for discrete groups, is the following.
Take a profinite group $\Gamma$ and an abelian group $A$ on which $\Gamma$ acts continuously (i.e.\ a $\Gamma$-module). Then define cochain groups $C^n$ and coboundary maps $d:C^n\to C^{n+1}$ by:
\[ C^n(\Gamma,A) = \{\text{continuous functions } f:\Gamma^n\to A\}\]
\begin{align*}
(df)(g_1, \ldots, g_{n+1}) =\, & g_1 \cdot f(g_2,\ldots ,g_{n+1}) \\
& + \sum_{i=1}^{n} (-1)^i f(g_1,\ldots, g_i g_{i+1},\ldots, g_{n+1} )\\
& + (-1)^{n+1} f(g_1,\ldots, g_n)
\end{align*} 
This chain complex gives well-defined cohomology groups $H^n(\Gamma, A)$. The usual functoriality properties still hold, and a notion of cup product is still defined. There are other equivalent definitions. In particular, one can use projective resolutions of $\hat \Z$ and apply functors to compute the (co)homology. The above chain complex may be obtained in this way, and this viewpoint will be exploited later.

\subsection{Goodness and cohomological dimension}
As one might expect from the fact that profinite groups are determined by their finite quotients, this group cohomology often behaves better when $A$ is a finite module. In particular, Serre \cite{Serrecohom} made the following definition:
\begin{defn}
A finitely generated group $G$ is \emph{good} if for all finite $G$-modules $A$, the natural homomorphism 
\[ H^n(\hat{G}; A)\to H^n(G; A) \]
induced by $G\to \hat{G}$ is an isomorphism for all $n$.  
\end{defn}
For the key groups involved in our case, we have:
\begin{thmquote}[Grunewald, Jaikin-Zapirain, Zalesskii \cite{GJ_ZZ08}]
Finitely generated Fuchsian groups are good.
\end{thmquote}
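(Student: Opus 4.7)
The plan is to reduce to the cases of free and closed orientable surface groups, which can be handled directly, and then to bootstrap along a finite-index extension. Let $G$ be a finitely generated Fuchsian group. By Selberg's lemma $G$ has a torsion-free subgroup of finite index; passing to the intersection of its finitely many $G$-conjugates, we obtain $H \nsgp[f] G$ with $H$ torsion-free. As a finitely generated torsion-free discrete subgroup of $\mathrm{PSL}_2(\R)$, $H$ is then either a finitely generated free group (in the non-cocompact case) or the fundamental group of a closed orientable surface (in the cocompact case).

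Next I would verify that both of these classes are good. Free groups are good essentially by cohomological dimension: both $F_n$ and the free profinite group $\hat F_n$ have cohomological dimension $1$, so the comparison map $H^n(\hat F_n;M)\to H^n(F_n;M)$ vanishes on both sides for $n\geq 2$, while in degrees $0$ and $1$ it is an isomorphism by density of $F_n$ in $\hat F_n$ combined with the finiteness of the coefficient module $M$. For a closed orientable surface group $\pi_1\Sigma_g$ the cohomological dimension is $2$ on both sides; degrees $0$ and $1$ work as before, and in degree $2$ Poincar\'e duality identifies both $H^2(\pi_1\Sigma_g;M)$ and $H^2(\widehat{\pi_1\Sigma_g};M)$ with the coinvariants $M_{\pi_1\Sigma_g}$.

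The extension step uses the standard lemma of Serre: if $N \nsgp[f] G$ with $N$ good and $H^n(N;M)$ finite for every finite $N$-module $M$ and every $n$, then $G$ itself is good. This finiteness condition holds trivially for free groups (cohomological dimension $1$) and for surface groups (type $FP$, cohomological dimension $2$), so applying Serre's lemma to $H \nsgp[f] G$ completes the proof. The main obstacle is this final extension step: one must verify that the Hochschild--Serre spectral sequence for $\hat G$ with $E_2^{p,q}=H^p(G/H;H^q(\hat H;M))$ converges and agrees term by term with its discrete analogue. That ultimately rests on the finiteness of $H^q(H;M)$ for finite $M$, which guarantees the coefficient modules on the finite group $G/H$ are themselves finite, together with the tautology that finite groups have equal discrete and profinite cohomology.
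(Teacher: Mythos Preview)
The paper does not prove this statement; it is quoted without proof as a theorem of Grunewald--Jaikin-Zapirain--Zalesskii. So there is no paper-proof to compare against, and I assess your argument on its own merits.

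Your reduction via Selberg's lemma and Serre's finite-index criterion is correct and is the right overall strategy; the free-group case is also fine. The gap is in the surface-group step. You assert that $\mathrm{cd}(\widehat{\pi_1\Sigma_g})=2$ and that Poincar\'e duality identifies $H^2(\widehat{\pi_1\Sigma_g};M)$ with the coinvariants $M_{\pi_1\Sigma_g}$. Both statements are true, but neither follows from anything you have established: knowing that the discrete group $\pi_1\Sigma_g$ is a $\mathrm{PD}_2$ group does not automatically make its profinite completion a profinite $\mathrm{PD}_2$ group. The most direct route---completing the length-$2$ free $\Z[\pi_1\Sigma_g]$-resolution coming from the cell structure on $\Sigma_g$ and checking that it stays exact---is exactly the content of the paper's Proposition~\ref{goodgivesexact}, whose hypothesis is that the group is \emph{already} good. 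So as written the argument is circular at this point. (There is also a secondary issue: even granting that both $H^2$'s are abstractly $M_{\pi_1\Sigma_g}$, you must still check that the comparison map $H^2(\widehat{\pi_1\Sigma_g};M)\to H^2(\pi_1\Sigma_g;M)$ realises this isomorphism, not merely that source and target are abstractly the same.)

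One standard way to close the gap is to use a graph-of-groups decomposition: cut $\Sigma_g$ along a nonseparating simple closed curve to express $\pi_1\Sigma_g$ as an HNN extension of a free group over a cyclic subgroup. Separability of the edge group ensures the profinite completion inherits the same splitting, and then a Mayer--Vietoris/five-lemma comparison between the discrete and profinite long exact sequences reduces goodness of $\pi_1\Sigma_g$ to goodness of free and cyclic groups, which you already have. With that step supplied, the rest of your outline is sound.
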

\begin{thmquote}[Grunewald, Jaikin-Zapirain, Zalesskii \cite{GJ_ZZ08}]
Fully residually free groups are good.
\end{thmquote}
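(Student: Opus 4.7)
My approach is to prove this by induction on the hierarchical complexity of finitely generated fully residually free groups, exploiting the structure theorem of Sela (and Kharlampovich--Myasnikov): every such group, i.e.\ every limit group, is constructible by a finite sequence of amalgamated free products and HNN extensions with cyclic (or more generally abelian) edge groups, starting from free groups, closed surface groups, and finitely generated free abelian groups. I would first dispose of the base cases. Free groups are good because both discrete and profinite cohomology vanish in degrees above one, and the comparison map is readily checked to be an isomorphism in degrees zero and one. Closed surface groups are Fuchsian, so are good by the theorem quoted immediately above. Finitely generated free abelian groups $\mathbb{Z}^n$ are good by direct comparison of the Koszul resolution computing $H^\ast(\mathbb{Z}^n;M)$ with the continuous cohomology of $\widehat{\mathbb{Z}}^n$ on a finite coefficient module $M$.

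The inductive step is to show that if $G = A \ast_C B$, or $G$ is an HNN extension of $A$ over $C$, and $A$, $B$, $C$ are finitely generated and good, then $G$ is good --- provided the splitting is \emph{efficient}, meaning that the profinite completion respects the graph-of-groups structure: $\widehat{G} = \widehat{A} \ast_{\widehat{C}} \widehat{B}$ as an amalgamated free product in the category of profinite groups (and the analogous equality in the HNN case). Given efficiency, there are Mayer--Vietoris long exact sequences for the cohomology of $G$ with a finite coefficient module $M$ and for the continuous cohomology of $\widehat{G}$ with the same module, and the natural comparison map between the two sequences induces a map of long exact sequences. Applying the five lemma upgrades goodness of $A$, $B$, $C$ to goodness of $G$, completing the induction.

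The main obstacle, and where the real work lies, is establishing the efficiency hypothesis at each step of the hierarchy. Efficiency demands that each edge subgroup $C$ be separable in its adjacent vertex groups, and more subtly that the profinite topology on a vertex group restrict to the full profinite topology on $C$; both conditions are needed for the profinite graph of groups to behave like its discrete counterpart. For the cyclic edge groups of the limit-group hierarchy this should follow from Wilton's theorem that limit groups are LERF, combined with the Ribes--Zalesskii machinery on efficiency of splittings of LERF groups over cyclic subgroups. Once this efficiency is verified at every level of the hierarchy, the induction closes cleanly and the theorem follows.
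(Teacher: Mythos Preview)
The paper does not prove this statement; it is simply quoted as a result from \cite{GJ_ZZ08} and used as a black box, so there is no proof in the paper to compare your proposal against.

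That said, your outline is a faithful sketch of the argument in \cite{GJ_ZZ08}: the hierarchical structure of limit groups (via Sela or Kharlampovich--Myasnikov) together with a Mayer--Vietoris comparison between discrete and profinite cohomology is exactly the shape of their proof. Two comments on the details. First, the base-case list can be trimmed: in the standard hierarchy the building blocks are free abelian groups and free groups (surface groups themselves decompose further over cyclic edges), so you need not invoke the Fuchsian case separately. Second, the efficiency step is the delicate part, and your appeal to Wilton's LERF theorem is anachronistic relative to \cite{GJ_ZZ08}; the original argument establishes the required separability of the cyclic (or abelian) edge groups directly at each level of the hierarchy, which is considerably easier than full subgroup separability. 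Your sketch is correct in spirit, but be aware that ``LERF implies efficiency'' is not automatic: one must check that the full profinite topology is induced on edge groups, not merely that they are closed.
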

Under certain finiteness assumptions which hold in our cases of interest, an extension of a good group by a good group is itself good (see \cite{Serrecohom}); furthermore, finite index subgroups of good groups are good. Hence:
\begin{clly} The fundamental groups of \SFS{}s are good. \end{clly}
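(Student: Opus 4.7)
The plan is to realise $\pi_1(M)$ as a central cyclic extension of a Fuchsian group, and then to push goodness through this extension by assembling the two cited theorems with the extension criterion attributed to Serre.

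First, I would recall the standard exact sequence associated to the Seifert fibration of a closed orientable \SFS{} $M$ with base orbifold $B$:
\[ 1 \longrightarrow K \longrightarrow \pi_1(M) \longrightarrow \ofg(B) \longrightarrow 1, \]
where $K$ is the (central) cyclic subgroup generated by the class of a regular fibre. If $M$ has spherical geometry then $\pi_1(M)$ is finite, and goodness is automatic because a finite group coincides with its profinite completion. In all remaining cases $K \iso \Z$, and $\ofg(B)$ is an infinite finitely generated Fuchsian group, hence good by the Grunewald--Jaikin-Zapirain--Zalesskii result; the kernel $K \iso \Z$ is good as well, being a free group of rank one (equivalently a Fuchsian group of the simplest kind).

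To conclude that the middle term $\pi_1(M)$ is good, I would invoke Serre's extension criterion. Its hypothesis requires that, for every finite $K$-module $A$ and every $n \geq 0$, the cohomology group $H^n(K;A)$ be finite. This is immediate for $K \iso \Z$: only $H^0(\Z;A)$ and $H^1(\Z;A)$ can be non-zero, and both are subquotients of the finite group $A$. Applying Serre's theorem to our extension yields goodness of $\pi_1(M)$. I expect no substantive obstacle: the corollary is essentially an assembly of the stated inputs, with the only verification being the finiteness condition on the kernel, which is routine for an infinite cyclic group.
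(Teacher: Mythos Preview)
Your approach is essentially the paper's: realise $\pi_1(M)$ as an extension of the base orbifold group by the cyclic fibre subgroup, observe that both ends are good, and invoke Serre's extension criterion with the easy finiteness check on the cohomology of $\Z$. Two small inaccuracies are worth flagging. First, the fibre subgroup $K$ is normal but not always central: when the base orbifold is non-orientable, $K$ is only central in an index-2 subgroup. This does not matter for the argument, since Serre's criterion requires only a short exact sequence, not a central one. Second, your claim that in all non-spherical cases $\ofg(B)$ is Fuchsian overlooks the geometries $\sph{2}\times\R$, $\E^3$, and $\Nil$, where $\ofg(B)$ is finite or Euclidean-crystallographic rather than Fuchsian; these groups are still good (being finite or virtually $\Z^2$), so the conclusion survives, but the justification needs a word. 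With those caveats, the argument matches the paper's.
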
 
Furthermore virtually fibred 3-manifolds have good fundamental group; with the solution of the Virtual Fibring Conjecture by Agol \cite{agol}, this includes all finite-volume hyperbolic 3-manifolds. Finally, we can piece together these geometric manifold with a theorem of Wilton and Zalesskii \cite{WZ10}, that if all pieces of the JSJ (or prime) decomposition have good fundamental group, so does the whole manifold, to conclude:
\begin{thmquote}[Agol, Wilton-Zalesskii]
Fundamental groups of compact 3-mani\-folds are good.
\end{thmquote}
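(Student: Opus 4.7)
The plan is to reduce the general case to the cases of Seifert fibred and hyperbolic pieces by decomposing along spheres and tori, and then to invoke the Wilton--Zalesskii closure result recorded in the paragraph preceding the statement.

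First I would handle the prime decomposition. A compact 3-manifold $M$ is a connected sum of prime 3-manifolds $M_1, \dots, M_k$ (possibly with boundary absorbed into one of the summands), so $\pi_1 M$ is a free product of the $\pi_1 M_i$ together with a free group. The theorem of Wilton--Zalesskii applies to the prime decomposition, so if each $\pi_1 M_i$ is good then so is $\pi_1 M$. This reduces the problem to showing that each prime compact 3-manifold has good fundamental group.

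Next I would apply the JSJ (toroidal) decomposition to each prime factor. By the torus decomposition theorem, an irreducible prime 3-manifold is cut along a finite collection of essential incompressible tori (and annuli, in the bounded case, for the characteristic submanifold) into geometric pieces, each of which is either Seifert fibred or admits a finite-volume hyperbolic structure (this is where Perelman's geometrization is used). The Wilton--Zalesskii theorem again lets us pass from goodness of the pieces to goodness of the full manifold, so it suffices to establish goodness for each kind of geometric piece. The $\sph2$-bundle and other non-irreducible cases (e.g.\ $\sph2\times\sph1$) have essentially finite or $\Z$ fundamental group and are trivially good.

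The Seifert fibred case is exactly the corollary already noted in the excerpt, so nothing further is needed there. For the hyperbolic case, I would invoke Agol's solution of the Virtual Fibring Conjecture: every finite-volume hyperbolic 3-manifold has a finite cover that fibres over the circle. Such a fibred cover $\widetilde M$ has fundamental group fitting in a short exact sequence
\[ 1 \longrightarrow \pi_1 \Sigma \longrightarrow \pi_1 \widetilde M \longrightarrow \Z \longrightarrow 1,\]
where $\pi_1\Sigma$ is a (possibly punctured) surface group, hence Fuchsian and good by Grunewald--Jaikin-Zapirain--Zalesskii, and $\Z$ is obviously good. Serre's extension result for goodness (whose finiteness hypotheses are easily checked, as $\pi_1\Sigma$ has finite cohomological dimension and is of type $\mathrm{FP}$) then shows $\pi_1 \widetilde M$ is good. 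Finally, $\pi_1 M$ contains $\pi_1 \widetilde M$ as a finite-index subgroup, that is, sits in a finite extension of a good group by a finite group, which is again good by the same extension principle. Assembling the geometric pieces by Wilton--Zalesskii completes the proof.

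The main technical obstacle is really packaged into the ingredients being quoted: Agol's virtual fibring theorem is the deepest input, and the Wilton--Zalesskii gluing requires verifying that edge groups (tori) inject with compatible profinite topologies in adjacent pieces, which is the technical heart of their paper. Beyond these, the assembly is mechanical.
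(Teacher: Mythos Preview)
Your proposal is correct and follows the same outline the paper sketches in the paragraph immediately preceding the quoted theorem: Seifert pieces are good by the earlier corollary, hyperbolic pieces are good via Agol's virtual fibring together with Serre's extension and finite-index closure results, and the Wilton--Zalesskii theorem assembles the JSJ and prime pieces. The paper does not give any further detail than this, so your expansion is a faithful fleshing-out of exactly the argument the author has in mind.
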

Note that in the case where the action of $G$ on $A$ is trivial, and $A$ is finite, we have the identifications
\[ H^1(\hat{G}; A) \iso {\rm Hom}(\hat{G}, A) \iso {\rm Hom}(G, A) \iso H^1(G; A) \] 
so goodness is only important when working with higher cohomology groups.

The notion of cohomological dimension of a group also extends well to profinite groups.
\begin{defn}
If $H$ is an abelian group, its \emph{$p$-primary component} $H(p)$ for a prime $p$ is the subgroup of $H$ consisting of all elements whose order is a power of $p$. 
\end{defn}
\begin{defn}
Let $\Gamma$ be a profinite group, and $p$ a prime. The \emph{$p$-cohomological dimension} ${\rm cd}_p(\Gamma)$ of $\Gamma$ is the smallest integer $n$ such that, for all \emph{finite} $\Gamma$-modules $A$, 
\[ H^i(\Gamma; A)(p) = 0 \text{\quad for all } i>n \]
The \emph{cohomological dimension} ${\rm cd}(\Gamma)$ is the supremum of ${\rm cd}_p(\Gamma)$ over all primes $p$.
\end{defn}

Because the cohomological dimension only relies on finite modules in its definition, we also have:
\begin{prop}
If $G$ is a good group, then ${\rm cd}(G) \geq {\rm cd}(\hat{G})$ 
\end{prop}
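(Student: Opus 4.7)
The plan is to translate the problem across the goodness isomorphism. The cohomological dimension of $\hat G$ is defined using only finite $\hat G$-modules, whereas the classical cohomological dimension of the discrete group $G$ controls $H^i(G;A)$ for \emph{every} $G$-module $A$; so if we can promote a finite $\hat G$-module to a $G$-module without changing its cohomology, we are done.

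Concretely, let $n = {\rm cd}(G)$ and let $A$ be an arbitrary finite $\hat G$-module. Since $A$ is finite and the action of $\hat G$ is continuous, the action factors through some finite quotient $\hat G/U$, and pulling back along the canonical map $\iota:G \to \hat G$ makes $A$ into a (discrete) finite $G$-module. For any $i > n$, the definition of ${\rm cd}(G)$ gives $H^i(G;A)=0$. Goodness of $G$, applied to the finite module $A$, yields a natural isomorphism $H^i(\hat G;A)\iso H^i(G;A)$, so $H^i(\hat G;A)=0$ as well. In particular every $p$-primary component $H^i(\hat G;A)(p)$ vanishes, which by definition says ${\rm cd}_p(\hat G)\le n$ for every prime $p$, hence ${\rm cd}(\hat G)\le n = {\rm cd}(G)$.

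The only thing to be careful about is that the module appearing in the profinite definition is given a priori as a $\hat G$-module, not as a $G$-module, so one has to observe that continuity plus finiteness lets us descend the action through $\iota$; once this is in hand, the inequality is immediate from the definition of goodness. There is no hard step here: the real content has already been packaged into the word ``good''.
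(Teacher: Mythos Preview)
Your proof is correct and matches the paper's approach exactly: the paper does not give a formal proof but simply remarks that ``the cohomological dimension only relies on finite modules in its definition,'' which is precisely the observation you have unpacked. Your expansion of this into an explicit argument (pull back the finite $\hat G$-module along $\iota$, apply goodness, and conclude vanishing of $p$-primary parts for all $p$) is the intended reasoning.
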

In particular, we find:
\begin{prop}
If $M$ is a compact aspherical 3-manifold, then ${\rm cd}(\widehat{\pi_1 M}) = 3$.
\begin{proof}
Let $\Gamma = \widehat{\pi_1 M}$. We already have ${\rm cd}(\Gamma) \leq 3$. But $\pi_1 M$ is good and $H^3(M; \Z/2) \iso \Z/2$, so $H^3(\Gamma; \Z/2)(2) \neq 0$, noting that $M$ is aspherical so that the cohomology of $M$ and its fundamental group are the same. Hence ${\rm cd}(\Gamma) \geq {\rm cd}_2(\Gamma) \geq 3$ also.
\end{proof}
\end{prop}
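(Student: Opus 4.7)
The plan is a two-sided squeeze: show $\mathrm{cd}(\widehat{\pi_1 M}) \leq 3$ by passing cohomological dimension from the discrete group to its completion via goodness, then produce a nonvanishing degree-$3$ cohomology class (with a finite coefficient module) to get the matching lower bound.

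For the upper bound, I would first note that since $M$ is compact (hence, by the paper's conventions, closed) aspherical of dimension $3$, it is a $K(\pi_1 M, 1)$, so the group cohomology $H^*(\pi_1 M; A)$ agrees with $H^*(M; A)$ for every coefficient module $A$. In particular $\mathrm{cd}(\pi_1 M) \leq 3$. Since compact $3$-manifold groups are good (Agol--Wilton--Zalesskii, quoted just above), the proposition stating $\mathrm{cd}(\hat G) \leq \mathrm{cd}(G)$ for good groups $G$ applies and yields $\mathrm{cd}(\widehat{\pi_1 M}) \leq 3$.

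For the lower bound I would aim to detect nontriviality in degree $3$ using a finite coefficient module, since cohomological dimension is computed on finite modules. Let $\Gamma = \widehat{\pi_1 M}$ and take $A = \Z/2$ with trivial $\Gamma$-action; this is a finite $\Gamma$-module. By goodness applied in degree $3$, the comparison map $H^3(\Gamma; \Z/2) \to H^3(\pi_1 M; \Z/2)$ is an isomorphism. Asphericity identifies the right-hand side with $H^3(M; \Z/2)$, which is $\Z/2$ because $M$ is a closed (orientable) $3$-manifold. Thus $H^3(\Gamma; \Z/2)(2) \neq 0$, whence $\mathrm{cd}_2(\Gamma) \geq 3$ and so $\mathrm{cd}(\Gamma) \geq 3$.

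Combining the two bounds gives $\mathrm{cd}(\widehat{\pi_1 M}) = 3$. The only real content is invoking goodness to shuttle cohomology between $\pi_1 M$ and its profinite completion; everything else is bookkeeping with the conventions (closed and orientable) and the standard fact that a closed orientable $n$-manifold has $H^n(-;\Z/2) = \Z/2$. There is no genuine obstacle, so the argument is essentially forced once goodness of $\pi_1 M$ is in hand.
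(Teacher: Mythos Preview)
Your argument is correct and follows exactly the same route as the paper: goodness plus $\mathrm{cd}(\pi_1 M)\leq 3$ for the upper bound, and goodness plus $H^3(M;\Z/2)\cong\Z/2$ to force $\mathrm{cd}_2(\Gamma)\geq 3$ for the lower bound. One minor remark: with $\Z/2$ coefficients you do not actually need orientability to conclude $H^3(M;\Z/2)\cong\Z/2$ for a closed $3$-manifold, so that hypothesis is not doing any work in this step.
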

Similarly, as fully residually free groups are good, we find:
\begin{prop}
If $F$ is a free group, and $\Sigma$ is a closed surface with $\chi(\Sigma)<0$, then ${\rm cd}(\hat{F}) = 1$ and ${\rm cd}(\widehat{\pi_1 \Sigma}) = 2$.
\end{prop}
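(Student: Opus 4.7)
The plan is to mirror the argument just given for aspherical 3-manifolds: sandwich ${\rm cd}(\hat G)$ between a classical upper bound, coming from goodness combined with a small $K(\pi,1)$ for $G$, and a lower bound, coming from a non-vanishing mod-$2$ cohomology class transported through goodness.

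For the free case, I would first use that a free group $F$ is the fundamental group of a graph, so ${\rm cd}(F)\leq 1$, and that $F$ is good (being fully residually free, by the Grunewald--Jaikin-Zapirain--Zalesskii theorem cited above). The preceding proposition then gives ${\rm cd}(\hat F)\leq 1$. For the matching lower bound, I would invoke the identifications
\[H^1(\hat F;\Z/2)\iso {\rm Hom}(\hat F,\Z/2)\iso {\rm Hom}(F,\Z/2)\iso H^1(F;\Z/2)\]
noted just before the previous proposition; this is non-zero whenever $F$ is non-trivial (the only case of interest).

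For the surface case, a closed surface $\Sigma$ with $\chi(\Sigma)<0$ is aspherical with a 2-dimensional $K(\pi_1\Sigma,1)$, namely $\Sigma$ itself, so ${\rm cd}(\pi_1\Sigma)\leq 2$. Since $\pi_1\Sigma$ is a cocompact Fuchsian group, it is good, and the preceding proposition gives ${\rm cd}(\widehat{\pi_1\Sigma})\leq 2$. The matching lower bound is obtained exactly as in the preceding 3-manifold proposition: goodness provides $H^2(\widehat{\pi_1\Sigma};\Z/2)\iso H^2(\pi_1\Sigma;\Z/2)\iso H^2(\Sigma;\Z/2)\iso \Z/2$, using $\Z/2$-coefficient Poincaré duality (which needs no orientation), so ${\rm cd}_2(\widehat{\pi_1\Sigma})\geq 2$.

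I expect no real obstacle; this is a direct application of the goodness machinery, exactly parallel to the preceding proposition. The only mild subtleties are excluding the trivial case of $F$ in the first half, and allowing non-orientable surfaces in the second half (where one may pass to the orientable double cover, using that finite index subgroups of good groups are good), both of which are easily handled in passing.
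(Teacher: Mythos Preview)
Your proposal is correct and matches the paper's approach exactly: the paper does not give a separate proof but simply writes ``Similarly, as fully residually free groups are good, we find'' before stating the proposition, indicating precisely the sandwich argument you describe (upper bound from goodness plus the classical ${\rm cd}$, lower bound from a non-vanishing $H^n(-;\Z/2)$ class transported via goodness). The only cosmetic difference is that the paper's phrasing invokes goodness via the fully residually free theorem rather than the Fuchsian theorem for $\pi_1\Sigma$; either route works, and your remark about non-orientable surfaces is already handled directly since $H^2(\Sigma;\Z/2)\iso\Z/2$ for any closed surface without needing to pass to the orientation cover.
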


We will also need the following result from \cite{Serrecohom}.
\begin{prop}
Let $p$ be prime, $\Gamma$ a profinite group, and $\Delta$ a closed subgroup of $\Gamma$. Then ${\rm cd}_p (\Delta) \leq {\rm cd}_p (\Gamma)$.
\end{prop}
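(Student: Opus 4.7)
The plan is to use Shapiro's lemma for profinite groups to convert cohomology of $\Delta$ into cohomology of $\Gamma$ with coefficients in a coinduced module, and then to decompose that module as a filtered colimit of finite $\Gamma$-modules so that the hypothesis on ${\rm cd}_p(\Gamma)$ becomes applicable. Fix a finite $\Delta$-module $A$ and an integer $i>{\rm cd}_p(\Gamma)$; the goal is to show $H^i(\Delta;A)(p)=0$.

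First I would form the coinduced module
\[ M = \{f:\Gamma\to A \text{ continuous} \mid f(\delta g) = \delta\cdot f(g) \text{ for all } \delta\in\Delta,\, g\in\Gamma\}, \]
a discrete $\Gamma$-module under $(g\cdot f)(x) = f(xg)$. The profinite Shapiro lemma, proved via comparison of the continuous bar resolutions of $\hat{\mathbb{Z}}$ over $\Gamma$ and $\Delta$ as in \cite{Serrecohom}, yields a natural isomorphism $H^i(\Gamma;M)\cong H^i(\Delta;A)$. It therefore suffices to prove that $H^i(\Gamma;M)(p)=0$.

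Next I would observe that each continuous $f\in M$ factors through some finite quotient $\Gamma/U$ with $U\nsgp[o]\Gamma$. Writing $M_U\leq M$ for the $\Gamma$-submodule of those $f$ factoring through $\Gamma/U$, each $M_U$ is finite (it sits inside the finite set of maps $\Gamma/U\to A$), and $M = \varinjlim_U M_U$ as $U$ ranges over open normal subgroups of $\Gamma$. Continuous cohomology of a profinite group commutes with filtered colimits of discrete coefficient modules, so $H^i(\Gamma;M) = \varinjlim_U H^i(\Gamma;M_U)$. By the definition of ${\rm cd}_p(\Gamma)$ applied to the finite modules $M_U$, every term on the right has trivial $p$-primary component, hence so does the colimit, completing the argument.

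The genuinely delicate input is Shapiro's lemma in the profinite category: although standard, it requires care because the induced modules and resolutions differ in subtle ways from the discrete case, and one must verify that the comparison map is natural and an isomorphism at the chain level. Once that is in hand, the colimit step is formal, since $\Gamma^i$ is compact and any continuous cochain from $\Gamma^i$ into a filtered colimit of discrete modules must land in a single finite stage; the vanishing of $p$-primary parts then passes to the colimit because the forgetful functor to abelian groups preserves filtered colimits.
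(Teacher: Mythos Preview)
The paper does not actually prove this proposition; it is stated as a result quoted from \cite{Serrecohom}, with no proof given in the text. Your argument is correct and is essentially the standard proof found in Serre: coinduce the finite $\Delta$-module $A$ up to $\Gamma$, invoke Shapiro's lemma, and then write the coinduced module as a filtered union of finite $\Gamma$-submodules so that the finite-module hypothesis in the definition of ${\rm cd}_p$ applies termwise. The only point worth adding is that your submodules $M_U$ are genuinely $\Gamma$-stable because $U$ is taken to be \emph{normal} in $\Gamma$, so the right translation action preserves the factoring condition; you implicitly use this but do not say it. Otherwise there is nothing to compare, since the paper simply defers to the reference.
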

\begin{clly}\label{torsionfree}
Let $G$ be a residually finite, good group of finite cohomological dimension over \Z. Then $\hat G$ is torsion-free.
\end{clly}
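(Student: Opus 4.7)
The plan is to argue by contradiction, exploiting the fact that nontrivial torsion in a profinite group forces infinite $p$-cohomological dimension, while goodness combined with finite cohomological dimension of $G$ forces $\operatorname{cd}(\hat G)$ to be finite.

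First I would assume for contradiction that $\hat G$ contains a nontrivial torsion element. Picking a prime $p$ dividing its order and raising to a suitable power, I obtain a closed subgroup $\Delta \leq \hat G$ isomorphic to $\Z/p$ (closed because finite subgroups of Hausdorff topological groups are automatically closed). By the last cited proposition of Serre from the excerpt, $\operatorname{cd}_p(\Delta) \leq \operatorname{cd}_p(\hat G) \leq \operatorname{cd}(\hat G)$.

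Next I would invoke the previous proposition: since $G$ is good, $\operatorname{cd}(\hat G) \leq \operatorname{cd}(G) < \infty$ by hypothesis. So the right-hand side is finite. On the other hand, the cohomology of the cyclic group $\Z/p$ with coefficients in $\Z/p$ is nonzero in every degree, so $\operatorname{cd}_p(\Z/p) = \infty$. This contradicts the chain of inequalities, so no such torsion element exists and $\hat G$ is torsion-free.

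The only real content is the standard computation that $\operatorname{cd}_p(\Z/p)=\infty$ in the profinite sense, which is immediate from the usual periodic resolution, and the observation that a finite cyclic subgroup of a profinite group is automatically closed so that the subgroup proposition applies. No step looks like a genuine obstacle; the corollary is essentially a direct packaging of the two preceding propositions together with the classical cohomology of finite cyclic groups.
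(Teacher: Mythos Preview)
Your argument is correct and is precisely the intended one: the paper states this as an immediate corollary of the two preceding propositions (that goodness gives $\operatorname{cd}(\hat G)\leq\operatorname{cd}(G)$, and that $\operatorname{cd}_p$ is monotone under passing to closed subgroups), combined with the standard fact that a nontrivial finite cyclic group has infinite $p$-cohomological dimension. There is nothing to add.
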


\subsection{Spectral sequence}
Another useful property of the cohomology of profinite groups is that the Serre spectral sequence of group cohomology holds even in the profinite world. That is, give an exact sequence of profinite groups 
\[ 1 \to \Delta \to \Gamma \to \Gamma/\Delta \to 1 \]
and a continuous $\Gamma$-module $A$, then the natural continuous action of $\Gamma$ on $\Delta$ by conjugation descends to an action of $\Gamma/\Delta$ on $H^q(\Delta; A)$. The cohomology of $\Gamma$ is then given by a spectral sequence
\[ E_2^{p,q} = H^p(\Gamma/\Delta; H^q(\Delta; A)) \Rightarrow H^n(\Gamma; A) \]

Any spectral sequence induces an exact sequence in the low-dimensional (co)homology groups; here it is the {\em five term exact sequence}
\begin{align*} 0\to H^1(\Gamma/\Delta; H^0(\Delta;A))& \to H^1(\Gamma;A)\to H^0(\Gamma/\Delta; H^1(\Delta;A))\\
&\to H^2(\Gamma/\Delta; H^0(\Delta;A))\to H^2(\Gamma;A) 
\end{align*}
When $\Gamma$ is a free profinite group and $A$ a trivial module, in particular when a presentation
\[ 1\to R\to F\to G\to 1\]
of an abstract group yields a short exact sequence of profinite groups
\[ 1\to \bar{R}\to \hat F\to\hat G\to 1\]
we get the exact sequence
\[0\to H^1(\hat G;A)\to H^1(\hat F;A)\to H^1(\frac{\bar{R}}{[\bar{R},\hat{F}]}; A) \to H^2(\hat G;A)\to 0\]

\subsection{Chain complexes}
It will be necessary later to work with certain exact sequences of modules over a `group ring' of a profinite group. In this section we will recall and prove some of the necessary tools.
\begin{defn}
Given a profinite abelian group $A$ (usually $\hat \Z$, $\hat \Z_{(p)}$ or a finite abelian group) and a profinite group $\Gamma$, the {\em completed group ring} $A[[\Gamma]]$ is defined as the inverse limit
\[ \varprojlim_{A', N} A/A' [\Gamma/N] \]
of group rings indexed over the finite index open normal subgroups $A', N$ of $A,\Gamma$. It is a compact Hausdorff topological ring. 
\end{defn}
If, for instance, $\Gamma=\hat G$ and $A=\hat\Z$ for $G$ residually finite, $\hat\Z[[\hat G]]$ naturally contains a copy of $\Z{} [G]$ as a dense subring. An abelian group $M$ with a continuous $\Gamma$-action now becomes a (left- or right-) $A[[\Gamma]]$-module in the usual way. 

These modules over $A[[\Gamma]]$, together with continuous module maps, form an abelian category with the same formal properties as the category of $R$-modules for a ring $R$; so the machinery of homological algebra works and we can define profinite group cohomology by starting from an arbitrary resolution of $\hat\Z$ by projective (left) $\hat \Z [[\Gamma]]$-modules and applying the functor ${\rm Hom}_{\hat\Z [[\Gamma]]}(-,M)$ giving the continuous homomorphisms from a module to $M$. If $M$ is a module with trivial $\Gamma$-action, we can factor this through the functor $\hat \Z \otimes_{\hat\Z[[\Gamma]]}-$ which `forgets the $\Gamma$-action' on the chain complex.

We will need to show that, under certain conditions, a free resolution of \Z{} by $\Z{} [G]$-modules yields a free resolution of $\hat \Z$ by $\hat\Z [[\hat G]]$-modules. To this end we use the following propositions and definitions, which are adapted from results in \cite{nakamura94}.

\begin{defn}
A discrete group $G$ is of type FP($n$) if there is a resolution of the trivial module \Z{} by projective $\Z{} [G]$-modules $P_\bullet$, such that $P_i$ is finitely generated for $0\leq i\leq n$.
\end{defn}

\begin{prop}\label{goodgiveslimits}
Let $G$ be a discrete group which is good. Then:
\begin{itemize}
\item $\varinjlim_{K\sbgp[f] G} H^q(K; M) = 0$ for every finite $G$-module $M$ and all $q\geq 1$
\item If $G$ is of type FP($n$), then $\varprojlim_{K\sbgp[f] G} H_q(K;M) = 0$ for every finite $G$-module $M$ and all $1\leq q\leq n$.
\end{itemize}
\begin{proof}
First note we may restrict to the case of trivial modules in the conclusions, as any finite $G$-module $M$ becomes trivial over $K$ for a cofinal subset of $\{K\sbgp[f] G\}$. Thus we may view $M$ interchangeably as a left or right module. The maps ${\rm res}^K_{K'}:H^q(K; M) \to H^q(K';M)$ are given by restriction of cochains. The direct limit in question (categorically a colimit) is zero if all elements of $H^q(K;M)$ are `eventually zero'; that is, for all $x\in H^q(K;M)$ there is some $K'\leq K$ such that $x$ is mapped to zero under the restriction map $H^q(K; M) \to H^q(K';M)$. By goodness of $K$, there is a natural identification $H^q(K;M)\iso H^q(\hat K; M)$ so we may represent $x$ as a continuous cochain $\xi: \hat K^q\to M$ ($q>0$). The preimage of $0$ under $\xi$ is some open subset of $\hat K^q$; products of open subgroups of $\hat K$ form a neighbourhood basis in $\hat K^q$, so we may choose $\Delta \sbgp[o] \hat K$ such that $\xi|_{\Delta^q}=0$; then setting $K'=K\cap \Delta$ (so that $\Delta = \hat K'$) the commuting diagram
\[\begin{tikzcd}
H^q(K;M) \ar{r}{{\rm res}^K_{K'}} & H^q(K';M) \\
H^q(\hat K; M) \ar{u}{\iso} \ar{r}{{\rm res}^{\hat K}_{\hat K'}} & H^q(\hat K'; M) \ar{u}{\iso}
\end{tikzcd}\]
shows that ${\rm res}^K_{K'}(x)=0$; hence that $\varinjlim_{K\sbgp[f] G} H^q(K; M) = 0$.

For the second conclusion, assume $G$ is of type FP($n$). Then $H_q(K;M)$ is finite for all $0\leq q\leq n$, $K\sbgp[f] G$ and $M$ a finite $G$-module, and similarly for the cohomology. By Proposition \ref{limitnonzero} the condition that an inverse limit of finite abelian groups $A_i$ is trivial is equivalent to the existence, for each $i$ in the indexing set, of $j\geq i$ such that $A_j\to A_i$ is the zero map; and similarly for a direct limit of finite groups.

So let $K$ be a finite index subgroup of $G$, and take $K'$ such that the restriction map ${\rm res}^{K}_{K'}$ is zero on each $H^q$. We show that we can dualise this to find that the corestriction map is also zero. Note that a finite-index subgroup of a group of type FP($n$) is also of type FP($n$). Let $P_\bullet$ be a projection resolution of $\Z$ by left $\Z K$-modules, which is finitely generated in dimensions at most $n$. There is a natural isomorphism (see \cite{cartaneilenberg}, Proposition II.5.2)
\[ {\rm Hom}_{\Z K}(P_\bullet, {\rm Hom}_{\Z}(M,\Q/\Z))\iso {\rm Hom}_{\Z}(M\otimes P_\bullet, \Q/\Z)\]
Now take homology; $\Q/\Z$ is an injective abelian group, so ${\rm Hom}(-,\Q/\Z)$ is an exact functor and commutes with homology; hence we get a natural isomorphism
\[ H^q(K; M^\ast)\iso (H_q(K;M))^\ast \]
where $N^\ast$ denotes the dual ${\rm Hom}(N,\Q/\Z)$ of an abelian group. Finite abelian groups are isomorphic to their dual and canonically isomorphic to their double-dual; so we get a natural isomorphism
\[H^q(K; M^\ast)^\ast \iso H_q(K;M)\]
in dimensions $0\leq q\leq n$ where the right hand side is finite. The inclusion $K'\to K$ induces the zero map on the left hand side by assumption, noting that $M$ is isomorphic to $M^\ast$ so the restriction map with $M^\ast$ coefficients also vanishes. Hence the map on the right hand side, the corestriction map, is zero. 
\end{proof}
\end{prop}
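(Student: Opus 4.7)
The plan is to reduce both conclusions to the criterion of Proposition \ref{limitnonzero} and its dual for direct limits: an inverse (resp.\ direct) system of finite abelian groups has trivial limit (resp.\ colimit) iff for each index $i$ there is a $j\geq i$ on which the transition map to (resp.\ from) $A_i$ is zero. So for each $K\sbgp[f] G$ I want to produce $K'\sbgp[f] K$ on which the relevant transition map between $K'$ and $K$ vanishes.

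For the first conclusion, I would first reduce to trivial coefficients. Since $M$ is finite, the kernel of $G\to\mathrm{Aut}(M)$ is of finite index, so the $K$ acting trivially on $M$ form a cofinal sub-collection of $\{K\sbgp[f]G\}$; by cofinality I may assume $M$ is a trivial $K$-module. Now fix such a $K$ and a class $x\in H^q(K;M)$ with $q\geq 1$. Goodness of $K$ (inherited from $G$ as a finite-index subgroup) identifies $H^q(K;M)\iso H^q(\hat K;M)$, under which $x$ is represented by a continuous cochain $\xi:\hat K^q\to M$. Since $M$ is discrete and finite, $\xi^{-1}(0)$ is an open neighbourhood of the identity in $\hat K^q$, and products of open subgroups of $\hat K$ form a neighbourhood basis of the identity, so I can choose $\Delta\sbgp[o]\hat K$ with $\xi|_{\Delta^q}=0$. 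Setting $K'=K\cap \Delta$---the unique finite-index subgroup of $K$ whose closure in $\hat K$ is $\Delta$, by Proposition \ref{subgroupcorr}---naturality of restriction in the commutative square relating $K',K$ and their profinite completions shows that $x$ maps to zero in $H^q(K';M)$.

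For the second conclusion, assume $G$ is of type FP($n$); then so is each $K\sbgp[f] G$, and $H_q(K;M)$ is \emph{finite} for $0\leq q\leq n$. I would dualise homology to cohomology via Pontryagin duality. Write $A^{\ast}=\mathrm{Hom}(A,\Q/\Z)$. Using a $\Z K$-projective resolution $P_\bullet$ of $\Z$ finitely generated through degree $n$, the standard adjunction
\[ \mathrm{Hom}_{\Z K}(P_\bullet, M^{\ast})\iso \mathrm{Hom}_{\Z}(M\otimes_{\Z K} P_\bullet,\Q/\Z) \]
together with exactness of $\mathrm{Hom}(-,\Q/\Z)$ (as $\Q/\Z$ is injective) gives a natural isomorphism $H^q(K;M^{\ast})\iso H_q(K;M)^{\ast}$. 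Since finite abelian groups are canonically isomorphic to their double duals, we obtain a natural isomorphism $H_q(K;M)\iso H^q(K;M^{\ast})^{\ast}$ in the range $q\leq n$. Applying the first conclusion to the finite $G$-module $M^{\ast}$ produces, for each $K$, a $K'\sbgp[f] K$ on which the restriction $H^q(K;M^{\ast})\to H^q(K';M^{\ast})$ vanishes; dualising, the induced map $H_q(K';M)\to H_q(K;M)$---the transition map in the inverse system---is zero, and Proposition \ref{limitnonzero} completes the argument.

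The main technical obstacle I anticipate is the variance bookkeeping in the second half: one must verify that Pontryagin duality turns restriction in cohomology into the map induced by inclusion in homology, and not into the transfer, so that the produced vanishing really matches the transition maps of the inverse system $\{H_q(K;M)\}$. The FP($n$) hypothesis is needed precisely to ensure the relevant homology groups are finite in the required range, so that double-dualising actually recovers them.
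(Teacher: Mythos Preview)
Your proposal is correct and follows essentially the same route as the paper: reduce to trivial coefficients by cofinality, use goodness to pass to continuous cochains on $\hat K$ and find an open subgroup $\Delta$ on which the cochain vanishes for the first part, then Pontryagin-dualise via $H^q(K;M^\ast)\iso H_q(K;M)^\ast$ (using injectivity of $\Q/\Z$ and the FP($n$) hypothesis for finiteness) to convert the vanishing restriction into a vanishing corestriction for the second. The variance bookkeeping you flag is exactly the point the paper handles implicitly by invoking naturality of the duality isomorphism; your caution there is well placed but no extra argument is needed beyond what you sketch.
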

 To prove the next proposition, we will need some exactness properties of the functor $\varprojlim$. In general this functor will not be exact and so will not commute with homology. A well-known condition for exactness is the {\em Mittag-Leffler condition}; roughly, it is an `eventual stability' condition. See \cite{weibel} for a full treatment; here we merely state the definition and consequence.
\begin{defn}
An inverse system $(A_i)_{i\in I}$, where $(I,\leq)$ is a totally ordered inverse system (not merely partially ordered) satisfies the {\em Mittag-Leffler condition} if for all $i$ there exists $j\geq i$ such that 
\[{\rm im}(A_k\to A_i) = {\rm im}(A_j\to A_i)\]
for all $k\geq j$. That is, the images of the transition maps into $A_i$ are eventually stable.  
\end{defn} 
If all systems $C_{n, i}$ $(i\in I)$ in an inverse system of chain complexes $C_{\bullet,i}$ satisfy the Mittag-Leffler condition, then we will have 
\[ \varprojlim_i H_n(C_{\bullet,i}) = H_n(\varprojlim_i C_{\bullet, i}) \]
for all $n$. In our case, all the groups $C_{n,i}$ will be finite, so that the Mittag-Leffler condition holds (a decreasing sequence of subsets of a finite set is eventually constant). Our indexing set $I=\{(m,K)\,|\, m\in\N, K\sbgp[f]G\}$ will not be totally ordered; however by passing to the cofinal subset $J=\{(m!, K_n)\}$ where $K_n$ is the intersection of the finitely many subgroups of index at most $n$, we get a totally ordered indexing set without affecting the limit.

\begin{prop}\label{goodgivesexact}
Let $(C_i)_{0\leq i\leq n}\to \Z$ be a partial resolution of \Z{} by free finitely generated $\Z{} [G]$-modules $C_i= \Z{} [G]^{\oplus r_i}$ where $G$ is a good group. Then $(\hat C_i)_{0\leq i\leq n}\to \hat\Z$ is a partial resolution of $\hat \Z$ by free $\hat\Z[[\hat G]]$-modules
\[ \hat C_i = \hat \Z[[\hat G]]^{\oplus r_i} \]
\begin{proof}
For each $m\in\N$ and $K\sbgp[f] G$, set 
\[A_{i,m,K} = (\Z/m) [G/K] \otimes_{\Z{} [G]} C_i = (\Z/m) [G/K]^{\oplus r_i}\]
so that the new chain groups are
\[ \hat C_i = \varprojlim_{m,K} A_{i,m,K} \]
The groups $A_{i,m,K}$ are finite, so the homology of each chain complex $(A_{\bullet,m,K})$ is finite; as described above we may now use the Mittag-Leffler condition to conclude
\[ H_i(\hat C_\bullet) = H_i(\varprojlim_{m,K} A_{i,m,K}) = \varprojlim_{m,K}H_i(A_{i,m,K})\]
 Regarding $(C_\bullet)$ as an exact complex of free finitely generated $\Z{} [K]$-modules and noting that 
\[ A_{i,m,K} = (\Z/m)\otimes_{\Z{} [K]} C_i \]
these homology groups $H_i(A_{i,m,K})$ are precisely $H_i(K;\Z/m)$.
By the goodness of $G$ we can now use Proposition \ref{goodgiveslimits} to conclude
\[ H_i(\hat C_\bullet) = \varprojlim_{m,K}H_i(K;\Z/m) = 0\]
for $n-1\geq i\geq 1$; and for $i=0$
\[ H_0(\hat C_\bullet) = \varprojlim_{m,K}H_0(K;\Z/m) = \varprojlim_{m,K} \Z/m = \hat \Z\]
i.e. $(\hat C_\bullet)$ is a free partial resolution of $\hat \Z$.
\end{proof}
\end{prop}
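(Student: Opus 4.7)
The approach is to express each $\hat C_i$ as an inverse limit of finite chain modules, swap the inverse limit with homology using Mittag-Leffler, identify the resulting homology groups as group homologies of finite-index subgroups of $G$ with finite coefficients, and then kill them using the goodness hypothesis through Proposition \ref{goodgiveslimits}.

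First, I would unwind the definition of the completed group ring and its module structure to get
\[ \hat C_i = \varprojlim_{m,K} A_{i,m,K}, \qquad A_{i,m,K} = (\Z/m)[G/K] \otimes_{\Z[G]} C_i = (\Z/m)[G/K]^{\oplus r_i}, \]
indexed by $m\in\N$ and $K\nsgp[f] G$, confirming $\hat C_i \iso \hat\Z[[\hat G]]^{\oplus r_i}$ as claimed. Since each $A_{i,m,K}$ is finite, the Mittag-Leffler condition is automatic at every chain level after restricting to the totally ordered cofinal subsystem indexed by $(m!, K_n)$ described just before the statement; the functor $\varprojlim$ is then exact on this system and commutes with homology, giving
\[ H_i(\hat C_\bullet) = \varprojlim_{m,K} H_i(A_{\bullet,m,K}). \]

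Next I would identify these homology groups. Because $\Z[G]$ is free as a $\Z[K]$-module for $K\sbgp[f] G$, the complex $C_\bullet$ also serves as a free resolution of $\Z$ over $\Z[K]$, and
\[ A_{i,m,K} = (\Z/m)\otimes_{\Z[K]} C_i, \]
so $H_i(A_{\bullet,m,K}) = H_i(K;\Z/m)$, the group homology of $K$ with trivial finite coefficients. The hypothesis that $C_\bullet$ is a partial resolution by finitely generated free $\Z[G]$-modules through degree $n$ ensures that $G$ is of type FP($n$). Applying the second part of Proposition \ref{goodgiveslimits} (using goodness of $G$) yields $\varprojlim_{K} H_i(K;\Z/m) = 0$ for $1\leq i\leq n$ and every fixed $m$, so the double limit vanishes in the required range $1\leq i\leq n-1$ (we lose one dimension because the partial resolution need not be exact at the top). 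For $i=0$, $H_0(K;\Z/m) = \Z/m$, and the double limit recovers $\hat\Z$, as required.

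The main obstacle is the interchange of $\varprojlim$ with homology: in general this functor is only left exact, and commuting it with homology requires Mittag-Leffler. The saving grace here is that finite generation of $C_i$ together with finiteness of $\Z/m$ and $G/K$ makes every $A_{i,m,K}$ finite, so Mittag-Leffler is automatic; after that, all the real content is the goodness of $G$ packaged in Proposition \ref{goodgiveslimits}, which is precisely what allows one to pass from vanishing of cohomology of $\hat G$ with finite coefficients to vanishing of inverse limits of homology of finite-index subgroups.
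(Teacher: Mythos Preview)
Your proposal is correct and follows essentially the same route as the paper's proof: write $\hat C_i$ as the inverse limit of the finite modules $A_{i,m,K}$, invoke Mittag-Leffler (via the cofinal totally ordered subsystem) to pass the limit through homology, identify $H_i(A_{\bullet,m,K})$ with $H_i(K;\Z/m)$, and then apply Proposition~\ref{goodgiveslimits}. Your extra remarks---that $\Z[G]$ is free over $\Z[K]$, that the hypothesis forces $G$ to be of type FP($n$), and why the range stops at $n-1$---are useful clarifications but do not change the argument.
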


\section{Profinite completions of 2-orbifold groups}
In this section we recall the results of Bridson, Conder and Reid \cite{BCR14} concerning Fuchsian groups (i.e.\! orbifold fundamental groups of hyperbolic 2-orbifolds), and show that they extend to the case of Euclidean 2-orbifolds.

\begin{theorem}[Theorem 1.1 of \cite{BCR14}]\label{BCR}
Let $G_1$ be a finitely-generated Fuchsian group and $G_2$ be a lattice in a connected Lie group. If $\hat{G}_1\iso\hat{G}_2$ then $G_1\iso G_2$.
\end{theorem}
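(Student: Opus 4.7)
My plan is to extract from $\hat{G}_1$ enough data to pin down the signature of the Fuchsian group $G_1$, and then argue that any lattice $G_2$ in a connected Lie group with the same profinite completion must itself be Fuchsian with the same signature. Recall that finitely generated Fuchsian groups are classified up to isomorphism by the signature data: the genus $g$, the unordered multiset of cone-point orders $\{m_1,\ldots,m_r\}$, the number of cusps or boundary components $s$, and an orientability flag. So the task splits into (a) recovering this data from $\hat{G}_1$, and (b) identifying $G_2$ as Fuchsian.

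For step (a), the abelianization $H_1(G_1;\Z)$ is a profinite invariant by the Dixon--Formanek--Poland--Ribes corollary quoted above, which constrains $g$, $s$, and the parities/2-torsion contribution of the $m_i$. The cohomological dimension of $\hat{G}_1$, which equals ${\rm cd}(G_1)$ by goodness of Fuchsian groups, separates the cocompact case (${\rm cd}=2$) from the non-cocompact case (virtually free, ${\rm cd}=1$). To recover the multiset of cone-point orders, I would count, for each $n\geq 2$, the conjugacy classes of elements of order exactly $n$ in $\hat{G}_1$: this count is visible through the finite quotients of $G_1$, and by the torsion theorem for Fuchsian groups (every torsion element is conjugate into a cone-point stabilizer) combined with Corollary \ref{torsionfree} applied to a torsion-free finite-index subgroup, it agrees with the number of $m_i$ equal to $n$. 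The rational orbifold Euler characteristic, expressible in $g$ and the $m_i$, then pins down any remaining ambiguity.

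For step (b), I would reduce the ambient Lie group $H$ via Auslander's theorem — after passing to finite index, which is harmless on profinite data by Proposition \ref{subgroupcorr} — to the case where $H$ is semisimple. The cohomological dimension of $\hat{G}_2$, which is at most $2$, together with the presence of non-abelian free subgroups (detected profinitely from finite quotients), forces $H$ to have $\mathrm{PSL}_2(\R)$ as essentially the only non-compact factor: solvable and nilpotent lattices have virtually abelian or virtually nilpotent profinite completions, incompatible with the Fuchsian profile; higher-rank semisimple lattices produce profinite completions of cohomological dimension $\geq 3$ or satisfy rigidity obstructions (Margulis arithmeticity, property (T)) inconsistent with the rich quotient structure of $\hat{G}_1$; and rank-one simple factors other than $\mathrm{PSL}_2(\R)$ act on symmetric spaces of dimension $\geq 3$, again forcing ${\rm cd}(\hat{G}_2)\geq 3$. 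This forces $G_2$ to be a lattice in $\mathrm{PSL}_2(\R)$, i.e.\ a finitely generated Fuchsian group.

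The main obstacle is step (b): no single profinite invariant rules out every exotic Lie-group possibility at once, so one must combine cohomological dimension, goodness, abelianization/homology growth, torsion counts, and external rigidity inputs case by case. Once $G_2$ is known to be Fuchsian, applying step (a) to both sides yields equal signatures, and hence $G_1 \iso G_2$.
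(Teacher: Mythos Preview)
This theorem is not proved in the paper at all: it is quoted verbatim as Theorem~1.1 of \cite{BCR14} and used as a black box. There is therefore no ``paper's own proof'' to compare your sketch against. The paper's only related argument is the short Corollary~\ref{BCR+}, which \emph{assumes} this theorem and extends the conclusion to Euclidean 2-orbifolds by a direct finite case-check on first homology and index-2 covers.

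As for the sketch itself: the two-step strategy (recover the signature profinitely, then force $G_2$ to be Fuchsian) is indeed the shape of the argument in \cite{BCR14}, but several of your steps are substantially harder than you indicate. In step~(a), ``counting conjugacy classes of elements of order exactly $n$ in $\hat G_1$'' is not obviously a profinite invariant in the form you need; one must show that torsion in $\hat G_1$ is conjugate into $G_1$ and that distinct maximal finite subgroups of $G_1$ remain non-conjugate in $\hat G_1$ --- this is exactly the content of Theorem~5.1 of \cite{BCR14} (quoted later in the present paper), and it requires real work on profinite graphs of groups, not just Corollary~\ref{torsionfree}. In step~(b), the elimination of other rank-one factors and of higher-rank lattices is far from a one-line cohomological-dimension argument: goodness is not known for arbitrary lattices, so ${\rm cd}(\hat G_2)$ is not {\it a priori} controlled by the dimension of the symmetric space, and the actual proof in \cite{BCR14} uses a mix of first Betti number growth in congruence covers, property~(T), and specific structural results. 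Your outline is a reasonable roadmap, but it is not a proof without those ingredients.
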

\begin{clly}\label{BCR+}
Let $O_1, O_2$ be closed 2-orbifolds. If $\widehat{\ofg[(O_1)]}\iso\widehat{\ofg[(O_2)]}$ then $\ofg[(O_1)]\iso\ofg[(O_2)]$. If $\oec(O_1)\leq 0$, then $O_1$ and $O_2$ are homeomorphic as orbifolds.
\begin{proof}
Since $\pi_1^\text{orb}(O_1)$ is finite if and only if the orbifold Euler characteristic is positive, we can safely ignore these cases as the profinite completion is then simply the original group. Otherwise, assume $\widehat{\pi_1^{\text{orb}}(O_1)}\iso\widehat{\pi_1^{\text{orb}}(O_2)}$.

The orbifold has a finite cover which is a surface; take such a cover of $O_1$ and the corresponding cover of $O_2$. If necessary pass to a further finite cover so  that both $O_1$ and $O_2$ are covered with degree $d$ by surfaces with isomorphic profinite completions. A surface group is determined by its first homology, which is seen by the profinite completion, so the two surfaces are homeomorphic to the same surface $\Sigma$. Orbifold Euler characteristic is multiplicative under finite covers, so $\chi^{\text{orb}}(O_1) = \chi(\Sigma)/d = \chi^{\text{orb}}(O_2)$. Hence Euclidean and hyperbolic orbifolds are distinguished from each other. 

It only remains, in light of the above theorem of Bridson-Conder-Reid, to distinguish the Euclidean 2-orbifolds from each other. The profinite completion detects first homology; a direct computation shows that this suffices to distinguish all the Euclidean 2-orbifolds except $(\sph{2}; 2, 4, 4)$ and $(\mathbb{P}^2; 2, 2)$. Recall that an isomorphism of profinite completions would induce a correspondence between the index 2 subgroups, with corresponding subgroups having the same profinite completions. But $(\mathbb{P}^2; 2, 2)$ is covered by the Klein bottle with degree 2, and the Klein bottle is distinguished from the other 2-orbifolds by its profinite completion, but does not cover $(\sph{2}; 2, 4, 4)$. So these two Euclidean orbifolds also have distinct profinite completions.
\end{proof}
\end{clly}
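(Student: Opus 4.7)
The plan is to proceed by cases according to the sign of $\oec(O_1)$. The case $\oec(O_1)>0$ is immediate, since $\ofg(O_1)$ is then finite and equals its own profinite completion; this is also why the second clause, giving orbifold homeomorphism, is restricted to $\oec\leq 0$.

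Assume $\oec(O_1)\leq 0$. The first step is to reduce to surfaces: every 2-orbifold admits a finite manifold cover, so one may choose $d$ such that both $O_1$ and $O_2$ have manifold covers of common degree $d$. By the subgroup correspondence of Proposition \ref{subgroupcorr}, a manifold cover of $O_1$ picks out a finite-index subgroup of $\ofg(O_1)$ whose counterpart in $\ofg(O_2)$ has isomorphic profinite completion. Since $H_1(-;\Z)$ is a profinite invariant of finitely generated groups, and closed surfaces are classified up to homeomorphism by their first homology, both covers are homeomorphic to a single surface $\Sigma$. Multiplicativity of orbifold Euler characteristic under finite covers then gives $\oec(O_1)=\chi(\Sigma)/d=\oec(O_2)$, so $O_1$ and $O_2$ are either both hyperbolic or both Euclidean.

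In the hyperbolic case each $\ofg(O_i)$ is a finitely generated Fuchsian group, hence a lattice in the connected Lie group $\mathrm{PSL}_2(\R)$, so Theorem \ref{BCR} yields an abstract isomorphism of the orbifold groups, and the classification of 2-orbifolds identifies $O_1\iso O_2$.

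The main obstacle is the Euclidean case, where Theorem \ref{BCR} alone will not separate the finitely many possibilities. Here I would enumerate the Euclidean 2-orbifolds, compute $H_1(\ofg(O);\Z)$ for each, and observe that first homology distinguishes all of them except a short exceptional list, which I expect to consist of the single pair $(\sph{2};2,4,4)$ and $(\mathbb{P}^2;2,2)$. These last two would be separated by applying Proposition \ref{subgroupcorr} to their index-$2$ subgroups: the Klein bottle is a double cover of $(\mathbb{P}^2;2,2)$ but not of $(\sph{2};2,4,4)$, and the Klein bottle group is itself determined among Euclidean 2-orbifold groups by its first homology, which forces the profinite completions of the two remaining candidates to differ.
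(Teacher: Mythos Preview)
Your proposal is correct and follows essentially the same route as the paper's proof: dispose of the spherical case, pass to common-degree surface covers to equate the orbifold Euler characteristics, invoke Theorem~\ref{BCR} for the hyperbolic case, and in the Euclidean case use $H_1$ plus the Klein-bottle double cover of $(\mathbb{P}^2;2,2)$ to separate the sole ambiguous pair. The only place to tighten is the passage to surface covers: the subgroup of $\ofg(O_2)$ corresponding under Proposition~\ref{subgroupcorr} to your chosen surface subgroup of $\ofg(O_1)$ need not itself be torsion-free, so (as the paper does) you should explicitly pass to a further finite-index subgroup to arrange that both corresponding subgroups are surface groups of the same index~$d$.
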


\begin{thmquote}[Theorem 5.1 of \cite{BCR14}]
Let $G$ be a finitely generated Fuchsian group. Every finite subgroup of $\hat G$ is conjugate to a subgroup of $G$, and if two maximal finite subgroups of $G$ are conjugate in $\hat G$ then they are already conjugate in $G$.
\end{thmquote}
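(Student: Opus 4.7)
The plan is to realise $G$ as the fundamental group of a graph of groups with finite cyclic vertex groups at the cone points, and then invoke profinite Bass--Serre theory to control finite subgroups of $\hat G$ through their fixed points on the associated profinite tree. Concretely, write $G = \pi_1^{\text{orb}}(O)$ and excise small open disk neighbourhoods of the cone points of $O$: this presents $G$ as the fundamental group of a graph of groups $\mathcal G$ whose vertex groups are a free group $F$ (the punctured-surface group) and the cyclic groups $\mathbb Z/m_i$, one for each cone point, with edge groups $\mathbb Z$ generated by the boundary loops. In particular, every maximal finite subgroup of $G$ is $G$-conjugate to exactly one of the vertex groups $\mathbb Z/m_i$.

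Fuchsian groups are LERF by Scott's theorem, so every vertex and edge group of $\mathcal G$ is separable in $G$. This is precisely the hypothesis needed to build a profinite Bass--Serre tree $\hat T$ on which $\hat G$ acts with vertex stabilisers equal to $\hat G$-conjugates of $\hat F$ or $\mathbb Z/m_i$, edge stabilisers conjugate to $\hat{\mathbb Z}$, and quotient $\hat G \backslash \hat T$ naturally isomorphic to $\mathcal G$. I will then invoke the profinite analogue of Bass--Serre's fixed-point theorem (due to Zalesskii--Ribes) to conclude that every finite subgroup of $\hat G$ fixes a vertex of $\hat T$ and so lies in a vertex stabiliser. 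Conjugates of $\hat F$ are torsion-free by Corollary \ref{torsionfree} applied to $F$ (good, residually finite, of cohomological dimension $1$), so any finite $\Phi \leq \hat G$ is in fact contained in a $\hat G$-conjugate of some $\mathbb Z/m_i \subseteq G$, yielding the first claim.

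For the conjugacy statement, suppose $F_1, F_2$ are maximal finite subgroups of $G$ with $g F_1 g^{-1} = F_2$ for some $g \in \hat G$. After $G$-conjugating I may assume $F_j = \mathbb Z/m_{i_j}$ are vertex groups. Each non-trivial finite cyclic subgroup fixes a unique vertex of $\hat T$, giving vertices $v_{i_1}, v_{i_2}$ fixed by $F_1, F_2$ respectively, and $g$ carries $v_{i_1}$ to $v_{i_2}$. Since $\hat G \backslash \hat T \iso \mathcal G$, this forces $i_1 = i_2 = i$; then $g$ stabilises $v_i$, so $g \in \widehat{\mathbb Z/m_i} = \mathbb Z/m_i \leq G$. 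Unwinding the initial reduction produces the required element of $G$ conjugating $F_1$ to $F_2$.

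The main obstacle is the profinite Bass--Serre machinery in the second paragraph: checking that the vertex and edge group inclusions extend to inclusions of the respective profinite completions, that the profinite tree has the predicted stabiliser and quotient structure, and that a suitable profinite fixed-point theorem is available. All of these rest crucially on the LERF property of Fuchsian groups and on the developed theory of profinite trees in the style of Zalesskii, Melnikov, and Ribes; once they are in hand, the remaining arguments are essentially formal.
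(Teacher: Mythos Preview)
The paper does not prove this statement; it is quoted from \cite{BCR14} without proof, though the surrounding text (particularly the proof of Proposition~\ref{torselts}) indicates that the argument there proceeds via amalgam decompositions and profinite Bass--Serre theory, with triangle groups handled separately by passing to finite covers that do split as amalgams.

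Your proposal follows the same general philosophy but contains a genuine gap: the graph of groups you write down is not valid. When you excise a disc neighbourhood of a cone point of order $m_i$, the boundary circle has orbifold fundamental group $\Z$, but its inclusion into the cone-point piece induces the \emph{quotient} map $\Z \twoheadrightarrow \Z/m_i$, not an injection. Bass--Serre theory (discrete or profinite) requires the edge-group maps into vertex groups to be injective; without this there is no tree action with the stabiliser structure you assert, and the entire argument collapses. A vivid witness is any hyperbolic triangle group, e.g.\ $\ofg(\sph{2};2,3,7)$: such groups have Serre's property~(FA) and admit no nontrivial splitting whatsoever, so they certainly cannot be the fundamental group of a graph of groups with vertex groups $F_2,\ \Z/2,\ \Z/3,\ \Z/7$ and infinite cyclic edge groups.

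The decomposition that actually works (and which the paper alludes to) cuts along an essential simple closed curve in the orbifold, producing an amalgam $A\ast_{\Z} B$ or an HNN extension over $\Z$ in which the edge group \emph{does} inject on both sides, because the curve is essential in each piece. The vertex groups are then smaller Fuchsian groups (e.g.\ free products $\Z/m_i\ast\Z/m_j$ for a twice-punctured sphere piece), not individual cyclic groups, and one proceeds inductively. Triangle groups do not admit such a curve, which is exactly why BCR14 must pass to a finite-index subgroup containing the torsion element of interest that does split, as the paper notes. Your single-step decomposition attempts to bypass this subtlety, but the cost is that it simply is not a graph of groups.
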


\begin{prop}\label{torselts}
Let $G$ be the fundamental group of a closed Euclidean 2-orbifold $X$. Every torsion element of $\hat G$ is conjugate to a torsion element of $G$, and if two torsion elements of $G$ are conjugate in $\hat G$ then they are already conjugate in $G$.
\begin{proof}
The second statement is a special case of the fact that a virtually abelian group  is conjugacy separable \cite{stebe72}.

We proceed on a case-by-case basis. If $X$ is a torus or Klein bottle, then $G$ is good and has finite cohomological dimension. Hence $\hat G$ is torsion free by Corollary \ref{torsionfree}. If $X=(\sph{2};2,2,2,2)$ then $G$ is the amalgamated free product of two copies of the infinite dihedral group. The result then holds by the same argument as in Theorem 5.1 of \cite{BCR14}; for a finite subgroup of the fundamental group of a graph of groups must be conjugate into one of the vertex groups, which here are the copies of $\Z/2$. The same result holds profinitely. Similarly if $X=(\mathbb{P}^2;2,2)$ then the fundamental group is an amalgamated free product.

In \cite{BCR14} the triangle orbifolds were dealt with by passing to certain finite covers which decompose as amalgams, and whose fundamental group contains the torsion element of interest. However for Euclidean orbifolds, it may happen that no such covers exist; indeed no orbifold whose fundamental group is an amalgam has any cone points of order greater than 2. We will instead exploit the fact that our triangle groups are virtually abelian. We give in detail the proof for the orbifold $X=(\sph{2};3,3,3)$; the other two triangle orbifolds are similar but involve checking more cases, so it would be uninformative to include the proofs.

Let $G=\big< a,b\,\big|\, a^3,b^3,(ab)^3\big>$. We have a short exact sequence
\[\begin{tikzcd}
1\ar{r} & N\ar{r}\ar{d}{\iso} & G\ar{r}\ar{d}{\rm id} & H_1 G \ar{r}\ar{d}{\iso} & 1\\
1 \ar{r} & \Z^2\ar{r} &G\ar{r} & (\Z/3)^2\ar{r} & 1
\end{tikzcd}\]
The subgroup $N$ is a subgroup of the translation subgroup of $G$. The translation subgroup is generated by the translations $x=a^{-1}b$, $y=ba^{-1}$. The action of conjugation is 
\[x^a=x^b=y^{-1}x^{-1},\quad x^{a^{-1}}=x^{b^{-1}}=y \quad\text{etc}\]
The subgroup $N = \big<\!\big< aba^{-1}b^{-1} \big>\!\big>$ is then generated by the elements $u=y^{-1}x$, $v=x^3$; note that $[a,b]=u^2v^{-1}$. To guide our calculations, note that an element $au^rv^s$ of $G$ acts on the plane by rotation about the centroid of a certain triangle, whose location turns out to be that of the rotation $a$ translated by $u^{r+s}v^{r/3}$ (see Figure \ref{figtorselts}). So in $G$, we have 
\[ au^r v^s = a^{u^{r+s}v^{r/3}} = a^{y^{-r-s}x^s} \]
and we expect similar equations to hold in $\hat G$. 

We have a short exact sequence for $\hat G$ induced from the one above:
\[ 1\to \hat\Z{}^2\to \hat G\to (\Z/3)^2 \to 1\]
and see that any torsion element of $\hat G$ is of the form $a^i b^j u^\rho v^\sigma$ where $i,j =0,1,2$ are not both zero and $\rho,\sigma\in\hat\Z$. For example, take $i=1,j=0$; the other cases are very similar. We now calculate 
\begin{align*}
a^{y^{-\rho-\sigma}x^{\sigma}} & = x^{-\sigma}y^{\rho+\sigma}\cdot a \cdot y^{-\rho-\sigma}x^{\sigma} \\
& = a\cdot (x^{-\sigma}y^{\rho+\sigma})^a y^{-\rho-\sigma}x^{\sigma} \\
& = a\cdot (y^{-1}x^{-1})^{-\sigma}x^{\rho+\sigma}y^{-\rho-\sigma}x^{\sigma}\\
& = a\cdot y^{-\rho}x^\rho x^{3\sigma} = a u^\rho v^\sigma\\
\end{align*}
So that torsion elements of this form are indeed conjugates of elements in $G$. The rest of the proof consists of similar calculations for other cases and can be safely omitted.
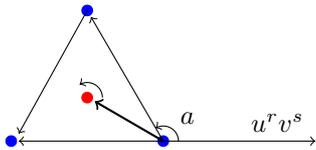
\begin{figure}[htp]
\centering
\begin{tikzpicture}
\draw[fill, blue] (0,0) circle (2pt);
\draw[->] (0.2,0) arc (0:120:0.2);
\draw (0.1,0.3) node[anchor=west]{$a$};
\draw[->] (0,0) -- node[near end,anchor=south] {$u^r v^s$} (2,0) ;
\begin{scope}[rotate=120]
\draw[->] (0,0) -- (1.9,0) ;
\draw[fill, blue] (2,0) circle (2pt);
\end{scope}
\begin{scope}[rotate=-180]
\draw[->] (0,0) -- (1.9,0) ;
\draw[fill, blue] (2,0) circle (2pt);
\end{scope}
\draw[->] (-1,1.732)--(-1.9,0.1);
\draw[fill, red] (-1,0.577) circle (2pt); 
\draw[->] (-0.8,0.577) arc (0:120:0.2);
\draw[thick,->] (0,0) -- (-0.9, 0.527);
\end{tikzpicture}
\caption{The effect of a translation followed by a rotation. The blue points are an orbit of this action; the red point shows the centre of the new rotation. The vector from $a$ to the new centre is $1/3\cdot(u^{-1}v^{-s} + (u^rv^s)^a) $.}
\label{figtorselts}
\end{figure}
\end{proof}
\end{prop}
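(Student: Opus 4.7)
The plan is to split into cases according to the seven closed Euclidean 2-orbifolds: the torus, the Klein bottle, $(\sph{2};2,2,2,2)$, $(\mathbb{P}^2;2,2)$, and the three triangle orbifolds $(\sph{2};p,q,r)$ with $1/p+1/q+1/r=1$. The second (conjugacy-separability) assertion follows in every case from Stebe's theorem \cite{stebe72} that virtually abelian groups are conjugacy separable, so I focus on lifting torsion from $\hat G$ to $G$.

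For the torus and Klein bottle $G$ is torsion-free, good, and of cohomological dimension $2$, so Corollary \ref{torsionfree} gives that $\hat G$ is torsion-free and the statement is vacuous. The orbifolds $(\sph{2};2,2,2,2)$ and $(\mathbb{P}^2;2,2)$ admit nontrivial splittings as amalgamated free products with finite (order-$2$) vertex groups and infinite cyclic edge group, and I would imitate Theorem 5.1 of \cite{BCR14} via profinite Bass--Serre theory to conclude that every finite subgroup of $\hat G$ is conjugate into a vertex group and hence into $G$.

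The core case is the three triangle orbifolds. Each such $G$ is a crystallographic group fitting into a split short exact sequence $1\to T\to G\to F\to 1$ with translation subgroup $T\cong\Z^2$ and cyclic point group $F$ of order $3$, $4$, or $6$; since $T$ has finite index, completing yields $\hat G\cong\hat T\rtimes F$ with $\hat T\cong\hat\Z^2$. Any torsion element of $\hat G$ has the form $(t,f)$ with $f\in F$; if $f=1$ then $(t,1)$ being torsion in the torsion-free $\hat\Z^2$ forces $t=0$, so I may assume $f\ne 1$. Conjugation by a translation $(s,1)\in\hat T$ sends $(t,f)$ to $\bigl(t+(I-f)(s),\,f\bigr)$, so it suffices to show $t\in\Z^2+\mathrm{im}(I-f)$ as subsets of $\hat\Z^2$. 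Because $f$ acts on $\Z^2$ without nonzero fixed vector, $I-f\colon\Z^2\to\Z^2$ has finite cokernel; flatness of $\hat\Z$ over $\Z$ yields the same finite cokernel for $I-f\colon\hat\Z^2\to\hat\Z^2$, so $\Z^2+\mathrm{im}(I-f)=\hat\Z^2$ and the required $s$ exists.

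The main anticipated obstacle is the triangle case, and specifically the bookkeeping needed to identify the point group $F$ with a concrete copy of $\Z/n$ sitting inside $G$ so that the splitting $G\cong T\rtimes F$ really persists on profinite completion. Once that is set up, the argument reduces to an explicit linear-algebraic cokernel computation for a finite cyclic action on $\Z^2$, which is routine in each of the three triangle groups.
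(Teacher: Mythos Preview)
Your overall plan is correct, and for the triangle orbifolds your argument is in fact cleaner than the paper's. One slip to fix: your description of the amalgam for $(\sph{2};2,2,2,2)$ and $(\mathbb{P}^2;2,2)$ cannot be right as stated, since an infinite cyclic edge group cannot embed in an order-$2$ vertex group. The paper splits $\ofg(\sph{2};2,2,2,2)$ as $D_\infty \ast_{\Z} D_\infty$ with infinite dihedral vertex groups and infinite cyclic edge group, and similarly for $(\mathbb{P}^2;2,2)$; the profinite Bass--Serre argument from \cite{BCR14} then applies exactly as you intend. (In fact your semidirect-product argument for the triangle groups would also cover $(\sph{2};2,2,2,2)$, whose point group is $\Z/2$ acting by $-I$.)

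For the triangle orbifolds the paper takes a more hands-on route. Rather than working with the full translation subgroup $T$, it uses the commutator subgroup $N\leq T$ (so the quotient is $(\Z/3)^2$ rather than $\Z/3$ in the $(3,3,3)$ case), writes a general torsion element of $\hat G$ in coordinates $a^i b^j u^\rho v^\sigma$ with $\rho,\sigma\in\hat\Z$, and then exhibits an explicit conjugator---motivated by the planar geometry of where the centre of the composite rotation lies---carrying it back into $G$. Your argument replaces all of this by one structural observation: the split extension $G\cong\Z^2\rtimes F$ completes to $\hat G\cong\hat\Z^2\rtimes F$, and since each nontrivial $f\in F$ acts on $\Z^2$ without nonzero fixed vectors, $I-f$ has finite cokernel; hence $\Z^2$ surjects onto $\hat\Z^2/(I-f)\hat\Z^2$ and every $(t,f)$ with $f\neq 1$ is $\hat T$-conjugate into $\Z^2\rtimes F=G$. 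This is shorter, treats all three Euclidean triangle groups uniformly with no case-by-case computation, and makes the mechanism transparent. The paper's explicit calculation, by contrast, produces a concrete conjugator and ties the algebra directly to the geometry of the tiling, at the cost of more bookkeeping.
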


\section{Seifert Fibre Spaces}
We first recall some information about the invariants of a \SFS{} before moving on to profinite matters. For a more comprehensive introduction to Seifert fibre spaces see \cite{brinnotes} and \cite{scott83}.

Recall that a fibred solid torus is formed as a quotient of $\D^2\times[0,1]$ by identifying the two end discs by a rotation by $2\pi q/p$ where $p$, $q$ are coprime integers, called the {\em fibre invariants} of the fibred solid torus. The foliation of $\D^2\times[0,1]$ by lines $\{x\}\times[0,1]$ descends to a foliation of the torus by circles. Such pieces form a local model for a \SFS. Note that the quotient of a fibred solid torus obtained by collapsing each fibre  naturally has an orbifold structure, where the image of the exceptional fibre is a cone point of order $p$. After fixing an orientation for the disc and fibre, the number $q$ becomes well-defined in the range $0<q<p$; if no orientations are chosen, it is well-defined only in the range $0<q\leq p/2$. To give the standard presentation for the fundamental group, it is conventional to define the {\em Seifert invariants} of the exceptional fibre to be $(\alpha, \beta)$ where $\alpha = p$, and $\beta q \equiv 1 \text{ mod }p$.

The orbifold quotients of neighbourhoods of each fibre piece together to form the quotient of the whole manifold $M$ by the foliation; this is the {\em base orbifold} $O$ of the \SFS. This quotient induces a short exact sequence
\[ 1\to <\!h\!>\to \pi_1 M\to\ofg O\to 1\]
where \ofg[O] is the orbifold fundamental group, and $h$ is the element of $\pi_1$ represented by a regular fibre. This subgroup $<\! h\!>$ may be finite or infinite cyclic, and is either central (if $O$ is orientable) or $\pi_1 M$ has an index 2 subgroup which contains $h$ as a central element. 

The final invariant has various different formulations; see \cite{brinnotes}, \cite{scott83}, \cite{NR78}. It is in some sense the `obstruction to a section', and coincides with the Euler number of the fibration when there are no exceptional fibres and the \SFS{} is a {\it bona fide} fibre bundle. In general it is still called the Euler number $e$ of the \SFS{}, and is a rational number. The key properties of the Euler number are the above behaviour when there are no exceptional fibres, and the following naturality property:
\begin{prop}\label{eulernaturality}
If $\tilde M\to M$ is a degree $d$ cover, where the base orbifold cover $\tilde O\to O$ has degree $m$ and a regular fibre of $\tilde M$ covers a regular fibre of $M$ with degree $l$
\[\begin{tikzcd}
1 \arrow{r} &  <\! \tilde h \!> \arrow{r} \arrow{d}{l} & \pi_1 \tilde M \arrow{r} \arrow{d}{d} & \ofg[\tilde O] \arrow{r} \arrow{d}{m} & 1 \\
1 \arrow{r}  &  <\! h \!> \arrow{r} & \pi_1 M \arrow{r} & \ofg[O] \arrow{r} & 1
\end{tikzcd}\]
then 
\[ e(\tilde M) = \frac{m}{l} \cdot e(M) \]
\end{prop}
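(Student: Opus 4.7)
The plan is to reduce to the case where $M$ (and hence $\tilde M$) is an honest $\sph{1}$-bundle over a closed surface, where $e(M)$ agrees with the classical Euler number of a circle bundle and the formula becomes the familiar naturality of the Euler class. Every Seifert fibre space admits such a finite regular cover, obtained by pulling back along a surface cover of $O$ (which exists because $\ofg[(O)]$ is virtually torsion-free, so a torsion-free finite-index normal subgroup corresponds to an orbifold cover that is a genuine surface).

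For the bundle case, any cover $\tilde M \to M$ of $\sph{1}$-bundles factors, up to isomorphism, into two elementary operations: (i) pulling the bundle back along a degree $m$ cover of the base surface, which multiplies the Euler number by $m$ by naturality of the Chern class, and (ii) taking an $l$-fold cyclic cover along the $\sph{1}$-fibres, which divides the Euler number by $l$, since an $l$-fold fibre cover corresponds to extracting an $l$-th root of the associated complex line bundle. Composing these yields $e(\tilde M) = (m/l)\, e(M)$ when both source and target are bundles.

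For the general case, I would form a tower $N \to \tilde M \to M$ with $N$ a common finite regular cover that is an $\sph{1}$-bundle. If one strengthens the bundle case to a hybrid statement applying whenever the covering space is a bundle (independently of the base), then the formulas for $N \to M$ and $N \to \tilde M$ can be divided to yield the identity for $\tilde M \to M$. This hybrid step is the main obstacle, and I would prove it by direct computation using the standard presentation $e(M) = -b - \sum_i \beta_i/\alpha_i$ in terms of Seifert invariants: one tracks how an exceptional fibre of $M$ with invariants $(\alpha_i,\beta_i)$ is covered in $N$ by regular or exceptional fibres whose multiplicities are determined by $m$, $l$, and $\gcd(\alpha_i, l)$, and verifies by case analysis that $-b - \sum_i \beta_i/\alpha_i$ transforms by the scalar $m/l$. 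An alternative route is to interpret $e(M)$ as a rational Euler class in $H^2(O;\Q)$ and invoke a projection formula for orbifold covers, at the cost of setting up rational orbifold cohomology more carefully.
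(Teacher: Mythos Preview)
The paper does not actually prove this proposition: it is stated as one of the ``key properties'' of the Euler number, with references to \cite{brinnotes}, \cite{scott83}, and \cite{NR78} for the definition and basic facts, and no proof is supplied. So there is nothing to compare your argument against within the paper itself.

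That said, your outline is the standard route and is essentially correct. A small clarification on the bundle step: given an arbitrary cover $\tilde M \to M$ of honest $\sph{1}$-bundles with base degree $m$ and fibre degree $l$, you should note that it \emph{always} factors through the pullback bundle $M' \to \tilde\Sigma$ (by the universal property of the pullback), and the resulting map $\tilde M \to M'$ is then automatically an $l$-fold fibrewise cover; the divisibility condition $l \mid m\, e(M)$ needed for such a fibre cover to exist is guaranteed precisely because $\tilde M$ was handed to you. For the hybrid step (bundle covering a general Seifert space), your proposed direct computation with the Seifert invariants is exactly what is done in the references, particularly Neumann--Raymond; indeed, that naturality formula is essentially built into their axiomatic characterisation of $e$, so one could equally just cite it.
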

The Euler number has no well-defined sign {\it a priori}; given a choice of orientation on $M$, $e$ acquires a sign, and reversing the orientation (by flipping the direction along the fibres) changes this sign. This is consistent with the interpretation as the obstruction to a section; when there are no exceptional fibres, circle bundles with orientable total space are classified by elements of $H^2(\Sigma;\Z)$, where the \Z{} coefficients are twisted by the orientation homomorphism for $\Sigma$; this group is \Z{} whether or not $\Sigma$ is orientable.

The vanishing of the Euler number gives important topological information:
\begin{prop}
Let $M$ be a \SFS. The Euler number $e(M)$ vanishes if and only if $M$ is virtually a surface bundle over the circle with periodic monodromy.
\end{prop}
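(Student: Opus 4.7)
The plan is to prove the two implications separately, in both cases reducing to the case of a trivial circle bundle $\Sigma\times\sph{1}$ over a surface, and invoking the naturality of the Euler number (Proposition~\ref{eulernaturality}).

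For the forward direction, assume $e(M)=0$. Choose a finite manifold cover $\Sigma\to O$ of the base orbifold by a surface, passing to a further double cover so that $\Sigma$ is orientable if necessary, and pull this back to obtain a finite cover $\tilde M\to M$. Since $\tilde M$ has no exceptional fibres, it is a genuine oriented circle bundle over $\Sigma$, and by naturality $e(\tilde M)=(m/l)\,e(M)=0$. Oriented $\sph{1}$-bundles over an orientable surface are classified by $H^2(\Sigma;\Z)\iso\Z$ with the Euler number as classifying invariant, so $\tilde M\iso\Sigma\times\sph{1}$. This is plainly a $\Sigma$-bundle over $\sph{1}$ with identity monodromy, which is periodic of order~$1$, so $M$ is virtually a surface bundle with periodic monodromy.

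For the converse, suppose $M$ has some finite cover $\tilde M$ that is a $\Sigma$-bundle over $\sph{1}$ with periodic monodromy $\phi$ of order $n$. Pulling back along the $n$-fold connected cyclic cover of $\sph{1}$ kills $\phi$ and produces a further cover homeomorphic to $\Sigma\times\sph{1}$. This product has Euler number $0$, since it is the trivial circle bundle and admits an obvious section. Applying Proposition~\ref{eulernaturality} along the tower $\Sigma\times\sph{1}\to\tilde M\to M$, and noting that the scaling factor $m/l$ is a nonzero rational at each step, forces $e(M)=0$.

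The main point requiring care is compatibility of Seifert fibrations on the various covers. In the converse, one must verify that the Seifert fibration on $\tilde M$ inherited from $M$ coincides (up to isotopy) with the mapping-torus fibration coming from the surface-bundle description, since the naturality formula compares Euler numbers of corresponding Seifert structures. For Seifert manifolds with hyperbolic base orbifold the Seifert fibration is unique and the issue vanishes; the remaining cases — Lens spaces, $\sph{2}\times\sph{1}$ and flat 3-manifolds such as $T^3$ — admit multiple Seifert fibrations but can be dispatched by direct inspection, every Seifert fibration on these manifolds having Euler number $0$. A related orientation subtlety when $O$ is non-orientable is resolved by an initial passage to the orientation double cover of $M$.
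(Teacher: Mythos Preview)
The paper does not prove this proposition; it is stated without proof in the preliminary section on Seifert fibre spaces as a standard background fact (the section refers the reader to \cite{brinnotes} and \cite{scott83} for such material). There is therefore no proof in the paper to compare your argument against.

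For what it is worth, your argument is essentially the standard one and is correct in outline. One small inaccuracy in your final paragraph: lens spaces other than $\sph{2}\times\sph{1}$ should not appear in the list of exceptional cases for the converse direction, since they have $\sph{3}$-geometry and are never virtually surface bundles over the circle (their universal cover $\sph{3}$ admits no map to $\sph{1}$ with connected fibre). The genuine residual cases are the $\sph{2}\times\R$-manifolds and the flat manifolds, and for those your claim that every Seifert fibration has $e=0$ is correct, because the geometry of a closed Seifert fibre space determines whether $e$ vanishes (Figure~\ref{whichgeom}): a manifold with $\sph{2}\times\R$ or $\E^3$ geometry cannot carry a Seifert fibration with $e\neq 0$, as that would force a different geometry.
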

Finally, we can state the classification results of \SFS{}s and characterisations of their fundamental groups from these invariants.
\begin{prop}\label{SFSinvariants}
A \SFS{} is uniquely determined by the symbol
\[ (b, \Sigma; (\alpha_1, \beta_1),\ldots, (\alpha_r, \beta_r)) \]
where
\begin{itemize}
\item $b\in\Z$ and $e = -(b+ \sum \beta_i/\alpha_i)$;
\item $\Sigma$ is the underlying surface of the base orbifold;
\item $(\alpha_i, \beta_i)$ are the Seifert invariants of the exceptional fibres, and $0<\beta_i<\alpha_i$ are coprime.
\end{itemize}
If $\Sigma$ is closed and orientable of genus $g$, $\pi_1 M$ has presentation
\begin{multline*}
\big< a_1,\ldots, a_r, u_1, v_1, \ldots, u_g, v_g, h\,\big|\\ h\in Z(\pi_1 M),\, a_i^{\alpha_i} h^{\beta_i}, \,a_1\ldots a_r[u_1, v_1]\ldots [u_g, v_g] = h^b \big>  
\end{multline*}
If $\Sigma$ is closed and non-orientable of genus $g$, then $\pi_1 M$ has presentation
\begin{multline*}
\big< a_1,\ldots, a_r, v_1, \ldots,v_g, h\,\big|\\ h^{a_i} = h,\, h^{v_i} = h^{-1},\, a_i^{\alpha_i} h^{\beta_i}, \,a_1\ldots a_r v_1^2\ldots v_g^2 = h^b \big>  
\end{multline*}
\end{prop}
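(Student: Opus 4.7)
The plan is to split this into two parts: the classification statement (that the symbol uniquely determines $M$), and the group presentation. Both are classical and I would cite \cite{brinnotes} or \cite{scott83} for many of the details, but it is worth sketching the construction since we will use the explicit form of the generators later.

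For the construction, I would proceed by assembling $M$ from standard pieces. Pick $r$ marked points in $\Sigma$ and an additional `basepoint' disk, and let $\Sigma'$ be the surface obtained by removing small open disks around these $r+1$ points. Over $\Sigma'$ take the product circle bundle $\Sigma' \times \sph{1}$ (or, if $\Sigma$ is non-orientable, the unique orientable circle bundle, which one determines so as to force $\pi_1 M$ to agree with the claimed presentation in the non-orientable case). Glue in $r$ fibred solid tori with fibre invariants chosen so that the Seifert invariants are $(\alpha_i, \beta_i)$, and glue the final solid torus by the $2\times 2$ matrix encoding the integer $b$. Using Proposition \ref{eulernaturality} together with the formula $e = -\sum \beta_i/\alpha_i$ for the contribution of the exceptional fibres and $e = -b$ for the twisting of the background bundle, one verifies $e(M) = -(b + \sum \beta_i/\alpha_i)$.

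For uniqueness of the symbol, the base orbifold $O$ (and hence $\Sigma$ and the orders $\alpha_i$) is intrinsic to the Seifert fibration. Once an orientation on $M$ is chosen, each fibred solid torus neighbourhood of an exceptional fibre determines $\beta_i$ in the normalised range $0 < \beta_i < \alpha_i$. Finally $b$ is recovered from the Euler number via $b = -e(M) - \sum \beta_i/\alpha_i$, which must be an integer because of the normalisation.

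For the presentations, I would apply Van Kampen to the decomposition above. The orbifold fundamental group of $O$ has the expected presentation with generators $a_1,\dots,a_r$ (loops around cone points) and the surface generators, subject to $a_i^{\alpha_i}=1$ and the surface-type relation $a_1\cdots a_r\cdot[u_1,v_1]\cdots[u_g,v_g]=1$ (orientable case) or $a_1\cdots a_r\cdot v_1^2\cdots v_g^2=1$ (non-orientable case). The short exact sequence $1\to\langle h\rangle\to\pi_1 M\to\ofg[O]\to 1$ says we lift these generators and record, for each original relation, the power of $h$ by which the lift differs from the identity. The relation $a_i^{\alpha_i}=1$ lifts to $a_i^{\alpha_i}h^{\beta_i}=1$ (this is exactly the gluing of the fibred solid torus). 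The global surface relation lifts to $\cdots=h^b$, which is the definition of $b$. The commutation relation $[h,a_i]=[h,u_j]=[h,v_j]=1$ in the orientable case comes from the fact that $h$ is central; in the non-orientable case, conjugation of $h$ by a generator $v_j$ traversing an orientation-reversing loop in $\Sigma$ inverts $h$, giving $h^{v_j}=h^{-1}$, while $h^{a_i}=h$ still holds because small loops around cone points are orientation-preserving in $\Sigma$.

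The main bookkeeping obstacle is keeping track of sign and orientation conventions (choice of normalisation $0<\beta_i<\alpha_i$, sign of $e$, the relation $\beta q\equiv 1\pmod{\alpha}$ between the Seifert invariants and the fibre invariants, and the orientation-twisting action of $v_j$ on $h$), and verifying that the choices in the construction are consistent across the two cases (orientable versus non-orientable base). Once those are fixed, both the classification and the presentations follow from Van Kampen applied to the standard decomposition together with Proposition \ref{eulernaturality}.
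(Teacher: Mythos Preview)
The paper does not actually prove this proposition; it is stated as a classical fact, with the reader referred to \cite{brinnotes} and \cite{scott83} for details. Your sketch---building $M$ from a trivial (or twisted) circle bundle over a punctured $\Sigma$ and filling in fibred solid tori, then reading off the presentation via Van Kampen and the short exact sequence $1\to\langle h\rangle\to\pi_1 M\to\ofg[O]\to 1$---is exactly the standard argument found in those references, and is consistent with how the paper uses the result (in particular the identification of the cohomology class $\eta_G$ in Section~\ref{centralexts} relies on precisely this ``lift each relation and record the power of $h$'' description).

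One small caution: your appeal to Proposition~\ref{eulernaturality} to compute $e(M)$ is slightly backwards. That proposition gives the behaviour of $e$ under finite covers, not the additive formula $e=-(b+\sum\beta_i/\alpha_i)$; the latter is really the \emph{definition} of $e$ in the presence of exceptional fibres (chosen so that the naturality property holds), rather than something one derives from naturality. Otherwise your outline is fine and matches the treatment in the cited sources.
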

When the \SFS{} has boundary, we have similar presentations without the last relation; the base orbifold group is just a free product of (finite or infinite) cyclic groups. In these presentations, $h$ represents the regular fibre; killing $h$ gives a presentation of the orbifold fundamental group of the base. Note also that reversing the orientation of the fibre $h$ and `renormalising' to get the $\beta_i$ back into the correct range, there is an ambiguity in the above symbol for a \SFS, under the transformation
\[ (b,\Sigma; (\alpha_1, \beta_1),\ldots, (\alpha_r, \beta_r)) \to
(-b-r,\Sigma; (\alpha_1, \alpha_1-\beta_1),\ldots, (\alpha_r, \alpha_r-\beta_r))\] 
which flips the sign of $e$. When the orbifold is orientable, this will be the only ambiguity provided there is a unique \SFS{} structure on the manifold.
\begin{prop}
If a closed manifold $M$ has two distinct \SFS{} structures, then it is covered by \sph{3}, $\sph{2}\times\R$, or \sss{1}{1}{1}. 
\end{prop}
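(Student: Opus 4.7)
The plan is to recover the Seifert fibration from algebraic data of $\pi_1 M$ — specifically, the fibre subgroup $\langle h\rangle$ — and identify precisely when this recovery fails. Two Seifert structures yield two normal (virtually central) cyclic subgroups of $\pi_1 M$ playing the role of $\langle h\rangle$; I aim to show these must coincide unless $\pi_1 M$ has an abnormally abelian structure, after which Proposition \ref{eulernaturality} and the classification of Proposition \ref{SFSinvariants} force the two Seifert structures themselves to coincide, up to the permitted orientation ambiguity.

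First I would dispose of the small cases. If $\pi_1 M$ is finite, $M$ is a spherical space form and so is covered by $\sph{3}$. If $\pi_1 M$ is infinite but virtually cyclic, $M$ has $\sph{2}\times\R$ geometry and is covered by $\sph{2}\times\R$. Otherwise $\pi_1 M$ is infinite and not virtually cyclic, and I pass to a common finite cover $M'$ of $M$ — which inherits both putative Seifert structures by pulling back the circle foliations — chosen so that one of the structures presents $M'$ as a genuine circle bundle over an orientable closed surface $\Sigma$ with orientable total space, giving a central extension
\[ 1 \to \Z\langle h\rangle \to \pi_1 M' \to \pi_1\Sigma \to 1. \]
If $\Sigma$ has genus $g\geq 2$, then $\pi_1\Sigma$ is centreless, so $\langle h\rangle = Z(\pi_1 M')$; the pulled-back second structure gives another central cyclic subgroup which must again equal $Z(\pi_1 M')$, so the two fibre classes agree on $M'$. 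Descending via Proposition \ref{eulernaturality} yields equality of fibre classes on $M$, hence equality of Seifert invariants up to the orientation ambiguity — contradicting distinctness. The remaining subcase is $g=1$, so $\pi_1 M'$ is a central $\Z$-extension of $\Z^2$ classified by its Euler number $e$. If $e=0$ then $\pi_1 M'\cong\Z^3$ and $M'=\sss{1}{1}{1}$, placing $M$ in the third exceptional class. If $e\neq 0$ then $\pi_1 M'$ is a Heisenberg lattice whose centre is still $\langle h\rangle$ (since $[\pi_1 M',\pi_1 M']=\langle h^e\rangle$), and the centre argument applies as before.

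The main obstacle is bridging algebraic coincidence of the fibre subgroups in $M'$ back to geometric coincidence of the original Seifert structures on $M$. This requires checking that (i) a cover adapted to one Seifert structure also receives the pullback of the second, (ii) equality of fibre classes upstairs descends cleanly via Proposition \ref{eulernaturality}, and (iii) the orientation and base-non-orientability ambiguities of Proposition \ref{SFSinvariants} cannot package two genuinely distinct Seifert structures as the same symbol. Non-orientable base orbifolds can be handled at the outset by passing to the orientation double cover of $O$, after which the fibre becomes virtually central and the entire argument above applies.
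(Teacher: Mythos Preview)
The paper does not prove this proposition: it is one of several classical facts about Seifert fibre spaces stated without proof in Section~5, with the reader referred to \cite{brinnotes} and \cite{scott83}. There is therefore no in-paper argument to compare against. Your overall strategy --- pin down the fibre subgroup algebraically as the unique maximal virtually central normal cyclic subgroup --- is correct in spirit, and is in fact the discrete-group shadow of what the paper itself does profinitely in Theorem~\ref{fibrepres}.

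That said, your execution has a real gap at the descent step. You establish that the two fibre subgroups agree \emph{in $\pi_1 M'$} (both equal the centre), and then write ``Descending via Proposition~\ref{eulernaturality} yields equality of fibre classes on $M$''. But Proposition~\ref{eulernaturality} is purely the multiplicativity of the Euler number under covers; it says nothing about pushing equality of subgroups back down. What you would need is: if two normal infinite cyclic subgroups of $\pi_1 M$ have equal intersection with a finite-index subgroup, then they are equal --- and this is false in general (commensurable need not mean equal). The standard argument avoids the cover entirely: work in $\pi_1 M$, quotient by $\langle h_1\rangle$ to get $\ofg O_1$, and note that the image of $\langle h_2\rangle$ is a normal (virtually central) cyclic subgroup of $\ofg O_1$; for hyperbolic $O_1$ no such nontrivial subgroup exists, forcing $\langle h_2\rangle\subseteq\langle h_1\rangle$ and hence equality by maximality. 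This is exactly the manoeuvre in the proof of Theorem~\ref{fibrepres}. You also gloss over why ``same fibre subgroup'' implies ``same Seifert structure up to isotopy''; that step needs either Waldhausen's work on Haken manifolds or the geometric classification, and is not a consequence of Proposition~\ref{SFSinvariants} alone.
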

\begin{prop}
If $h$ is a regular fibre, then the subgroup $<\!h\!>$ is infinite cyclic unless $M$ is covered by \sph{3}.
\end{prop}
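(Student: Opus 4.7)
The plan is to argue by contradiction: suppose $\langle h\rangle$ is finite while $M$ is not covered by $\sph{3}$. Since a closed orientable 3-manifold with finite fundamental group is covered by $\sph{3}$ (by geometrisation and the Poincar\'e conjecture), the second hypothesis is equivalent to $\pi_1 M$ being infinite. From the defining short exact sequence
\[ 1\to\langle h\rangle\to\pi_1 M\to\ofg[O]\to 1 \]
and finiteness of $\langle h\rangle$, the quotient $\ofg[O]$ must then also be infinite; in particular $O$ is neither a spherical nor a bad 2-orbifold, so $\oec(O)\leq 0$.

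I would then reduce to a Seifert fibration with an honest surface base. By Selberg's lemma applied to the (Fuchsian or Euclidean) group $\ofg[O]$, the orbifold $O$ admits a finite orbifold cover by a closed surface $\Sigma$ with $\chi(\Sigma)\leq 0$. The associated finite cover $M'\to M$ is a \SFS{} over $\Sigma$; since $\Sigma$ has no cone points, $M'$ has no exceptional fibres and $M'\to\Sigma$ is a genuine $\sph{1}$-bundle. By the naturality behind Proposition \ref{eulernaturality}, a regular fibre $\tilde h$ of $M'$ maps under the inclusion $\pi_1 M'\hookrightarrow\pi_1 M$ to $h^l$, where $l$ is the fibre covering degree; in particular $\langle\tilde h\rangle$ is still finite.

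The key step is then the long exact sequence of homotopy groups for the fibre bundle $\sph{1}\to M'\to \Sigma$. Since $\chi(\Sigma)\leq 0$, the closed surface $\Sigma$ is aspherical, so $\pi_2\Sigma = 0$, and the portion
\[ 0 = \pi_2\Sigma\to\pi_1\sph{1}\to\pi_1 M' \]
shows that $\pi_1\sph{1}\iso\Z$ injects into $\pi_1 M'$, with image exactly $\langle\tilde h\rangle$. This contradicts the finiteness of $\langle\tilde h\rangle$. The only real obstacle is the classification-based reduction to a surface base: one needs that the ``bad'' 2-orbifolds all have finite orbifold fundamental group, so that the infiniteness of $\ofg[O]$ rules them out and Selberg's lemma can be applied to produce the surface cover. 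Once this is in hand, the fibration argument is immediate.
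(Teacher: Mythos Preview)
The paper does not actually prove this proposition; it is stated without proof as standard background on Seifert fibre spaces, with the reader referred to Brin's notes and Scott's survey for such facts. So there is no ``paper's proof'' to compare against.

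Your argument is correct and is essentially the standard one: pass to a finite cover whose base is an honest surface of non-positive Euler characteristic, and then use the long exact sequence of the resulting genuine $\sph{1}$-bundle together with $\pi_2\Sigma=0$ to force the fibre subgroup to be infinite cyclic. Two minor remarks. First, invoking geometrisation to convert ``$\pi_1 M$ infinite'' into ``$M$ not covered by $\sph{3}$'' is heavier than necessary here: for Seifert fibre spaces the equivalence of finite fundamental group with $\sph{3}$-geometry is classical and predates Perelman (indeed it is implicit in the table of Figure~\ref{whichgeom}). Second, in the cover you build the fibre degree is in fact $l=1$, since the subgroup of $\pi_1 M$ you pass to is the full preimage of $\pi_1\Sigma\leq\ofg[O]$ and therefore already contains all of $\langle h\rangle$; so $\tilde h$ is just $h$. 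Neither point affects the validity of your proof.
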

\begin{prop}
A manifold $M$ is Seifert fibred if and only if it has one of the six geometries in Figure \ref{whichgeom}. The geometry is determined by the Euler characteristic of the base orbifold and the Euler number of $M$.
\begin{figure}[ht]
\centering
\begin{tabular}{c|c|c|c|}
& $\oec[]>0$ & $\oec[]=0$ & $\oec[]<0$ \\
\hline
$e=0$ & $\sph{2}\times\R$ & $\E^3$ & \HR \\
\hline
$e\neq 0$ & \sph{3} & \Nil & \SLR \\
\hline
\end{tabular} 
\caption{The geometry of a \SFS{} is determined by the base orbifold and Euler number}
\label{whichgeom}
\end{figure}
\end{prop}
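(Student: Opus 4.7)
This proposition is essentially a well-known theorem due to Scott \cite{scott83}, so my plan is to sketch the main verifications rather than reprove geometrization. The strategy is to split into two statements: (a) each of the six geometries yields Seifert fibered closed manifolds falling into the stated cell of the table, and (b) every closed Seifert fibered 3-manifold admits a geometric structure of one of these six types.

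For direction (a), I would exhibit a canonical Seifert fibration on each model. For the three product geometries $\sph{2}\times\R$, $\E^3$, $\HR$, the circle fibration of a closed quotient arises (after passing to a finite cover if needed) from the $\R$-factor, and the base orbifold inherits a $\sph{2}$, $\E^2$, or $\H^2$ structure respectively; since the fibration is (virtually) a product, the Euler number vanishes. For the three twisted geometries $\sph{3}$, $\Nil$, $\SLR$, the fibers come from the one-parameter center of the isometry group (the Hopf fibration in the case of $\sph{3}$); the resulting fibration is a nontrivial central extension, so $e\neq 0$. In each of these six cases, a short computation of the base orbifold gives that $\oec$ has the stated sign, determined by the curvature of the 2-dimensional model the base inherits.

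For direction (b), I would start with a closed Seifert fibered manifold $M$ with short exact sequence
\[ 1 \to \langle h\rangle \to \pi_1 M \to \ofg O \to 1. \]
By Corollary~\ref{BCR+} (or direct classical arguments) the base orbifold $O$ admits a geometric structure of curvature matching the sign of $\oec[(O)]$. One then lifts this structure together with the circle action to a $\pi_1 M$-invariant geometry on the universal cover of $M$. When $e=0$, Proposition~\ref{eulernaturality} shows that $M$ is virtually a product bundle, so the lifted geometry is a product and one lands in the top row of the table; when $e\neq 0$, the fibration is essentially twisted and one obtains a central extension of the model group for the base, producing the bottom row.

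The main obstacle — and the reason this is really a citation rather than a short argument — is the converse (b). Producing the actual geometric structure on $M$ (not merely on $O$) requires either the full geometrization machinery or Scott's direct construction of the six model isometry groups and an identification of the extension class with $e$ up to the naturality of Proposition~\ref{eulernaturality}. The sign verifications in the table, by contrast, are essentially a sanity check performed on the standard models in step (a).
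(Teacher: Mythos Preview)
The paper does not prove this proposition at all: it is stated as background material, with the surrounding discussion referring the reader to \cite{scott83} and \cite{brinnotes}. Your instinct to treat it as a citation to Scott rather than something to be reproved here is therefore exactly what the paper does, and your sketch of how the classification goes is a reasonable summary of Scott's argument.

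One small correction: your invocation of Corollary~\ref{BCR+} is misplaced. That corollary is a profinite rigidity statement for 2-orbifold groups and says nothing about geometric structures on $O$; the fact that a 2-orbifold admits a constant-curvature metric with sign matching $\oec$ is purely classical (uniformization for orbifolds). Your parenthetical ``or direct classical arguments'' is the correct justification, so just drop the reference to Corollary~\ref{BCR+}.
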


\section{Theorems}
In this section we prove the following result, which with the work of \cite{WZ14} gives the theorem in the introduction.
\begin{theorem}\label{mainthm}
Let $M_1, M_2$ be (closed orientable) \SFS{}s. Then $\widehat{\pi_1 M_1}\iso\widehat{\pi_1 M_2}$ if and only if one of the following holds:
\begin{itemize}
\item $\pi_1 M_1 \iso \pi_1 M_2$, so that unless they have $\sph{3}$-geometry,  $M_1$ and $M_2$ are homeomorphic; 
\item $M_1$, $M_2$ have the geometry \HR{}, where for some hyperbolic surface $S$ and some periodic automorphism $\phi$ of $S$, the 3-manifolds $M_1$ and $M_2$ are $S$-bundles over the circle with monodromy $\phi$ and $\phi^\kappa$ respectively, where $\kappa$ is coprime to ${\rm order}(\phi)$.
\end{itemize}
\end{theorem}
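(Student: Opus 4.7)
The plan is to profinitely reconstruct the Seifert invariants $(b,\Sigma;(\alpha_i,\beta_i))$ of Proposition \ref{SFSinvariants} as far as possible, and then analyse what ambiguity remains. The reverse implication is immediate for the first bullet; for the second one exhibits an explicit isomorphism $\widehat{\pi_1 M_\phi}\iso\widehat{\pi_1 M_{\phi^\kappa}}$ by noting that, since $\kappa$ is coprime to $n=\mathrm{ord}(\phi)$, it lifts to a unit in $\hat\Z$ which provides an automorphism of the $\hat\Z$-fibre subgroup intertwining the two monodromy actions on $\widehat{\pi_1 S}$.

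For the forward direction, let $\Gamma_i=\widehat{\pi_1 M_i}$ and fix an isomorphism $\Phi\colon\Gamma_1\to\Gamma_2$. After passing to an index-at-most-two subgroup I may assume each base orbifold is orientable, so $\langle h_i\rangle$ is central in $\pi_1 M_i$. Because $\pi_1 M$ is LERF (Scott) and good, $\overline{\langle h_i\rangle}\iso\widehat{\langle h_i\rangle}$ embeds as a central procyclic subgroup of $\Gamma_i$. I would then show this closure is characteristic, for example as the unique maximal closed normal procyclic subgroup whose quotient has cohomological dimension $2$, so that $\Phi$ carries $\overline{\langle h_1\rangle}$ to $\overline{\langle h_2\rangle}$ and induces an isomorphism $\widehat{\ofg[(O_1)]}\iso\widehat{\ofg[(O_2)]}$. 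Corollary \ref{BCR+} then gives a homeomorphism $O_1\cong O_2$, the finitely many spherical-base cases being treated directly.

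The cone-point data matches next. Theorem 5.1 of \cite{BCR14} combined with Proposition \ref{torselts} shows that conjugacy classes of maximal finite cyclic subgroups of $\widehat{\ofg[(O)]}$ correspond to those in $\ofg[(O)]$, so $\Phi$ preserves the multiset $\{\alpha_j\}$. The restriction of $\Phi$ to $\overline{\langle h_1\rangle}\iso\hat\Z$ is multiplication by some unit $u\in\hat\Z^\times$. Lifting each cone-point generator to $\Gamma_i$ and comparing the defining relations $a_j^{\alpha_j}=h^{-\beta_j}$ and $\prod_j a_j\cdot[\text{surface part}]=h^b$ on either side yields $\beta_j(M_2)\equiv u\,\beta_j(M_1)\pmod{\alpha_j}$ and an analogous constraint on $b$, giving overall a relation $e(M_2)=u\cdot e(M_1)$ interpreted in the appropriate profinite sense.

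The decisive and hardest step is interpreting this last equation topologically. If $e(M_1)\neq 0$, then $u=e(M_2)/e(M_1)\in\Q\cap\hat\Z^\times=\{\pm1\}$, so the Seifert data agree up to the orientation ambiguity of Proposition \ref{SFSinvariants} and $\pi_1 M_1\iso\pi_1 M_2$. When $e(M_1)=0$, the vanishing-Euler-number criterion identifies $M_i$ as virtually a surface bundle with periodic monodromy, of geometry $\sph{2}\times\R$, $\E^3$, or \HR. The first two geometries each support only finitely many manifolds and are separated by first homology (detected profinitely via the Dixon--Formanek--Poland--Ribes corollary) together with Corollary \ref{BCR+}. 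The \HR{} case is the main obstacle: passing to a common finite cover which is an honest $\Sigma$-bundle $M_\phi$, the profinite isomorphism forces the two monodromies to agree up to the ambiguity $\phi\mapsto\phi^u$ inside the action on $\widehat{\pi_1\Sigma}$; since $\phi$ has finite order $n$ only $u\bmod n$ matters, and surjectivity of $\hat\Z^\times\twoheadrightarrow(\Z/n)^\times$ yields precisely the exponents $\kappa$ with $\gcd(\kappa,n)=1$, completing the classification.
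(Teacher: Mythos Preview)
Your outline tracks the paper's strategy up to the point where you compare Seifert invariants, but the step where you ``compare the defining relations'' hides exactly the difficulty the paper spends most of its effort on.  Concretely: the induced automorphism of $\hat B$ need not fix the cone-point generators $a_j$; by Proposition~\ref{constrainauto} it sends each $a_j$ to a conjugate of $a_j^{k_j}$ for some $k_j$ coprime to $\alpha_j$, and there is no \emph{a priori} reason for the various $k_j$ to coincide.  If you lift $a_j$ to $\Gamma_1$ and push through $\Phi$, the relation $a_j^{\alpha_j}=h^{-\beta_j}$ gives
\[
u\,\beta_j(M_1)\;\equiv\;k_j\,\beta_j(M_2)\pmod{\alpha_j},
\]
not $\beta_j(M_2)\equiv u\,\beta_j(M_1)$.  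So the multiplier relating the $\beta_j$ on the two sides is $u\,k_j^{-1}$, which depends on $j$.  The ``analogous constraint on $b$'' coming from the long relation $a_1\cdots a_r[u_1,v_1]\cdots[u_g,v_g]=h^b$ is worse still, since you have no control over where $\Phi$ sends the surface generators $u_i,v_i$.  Without knowing that all these multipliers agree, you cannot conclude $e(M_2)=u\cdot e(M_1)$, and the slick argument $u\in\Q\cap\hat\Z^\times=\{\pm1\}$ never gets off the ground.

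The paper resolves this by recasting the problem cohomologically: the Seifert data is encoded in the extension class $\eta\in H^2(B;\Z)$, and the key computation (the proposition immediately preceding the proof of Theorem~\ref{mainthm}) builds an explicit partial free resolution of $\Z$ over $\Z[B]$, transports it to $\hat\Z[[\hat B]]$ via goodness, and shows that any automorphism of $\hat B$ of the form in Proposition~\ref{constrainauto} acts on $H^2(\hat B;\Z/n)$ as multiplication by a \emph{single} $\kappa\in\hat\Z$ with $\kappa\equiv k_j\pmod{\alpha_j}$ for every $j$.  This is what forces the $\beta_j$ to transform uniformly.  The equality $|e(M_1)|=|e(M_2)|$ is obtained separately in Theorem~\ref{fibrepres}, via the torsion in $H_1$ of a surface-base cover; combining the two then pins down $\kappa=1$ when $e\neq0$.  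Your sketch would need both of these ingredients to be complete.
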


The non-trivial part of the `if' direction of this theorem was proved by Hempel \cite{hempel14}. Alternatively one can apply the argument of Theorem \ref{conversebdd} to get a new proof.

The solution of the problem will proceed in several stages. Firstly, we will show that, except in the `trivial' geometries, an isomorphism of profinite completions of \SFS{}s will induce an automorphism of the profinite completion of the base orbifold group $\hat B$, which the two \SFS{}s will share; and furthermore that both \SFS{}s will have the same Euler number (up to sign). We will then constrain the automorphism of $\hat B$ and compute the action of such an automorphism on $H^2 \hat B$. Intuitively we will be considering what can happen to the `fundamental class' of the orbifold. We will then be able to conclude the result by considering the cohomology classes giving the \SFS{}s as central extensions of $\hat B$.

The `trivial' geometries mentioned above are $\sph{3}$, $\E^3$, $\sph{2}\times\R$; they are trivial for the profinite rigidity problem in the sense that spherical manifolds have finite fundamental group, and there are only six and two orientable manifolds of the latter two geometries respectively, all distinguished by their first homology. For the rest of the section, a `generic' \SFS{} will mean any \SFS{} not of the above geometries.

We will be using heavily the fact that the subgroup generated by a regular fibre is central; this is only true for orientable base orbifold, so first note that we can reduce to this case as follows. Suppose first that we have a closed \SFS{}. The orbifold group $B$ has a canonical index 2 subgroup corresponding to the orientation cover of the underlying surface of the orbifold. This induces an index 2 cover of the \SFS{}. Note that this cover contains all the information needed to recover the original \SFS{}; in particular, for each exceptional fibre with Seifert invariants $(p,q)$ where $1\leq q< p/2$ the cover has $2$ exceptional fibres with the same invariant $(p,q)$, and has no other exceptional fibres. Because the index 2 subgroup is unique, it will follow that any isomorphism of the profinite completions of the \SFS{} groups will induce an isomorphism for these characteristic covers, to which we may apply the theorem for orientable base orbifold, and then recover the original manifolds. 

When the \SFS{} has boundary, the base orbifold group itself does not distinguish orientable base orbifold from non-orientable, and hence has no obvious characteristic subgroup. However if we assume that the peripheral subgroups of the base orbifold groups are preserved under the isomorphism of profinite completions, we can collapse each of them to obtain a closed orbifold and take the canonical index 2 cover of this, and hence of the original orbifold, to recover the above situation.

\subsection{Preservation of the fibre}
We first prove that the subgroup given by the fibre is still essentially unique for most \SFS{}s. In the statement of the theorem, a `virtually central' subgroup $Z$ of a group $G$ will mean that either $Z$ is central in $G$ or that the ambient group $G$ has an index 2 subgroup containing $Z$ in which $Z$ is central. The fibre subgroup of a \SFS{} subgroup is such a subgroup; it is central when the base orbifold is orientable, or is central in the index 2 subgroup corresponding to the orientation cover of a non-orientable base orbifold.
\begin{theorem}\label{fibrepres}
Let $M,N$ be \SFS{}s and suppose that $\widehat{\pi_1(M)}\iso\widehat{\pi_1(N)}$. Call this common completion $\Gamma$. Then:
\begin{enumerate}
\item $M$ and $N$ have the same geometry;
\item $\Gamma$ has a unique maximal virtually central normal procyclic subgroup unless the geometry of $M$ is \sph{3}, $\sph{2}\times\R$, or $\E^3$; and
\item If the geometry  is \Nil, \HR, or \SLR, then $M$ and $N$ have the same base orbifold and Euler number.
\end{enumerate}
\begin{rmk}
The first conclusion of this theorem was already known by the above-cited theorem of \cite{WZ14}; the proof here, specific to \SFS{}s, is different in some respects, so we include it for completeness.
\end{rmk}
\begin{proof}
As usual, spherical manifolds are distinguished by having finite fundamental groups, hence finite profinite completions. The four model geometries $\E^3$, \Nil, \HR, and \SLR\ are contractible, so the fundamental groups of all such manifolds have cohomological dimension exactly 3. All compact $\sph{2}\times\R$-manifolds are finitely covered by $\mathbb{S}^2\times\mathbb{S}^1$, hence have a finite index subgroup of cohomological dimension 1. All 3-manifold groups are good, so this fact is detected by the profinite completion, hence $\sph{2}\times\R$ is distinguished from the other geometries. Henceforth assume that $M$ has one of the four relevant geometries with contractible universal cover.

Now suppose that $\Gamma$ has two virtually central normal procyclic subgroups, $\overline{<\!h\!>}$ and $\overline{<\!\eta\!>}$, where $h$ is represented by a regular fibre of $M$ and $\overline{<\!\eta\!>}$ is not contained in $\overline{<\!h\!>}$. We will show first that the base orbifold $O$ is Euclidean. Passing to the quotient by $\overline{<\!h\!>}$, the image of $\overline{<\!\eta\!>}$ is a normal procyclic subgroup of $\widehat{\pi_1^\text{orb}(O)}$. By Corollary 5.2 of \cite{BCR14} and Proposition \ref{torselts} above, profinite completions of non-positively curved orbifold groups have no finite normal subgroups, so $\overline{<\!\eta\!>}$ persists as an infinite procyclic subgroup of $\widehat{\pi_1^\text{orb}(O)}$. Hence also the subgroup $\overline{<\!h\!>}$ is still maximal even in the profinite completion i.e. is not contained in some larger normal procyclic subgroup. 

We can now pass to a finite index subgroup of $\Gamma$ whose intersections with $\overline{<\!h\!>}$, $\overline{<\!\eta\!>}$ are central and non-trivial, and then to a further finite index subgroup $\Delta$ so that the corresponding cover of $M$ has base orbifold an orientable surface $\Sigma$ covering $O$. The image of $\overline{<\!\eta\!>}$ now gives a central subgroup of $\widehat{\pi_1 \Sigma}$. But the profinite completion of a surface group has no centre unless the surface is a torus (see \cite{anderson74}, \cite{nakamura94} or \cite{asada01}). Hence $O$ is Euclidean.

The base orbifold $\Sigma$ is now a torus. We know that $\overline{<\!\eta\!>}$ is a central procyclic subgroup of $\widehat{\pi_1 \mathbb{T}^2}\iso \hat{\Z}{}^2$; assume now that it is maximal. Using Theorem 4.3.5 of \cite{ribeszalesskii}, the quotient $\hat{\Z}{}^2/\overline{<\!\eta\! >}$ is $\hat\Z$; hence we can quotient by the closed subgroup $\hat\Z{}^2$ generated by both $h$ and $\eta$ to get an exact sequence 
\[ 1\to \hat\Z{}^2\to \Delta \to\hat{\Z} \to 1\]

We now calculate that
\[ H^1 (\Delta; \Z/n) \iso (\Z/n)^3 \]
for all $n$. As described in section \ref{seccohom} we can calculate this cohomology group using the spectral sequence whose $E_2^{p,q}$ page is given by 
\[E_2^{p,q} = H^p (\hat{\Z}; H^q(\hat{\Z}{}^2; \Z/n))\] 
\[\begin{tikzcd}[row sep = small, column sep = small]
\quad & 3 &0 & 0 & 0\\
& 2 &  \Z/n & \Z/n & 0 \\
q & 1 &  (\Z/n)^2\ar{rrd}& (\Z/n)^2 & 0\\
& 0 &  \Z/n & \Z/n & 0 \\
&& 0 & 1 & 2 & \\
&&&p&& 
\end{tikzcd}\]
Now the only arrow that could alter the $p+q = 1$ diagonal is the arrow shown, which is trivial; so this diagonal is already stable and the first cohomology is $(\Z/n)^3$ as required.

But the finite index subgroup $\Delta\leq\Gamma$ corresponds to a cover $\tilde M\to M$ where the base orbifold of $\tilde M$ is a torus. Then we have 
 \[ \pi_1 \tilde{M} = \big< u_1, v_1, h \,\big|\, [u_1, v_1] = h^{-e}, h\text{ central}\big> \]
where $e$ is the Euler number of $\tilde M$; hence $H_1 \tilde{M} = \Z^{2g}\oplus\Z / e\Z$ and
\[ H^1(\Delta;\Z/n)\iso H^1(\tilde M;\Z/n) \iso (\Z/n)^2 \oplus \Z/{\rm hcf}(e,n)\]
for all $n$. Comparing with the above, we find that $e$ must be zero; by naturality $M$ also has trivial Euler number.

We now deal with the case where $\Gamma$ has a unique maximal virtually central procyclic normal subgroup. Note that in this case, the isomorphism $\widehat{\pi_1(M)}\iso\widehat{\pi_1(N)}$ preserves $\overline{<\!h\!>}$, and hence induces an isomorphism of the profinite completions of the base orbifold; then by Theorem \ref{BCR} and Corollary \ref{BCR+}, $M$ and $N$ have the same base orbifold $O$. 

If we now show that $M$, $N$ have the same Euler number, then we are finished as the geometries are distinguished by base orbifolds and whether the Euler number is non-zero. Again pass to an index $d$ subgroup $\Delta$ of $\Gamma$ with the corresponding cover of $M$ being $\tilde M\to M$; where $M$ has base orbifold a surface. Then, as above, for both $N$ and $M$, the Euler number is given up to sign by the torsion part of $H_1\tilde{M}$, divided by $d$, because the Euler number has the naturality property in Proposition \ref{eulernaturality}. First homology is a profinite invariant, hence $N$ and $M$ have the same Euler number and the proof is complete.
\end{proof}
\end{theorem}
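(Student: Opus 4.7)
The plan is to handle the three conclusions in order, peeling off geometries one at a time via profinite invariants and then exploiting the virtually central fibre subgroup.

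For~(i), the first dichotomy is finiteness: spherical manifolds are precisely those with finite fundamental group, a property plainly visible in $\Gamma$. Among the other five Seifert geometries, $\sph{2}\times\R$ is the only one whose universal cover is not contractible; compact $\sph{2}\times\R$-manifolds are finitely covered by $\ss{2}{1}$ and hence have a finite-index subgroup of cohomological dimension $1$. The remaining geometries $\E^3$, \Nil, \HR, \SLR{} all have contractible universal cover, so their fundamental groups have cohomological dimension $3$. Since compact 3-manifold groups are good, these dimensions are detected by $\Gamma$, so $\sph{2}\times\R$ is separated from the rest. The four contractible-universal-cover geometries are then distinguished by the base orbifold together with whether the Euler number vanishes, so~(i) in these cases falls out once~(iii) is in hand.

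For~(ii), let $h\in\pi_1 M$ be represented by a regular fibre, so $\overline{\langle h\rangle}\subseteq\Gamma$ is virtually central procyclic; suppose for contradiction there is another virtually central procyclic normal subgroup $\overline{\langle \eta\rangle}$ not contained in it. Quotienting by $\overline{\langle h\rangle}$ yields $\widehat{\ofg[(O)]}$, and by Corollary~5.2 of \cite{BCR14} combined with Proposition~\ref{torselts}, profinite completions of orbifold groups with $\oec[]\leq 0$ carry no non-trivial finite normal subgroups. Hence the image of $\eta$ survives as an infinite procyclic central element in $\widehat{\ofg[(O)]}$. Passing to a finite-index subgroup so that the corresponding base-orbifold cover is an orientable surface $\Sigma$, the profinite completion $\widehat{\pi_1\Sigma}$ must then have non-trivial centre; for hyperbolic surfaces this is known to fail \cite{anderson74,nakamura94,asada01}, so $\Sigma$ is a torus. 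In that case the cover $\tilde M$ sits in an extension $1\to\hat\Z{}^2\to\Delta\to\hat\Z\to 1$, and I would run the Serre spectral sequence to show $H^1(\Delta;\Z/n)\iso(\Z/n)^3$ for every $n$. A direct computation from the standard presentation of $\pi_1\tilde M$ with torus base gives instead $(\Z/n)^2\oplus\Z/\gcd(e,n)$; comparing forces $e(\tilde M)=0$, and Proposition~\ref{eulernaturality} transfers this to $e(M)=0$, placing $M$ in $\E^3$-geometry, one of the excluded cases.

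For~(iii), once uniqueness is established in geometries \Nil, \HR, \SLR, the isomorphism $\widehat{\pi_1 M}\iso\widehat{\pi_1 N}$ sends the maximal virtually central procyclic subgroup of $\widehat{\pi_1 M}$ onto that of $\widehat{\pi_1 N}$ and descends to an isomorphism of the profinite completions of the two base orbifold groups. Theorem~\ref{BCR} together with Corollary~\ref{BCR+} then identifies the base orbifolds. To match Euler numbers, pick a common finite-index subgroup of $\Gamma$ corresponding to covers $\tilde M$, $\tilde N$ with orientable closed surface base; from the standard \SFS{} presentation the torsion summand of $H_1\tilde M$ records $e(\tilde M)$ up to sign. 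First homology is a profinite invariant, so $e(\tilde M)=\pm e(\tilde N)$, and Proposition~\ref{eulernaturality} propagates this back to the equality $e(M)=\pm e(N)$.

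The hard step is the torus subcase of~(ii). There is no soft structural reason ruling out an extra central element when the base is a torus and $e\neq 0$: the only tool that appears sensitive to the Euler number at the level of $\Gamma$ is explicit cohomology, so the Serre spectral sequence computation of $H^1(\Delta;\Z/n)$ together with its comparison to $H^1(\tilde M;\Z/n)$ is forced on us. The centrelessness of hyperbolic surface group profinite completions is the other delicate ingredient, without which higher-genus bases would slip through the argument.
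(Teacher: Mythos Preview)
Your proposal is correct and follows essentially the same route as the paper: separating $\sph{3}$ and $\sph{2}\times\R$ via finiteness and cohomological dimension, ruling out a second virtually central procyclic by pushing to a surface-base cover and invoking centrelessness of $\widehat{\pi_1\Sigma}$ for hyperbolic $\Sigma$, then handling the torus case via the Serre spectral sequence comparison of $H^1(\Delta;\Z/n)$ against the presentation computation, and finally reading off base orbifold (via \cite{BCR14}) and Euler number (via torsion in $H_1$ of a surface-base cover). The only places where you are slightly looser than the paper are in calling the image of $\eta$ ``central'' in $\widehat{\ofg[(O)]}$ before passing to the orientable-surface cover (it is only virtually central there) and in asserting the extension $1\to\hat\Z{}^2\to\Delta\to\hat\Z\to 1$ without justifying that the quotient is $\hat\Z$; both are easily patched and do not affect the argument.
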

Recall that the Euler number of the \SFS{} was of the form 
 \[ e = -(b+\sum \frac{\beta_i}{\alpha_i}) \]
with $b$ an integer. Thus given the base orbifold (hence the $\alpha_i$) and the Euler number, the only further ambiguity is whether we can change the $\beta_i$ by values $\delta_i$ (with $\delta_i$ not congruent to $0$ modulo $\alpha_i$) such that $\sum \delta_i/\alpha_i$ is an integer. By the Chinese Remainder Theorem, there is no such collection of $\delta_i$ when all the $\alpha_i$ are coprime. Hence we have the following corollary, in which we change notation to follow the usual conventions for cone points.

\begin{clly}\label{noambig}
Let $M$ be a Seifert fibre space whose base orbifold is an orbifold $(\Sigma; p_1,\ldots,p_k)$ where $p_1,\ldots,p_k$ are coprime. Then $\pi_1 M$ is distinguished by its profinite completion from all other 3-manifold groups.
\end{clly}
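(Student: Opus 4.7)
The plan is a short deduction from Theorem \ref{fibrepres} together with the Chinese Remainder Theorem. Let $N$ be any 3-manifold with $\widehat{\pi_1 N}\cong\widehat{\pi_1 M}$; I want to show $\pi_1 N\cong\pi_1 M$. The Wilton--Zalesskii result cited in the introduction ensures that $N$ is itself a Seifert fibre space, and then Theorem \ref{fibrepres} tells us that $M$ and $N$ share geometry, and (outside the trivial geometries $\sph{3}$, $\sph{2}\times\R$, $\E^3$) also share the base orbifold $(\Sigma;p_1,\ldots,p_k)$ and the Euler number $e$.

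I would dispose of the trivial geometries by hand. Spherical manifolds have finite fundamental group, hence are determined by the profinite completion itself. The closed orientable $\sph{2}\times\R$- and $\E^3$-manifolds form short lists distinguished by first homology, which is a profinite invariant. Inspecting the Euclidean 2-orbifolds moreover reveals that only $\T^2$ and the Klein bottle have pairwise coprime cone point orders (vacuously), so under the hypothesis of the corollary the flat case already reduces to two well-understood base orbifolds.

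The main case is then uniqueness of the Seifert symbol given base orbifold and Euler number. By Proposition \ref{SFSinvariants} the data for $M$ and $N$ consist of integers $b^M,b^N$ and residues $\beta_i^M,\beta_i^N\in\{1,\ldots,p_i-1\}$ coprime to $p_i$, satisfying
\[ b^M+\sum_i\frac{\beta_i^M}{p_i}=b^N+\sum_i\frac{\beta_i^N}{p_i}. \]
Set $\delta_i\equiv\beta_i^M-\beta_i^N\pmod{p_i}$ and $P=\prod_j p_j$; clearing denominators gives $\sum_i\delta_i(P/p_i)\equiv 0\pmod{P}$. Reducing modulo each $p_i$ in turn and using that $P/p_i$ is a unit modulo $p_i$ by pairwise coprimality forces $\delta_i\equiv 0\pmod{p_i}$ for every $i$; hence $\beta_i^M=\beta_i^N$, then $b^M=b^N$, and the two Seifert symbols (and fundamental groups) coincide. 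The main (minor) obstacle is bookkeeping rather than mathematics: one must remember the orientation-reversing ambiguity $(b,\Sigma;(\alpha_i,\beta_i))\sim(-b-r,\Sigma;(\alpha_i,\alpha_i-\beta_i))$, which corresponds to replacing $N$ by its orientation-reverse and so preserves $\pi_1 N$; and when the base orbifold is non-orientable one first passes to the canonical index 2 cover described at the start of the section to reduce to the orientable case.
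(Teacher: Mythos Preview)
Your proposal is correct and follows essentially the same route as the paper: invoke Wilton--Zalesskii and Theorem~\ref{fibrepres} to pin down the base orbifold and Euler number, then use the Chinese Remainder Theorem to see that coprime $p_i$ leave no room to vary the $\beta_i$ while keeping $e$ fixed. The paper's own argument is the short paragraph immediately preceding the corollary; your version is simply a more explicit write-up, with the trivial geometries, the orientation ambiguity, and the non-orientable base handled by hand (the last of these is in fact unnecessary here, since Theorem~\ref{fibrepres} already covers both orientable and non-orientable base).
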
 

The above theorem was stated and proved for closed \SFS{}s. A similar result holds for \SFS{}s with boundary. Much of the above argument holds just as well when the \SFS{} has boundary, except that we must rule out some cases with more than one geometry, and the Euler number is no longer defined (a section of a surface-with-boundary always exists). Furthermore, surfaces are no longer determined by their profinite completion unless we have some information about the boundary.

\begin{theorem}\label{boundedbaseinvariant}
Let $M$, $N$ be Seifert fibre spaces with non-empty boundary. Suppose that $\widehat{\pi_1(M)}\iso\widehat{\pi_1(N)}$. Call this common completion $\Gamma$. Furthermore assume that $M$ and $N$ have the same number of boundary components. Then:
\begin{enumerate}
\item $\Gamma$ has a unique maximal virtually central normal procyclic subgroup unless $M$ (and hence $N$) is a solid torus, $\mathbb{S}^1\times\mathbb{S}^1\times\I$ or the orientable \I-bundle over the Klein bottle; and
\item except in these cases, $M$ and $N$ have the same base orbifold.
\end{enumerate}
\begin{proof}
The only positive Euler characteristic orbifolds with boundary are the disc with possibly one cone point; the \SFS{} is then a fibred solid torus.

The only zero Euler characteristic orbifolds with boundary are the annulus (giving the \SFS{} $\mathbb{S}^1\times\mathbb{S}^1\times\I$), the M\"obius band and disc with two order 2 cone points (both giving the orientable \I-bundle over the Klein bottle). 

These three spaces all have different profinite completions of fundamental groups; one is $\hat \Z$, one is $\hat\Z{}^2$ and the other is non-abelian; and none of the \SFS{}s with hyperbolic base orbifold have virtually abelian fundamental group, so we can safely proceed assuming $M$, $N$ are not any of the three exceptional manifolds above. 

Part 1 of the proposition now follows from the same argument as in Theorem \ref{mainthm}, replacing ``virtually a non-abelian surface group" with ``virtually a non-abelian free group" to get the lack of central subgroups of the base orbifold group. Now the base orbifold groups have isomorphic profinite completions, so by \cite{BCR14}, they are the same group. The ambiguity in surface is now resolved by knowledge that $M$ and $N$ and hence their base orbifolds have the same number of boundary components; and the fact that $\Gamma$ detects whether the unique maximal virtually central normal subgroup $<\!h\!>$ is genuinely central or merely virtually so, hence whether the base orbifold is orientable or not.
\end{proof}
\end{theorem}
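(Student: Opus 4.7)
The plan is to mirror the strategy of Theorem \ref{fibrepres}, but with two simplifications and one subtlety. The simplifications are that there is no Euler number to track (so the Euclidean case in the closed argument, which was the delicate homology computation, disappears), and the base orbifold group of a bounded \SFS{} is a free product of cyclic groups rather than a surface group. The subtlety is that 2-orbifolds with boundary are not determined by their fundamental groups (a free group is the fundamental group of infinitely many orbifolds), so extra work is needed in the final step to pin down the base.

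First I would list the bounded 2-orbifolds with nonnegative Euler characteristic: the disc with at most one cone point (giving the fibred solid torus), the annulus (giving $\ss{1}{1}\times\I$), the M\"obius band, and the disc with two cone points of order 2 (the latter two giving the orientable \I-bundle over the Klein bottle). These three exceptional \SFS{}s have fundamental groups with profinite completions $\hat\Z$, $\hat\Z{}^2$, and a non-abelian virtually-$\hat\Z{}^2$ group respectively; all three are distinguished from each other by abelianisation and abelianness, and are all virtually abelian, whereas a \SFS{} over a hyperbolic bounded 2-orbifold has fundamental group containing a non-abelian free subgroup in its base, hence is not virtually abelian. So if $M$ is any of these three, $N$ must be the same, and the conclusions hold trivially. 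In what follows assume the base orbifold is hyperbolic.

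For part 1, suppose $\Gamma$ contains two virtually central normal procyclic subgroups $\overline{\langle h\rangle}$ and $\overline{\langle \eta\rangle}$ with $\overline{\langle\eta\rangle}\not\subseteq\overline{\langle h\rangle}$, where $h$ is represented by a regular fibre of $M$. Project to the base: the image of $\overline{\langle\eta\rangle}$ is a normal procyclic subgroup of $\widehat{\ofg[(O)]}$. By Corollary 5.2 of \cite{BCR14} together with Proposition \ref{torselts}, the profinite completion of a non-positively curved 2-orbifold group has no nontrivial finite normal subgroup, so this image is an infinite procyclic. Pass to a finite-index subgroup $\Delta\leq\Gamma$ whose intersection with $\overline{\langle\eta\rangle}$ is nontrivial and central, and so that the corresponding cover of $M$ has base orbifold an orientable surface $\Sigma$ with boundary. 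Then $\pi_1\Sigma$ is a non-abelian finitely generated free group. The image of $\overline{\langle\eta\rangle}$ now gives a nontrivial central procyclic subgroup of $\widehat{\pi_1\Sigma}$. But the profinite completion of a non-abelian free group has trivial centre (standard; it follows, e.g., from the fact that free groups are conjugacy separable and have trivial centre, or from the non-abelian nature of free pro-$p$ quotients), a contradiction. This forces uniqueness of the maximal virtually central normal procyclic subgroup.

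Given uniqueness, the isomorphism $\widehat{\pi_1 M}\iso\widehat{\pi_1 N}$ carries the fibre subgroup $\overline{\langle h\rangle}$ of $M$ to that of $N$, inducing an isomorphism $\widehat{\ofg[(O_M)]}\iso\widehat{\ofg[(O_N)]}$. Corollary \ref{BCR+} (or directly Theorem \ref{BCR}) then gives $\ofg[(O_M)]\iso\ofg[(O_N)]$ as abstract groups. Since bounded 2-orbifold groups can be abstractly isomorphic without the orbifolds being homeomorphic, I then pin down $O_M\iso O_N$ by using (a) the assumption that $M$ and $N$ have the same number of boundary components, which forces their base orbifolds to have the same number of boundary components, (b) the orders of finite cyclic free factors in the abstract isomorphism type of the orbifold group determine the cone point data, and (c) orientability of the base orbifold is detected by $\Gamma$, since the base is orientable precisely when $\overline{\langle h\rangle}$ is central in all of $\Gamma$ rather than only in an index-2 subgroup. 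This last fact is a profinite invariant, so the two base orbifolds agree on orientability as well, completing the identification.

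The main obstacle, as in the closed case, is the uniqueness of the virtually central procyclic subgroup; the only real work is ruling out centre in $\widehat{\pi_1\Sigma}$ after passing to a surface cover with boundary, which here is cleaner than in the closed case because the cover has free fundamental group rather than a surface group, so one does not need to invoke the more delicate centrelessness results of \cite{anderson74, nakamura94, asada01}.
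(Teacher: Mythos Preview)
Your proposal is correct and follows essentially the same approach as the paper: identify and dispose of the three exceptional manifolds by elementary means, run the argument of Theorem~\ref{fibrepres} with ``non-abelian free group'' in place of ``non-abelian surface group'' to get uniqueness of the fibre subgroup, apply \cite{BCR14} to identify the base orbifold groups abstractly, and then resolve the remaining ambiguity using the boundary-component count together with centrality versus virtual centrality of the fibre to detect orientability. Your write-up is in fact somewhat more explicit than the paper's (e.g.\ spelling out why the profinite completion of a non-abelian free group is centreless, and isolating the cone-point data as a separate observation), but the logical skeleton is identical.
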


\subsection{Central extensions}\label{centralexts}
A {\em central extension} of a group $B$ by a (necessarily abelian) group $A$ consists of a short exact sequence
\[ 1\to A\to G\to B\to 1\]
where the image of $A$ is contained in the centre of $G$. Two such extensions are regarded as equivalent if there is a commutative diagram
\[\begin{tikzcd}
1\ar{r}&\ A \ar{r}\ar{d}{\rm id} & G\ar{r}\ar{d}{\iso} & B \ar{r}\ar{d}{\rm id} &1\\
1\ar{r}&\ A \ar{r}\ & G'\ar{r} & B \ar{r} &1
\end{tikzcd}\]
Equivalence classes of central extensions are classified by elements of $H^2(B;A)$. The proof of this fact proceeds directly via cochains, but for what follows it will also be convenient to have the following interpretation. 

Let $B=\big<x_1,\ldots,x_n\,\big|\,r_1,\ldots,r_m\big>$ be a presentation for $B$, let $F$ be the free group on the $x_i$, and $R$ the normal subgroup generated by the $r_j$. From the Serre spectral sequence for the short exact sequence
\[ 1\to R\to F\to B\to 1\]
we obtain the five-term exact sequence
\[0\to H^1 (B;A) \to H^1 (F;A) \to (H^1 (R;A))^F \to H^2(B;A) \to 0=H^2(F;A)\]
where the third non-zero term denotes those elements of $H^1(R;A)$ invariant under the conjugation action of $F$; in fact this is the group $H^1(R/[R,F];A)$. Given an element of $H^2(B;A)$, lift to some 
\[\xi\in (H^1(R;A))^F = ({\rm Hom}(R,A))^F\]
Then a central extension of $B$ by $A$ is given by the `presentation' (abusing notation slightly):
\[ G = \big< Y, x_1,\ldots, x_n\,\big|\, S,\, Y\subseteq Z(G),\, r_1=\xi(r_1), \ldots, r_m=\xi(r_m)\big>\]
where $A=\big<Y\big| S\big>$. The condition that $A$ does genuinely include into this group is equivalent to the invariance of $\xi$ under the action of $F$. The ambiguity under choice of lift to an element $\xi$ is an element $\psi\in H^1(F;A)$. However this ambiguity corresponds precisely to changing the generating set of $G$ to $Y$ and the elements $x'_i=x_i\cdot \psi(x_i)$. Conversely if two such $G$, $G'$ given by $\xi, \xi'$ are isomorphic {\em by an isomorphism $\Phi$ fixing $B$ and $A$}, then $\xi$ and $\xi'$ differ by $\psi\in H^1(F;A)$ given by $\psi(x_i) = x_i \cdot (\Phi(x_i))^{-1}$.

The question of when two central extensions $G,G'$ of $B$ by $A$ given by $\zeta,\zeta'\in H^2(B;A)$ can be isomorphic allowing arbitrary automorphisms for $B$ and $A$ is more subtle; one needs to prove whether any automorphisms of $B$ and $A$ can carry $\zeta$ to $\zeta'$ by the induced maps on $H^2$. This will be the central issue in the proof of Theorem \ref{mainthm}.

The above theory of central extensions also holds for $\hat B$ profinite, provided that the abelian group $A$ is finite so that the cohomology group $H^2(\hat B,A)$ is reasonably well-behaved. See \cite{ribeszalesskii}. The fundamental groups of generic Seifert fibre spaces (over orientable base) are central extensions
\[ 1\to\Z\to G\to B\to 1\]
classified by an element $\eta_G\in H^2(B;\Z)$, where $B=\ofg[O]$ is the fundamental group of the base orbifold. The profinite completion of a generic \SFS{} group is a central extension of $\hat B$ by the infinite group $\hat\Z$. To avoid the complications raised by the presence of $\hat\Z$, we restrict to a finite coefficient group as follows. Note that since an isomorphism of profinite completions of two \SFS{} groups $G$, $G'$ preserves this central subgroup $\hat\Z$ by Theorem \ref{fibrepres}, and since $\hat\Z$ has a unique index $t$ subgroup, any isomorphism $\hat G\iso \hat{G}'$ induces an isomorphism 
\[\Gamma = \hat G / \overline{<\!h^t\!>}\iso \hat{G}'/\overline{<\!h^t\!>}=\Gamma'\]
where $\Gamma, \Gamma'$ are now central extensions of $\hat B$ by $\Z/t$. Hence they are classified by elements $\zeta,\zeta'$ of $H^2(\hat B;\Z/t)$. But $B$ is a good group, hence $H^2(\hat B;\Z/t)$ is canonically isomorphic to $H^2(B;\Z/t)$; and $\zeta_G,\zeta_{G'}$ are the images of $\eta_G,\eta_{G'}$ under the maps
\[ H^2(B;\Z)\to H^2(B;\Z/t)\iso H^2(\hat B;\Z/t)\]

It remains to show that no automorphisms of $\hat B$ and $\Z/t$ can carry $\zeta_G$ to $\zeta_{G'}$ under the induced maps on $H^2(\hat B;\Z/t)$ for all $t$ unless the manifolds $M_1$, $M_2$ are homeomorphic or are covered by the theorem of Hempel \cite{hempel14}.

Before moving on, let us calculate the cohomology classes $\eta_G$ in terms of the five-term exact sequence; this will be of use later. For a \SFS{} over orientable base with symbol 
 \[(b, \Sigma; (p_1, q_1),\ldots, (p_r, q_r))  \]
the fundamental group has presentation
\begin{multline*}
\big< a_1,\ldots, a_r, u_1, v_1, \ldots, u_g, v_g, h\,\big|\\ h\in Z(\pi_1 M),\, a_i^{p_i} h^{q_i}, \,a_1\ldots a_r[u_1, v_1]\ldots [u_g, v_g] = h^b \big>  
\end{multline*}
Let $1\to R\to F\to B\to 1$ be the corresponding presentation of the base orbifold group. Now $R/[R,F]$ is in fact the free \Z-module on these relations $y_0 = a_1\cdots v_g^{-1}$, $y_i = a_i^{p_i}$; comparing to above general theory we see that the cohomology class $\eta_G$ is the image in $H^2(B;\Z)$ of the map
\[ y_0\mapsto b,\quad y_i\mapsto -q_i\]
in ${\rm Hom}(R/[R,F], A)$. The chain complexes in the following section make rigorous our treatment of $R/[R,F]$ as a free abelian group on these generators. The calculation is similar for the bounded case, except that the $y_0$ term does not appear.

\subsection{Action on cohomology}
We first constrain the possible automorphisms of base orbifold that we need to consider:
\begin{prop}\label{constrainauto}
Let $M_1$, $M_2$ be generic \SFS{}s with $\widehat{\pi_1(M_1)}\iso\widehat{\pi_1(M_2)}$. Let the base orbifold group be 
\[ B = \big<\, a_1, \ldots, a_r, u_1, v_1\ldots, u_g, v_g \,\big|\, a_1^{p_1},\ldots, a_r^{p_r}, a_1\cdots a_r\cdot [u_1,v_1]\cdots [u_g,v_g]\,\big> \]
Then some isomorphism of $\widehat{\pi_1(M_1)}$ with $\widehat{\pi_1(M_2)}$ induces an automorphism of $\hat B$ mapping each $a_i$ to a conjugate of $a_i^{k_i}$, where $k_i$ is coprime to $p_i$.
\begin{proof}
This is a simple corollary of Proposition \ref{torselts}; for the induced automorphism of $\hat B$ from any given isomorphism of the $\widehat{\pi_1(M_i)}$ must induce a bijection on conjugacy classes of maximal torsion elements; hence $a_i$ is sent to a conjugate of $a_{\sigma(i)}^{k_i}$ for some permutation $\sigma$ with $p_{\sigma(i)}=p_i$ and $k_i$ coprime to $p_i$. Permuting the $a_i$ under the permutation $\sigma^{-1}$ is an automorphism of $B$, hence of $\hat B$, so we can force $\sigma$ to be the identity; on the level of the \SFS{}s we are simply relabelling the exceptional fibres and exploiting the invariance of the fundamental group under such relabellings.
\end{proof}
\end{prop}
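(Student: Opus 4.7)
The plan is to combine the fibre-preservation theorem with the conjugacy classification of torsion elements to pin down where the $a_i$ can go under an induced isomorphism of $\hat B$.

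First I would use Theorem \ref{fibrepres}: since $M_1$ and $M_2$ are generic, their common profinite completion $\Gamma$ has a unique maximal virtually central normal procyclic subgroup. This subgroup corresponds to the fibre in each \SFS{}, so any isomorphism $\widehat{\pi_1 M_1} \cong \widehat{\pi_1 M_2}$ carries the fibre subgroup of $\pi_1 M_1$ to that of $\pi_1 M_2$. Quotienting yields an induced isomorphism $\hat B_1 \cong \hat B_2$, and by Corollary \ref{BCR+} the two base orbifolds are in fact the same, so this is an automorphism $\Phi$ of $\hat B$ for a single group $B$.

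Next I would study where torsion goes. The elements $a_i$ are torsion elements of $B$ of order $p_i$, and in the Fuchsian (respectively Euclidean) case Theorem 5.1 of \cite{BCR14} (respectively Proposition \ref{torselts}) tells us that every torsion element of $\hat B$ is conjugate to a torsion element of $B$, and that two torsion elements of $B$ that become conjugate in $\hat B$ were already conjugate in $B$. So $\Phi(a_i)$ is conjugate in $\hat B$ to some power $a_j^{k_i}$, with $k_i$ coprime to $p_j$ (to preserve order). To identify $j$, I would note that $\Phi$ induces a bijection on conjugacy classes of torsion elements; in particular, $\Phi$ must carry maximal finite cyclic subgroups to maximal ones. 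The conjugacy classes of maximal finite cyclic subgroups of $B$ are represented precisely by $\langle a_1 \rangle, \ldots, \langle a_r \rangle$ (each coming from a distinct cone point of the base orbifold), so there is a permutation $\sigma \in S_r$ with $p_{\sigma(i)} = p_i$ and $\Phi(a_i)$ conjugate to $a_{\sigma(i)}^{k_i}$.

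Finally I would absorb $\sigma$ into a relabelling automorphism: there is an obvious automorphism of $B$ permuting the $a_i$ according to $\sigma^{-1}$ (possible exactly because $p_{\sigma(i)} = p_i$, so the defining relations $a_i^{p_i}$ are respected; the long relation $a_1 \cdots a_r [u_1,v_1]\cdots[u_g,v_g]$ is preserved up to reordering the commuting cone-point generators in the abelianization, which can be fixed by an inner automorphism after rewriting the relator, or by passing through an equivalent presentation obtained by relabelling the exceptional fibres in the Seifert data). Composing $\Phi$ with the profinite extension of this relabelling gives an automorphism of $\hat B$ that sends each $a_i$ to a conjugate of $a_i^{k_i}$ with $(k_i, p_i) = 1$, as required. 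The one step I would expect to need slight care with is the claim that the cone-point generators $a_i$ really do exhaust (up to conjugacy and taking powers) the maximal finite cyclic subgroups of $B$; but this is standard for orbifold fundamental groups with the given presentation, so no real obstacle remains.
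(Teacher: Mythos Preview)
Your argument is correct and follows essentially the same route as the paper: invoke the torsion-conjugacy results (Theorem~5.1 of \cite{BCR14} in the hyperbolic case, Proposition~\ref{torselts} in the Euclidean case) to force $\Phi(a_i)$ into the conjugacy class of some $a_{\sigma(i)}^{k_i}$, then absorb $\sigma$ by relabelling exceptional fibres. Your parenthetical about the long relation is slightly muddled (the $a_i$ do not commute), but your second alternative---passing through an equivalent presentation via relabelling the Seifert data---is exactly the right justification, and is what the paper does.
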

Note that this proposition works just as well when there is boundary.
\begin{prop}
If $\phi$ is an automorphism of $B$ as in Proposition \ref{constrainauto}, then for any $n$ the action of $\phi^\ast$ on $H^2(\hat B;\Z/n)$ is multiplication by $\kappa$ for some profinite integer $\kappa\in\hat \Z$ such that for all $1\leq i\leq r$, $\kappa$ is congruent to $k_i$ modulo $p_i$.
\begin{proof}
We construct a partial resolution of \Z{} by free $\Z B$-modules, transport this to a partial resolution of $\hat \Z$ by free \pZB-modules, and use this to compute the action on cohomology of the above automorphisms of $B$. Fix the presentation 
\[ B = \big<\, a_1, \ldots, a_r, u_1, v_1\ldots, u_g, v_g \,\big|\, a_1^{p_1},\ldots, a_r^{p_r}, a_1\cdots a_r\cdot [u_1,v_1]\cdots [u_g,v_g]\,\big> \]
of $B$, let $F=F(a_i,u_i,v_i)$ and $R= {\rm ker}(F\to B)$.

Set $C_0 = \Z B$, interpreted as the free \Z-module on the vertices of the Cayley graph of $B$, with $B$-action by left translation on ${\rm Cay}(B)$; the map $\epsilon:\Z B\to\Z$ is the evaluation map. 

Let $C_1 = \Z B\{x_i,\bar{u}_j, \bar{v}_j\}$, the free $\Z B$-module with generators $x_i\,(1\leq i\leq r)$, $\bar{u}_j,\bar{v}_j\,(1\leq j\leq g)$. The generator $x_i$ represents the edge in ${\rm Cay}(B)$ starting at 1 and labelled by $a_i$, and similarly $\bar{u}_j,\bar{v}_j$ represent the edges labelled $u_j,v_j$. Thus $C_1$ is the space of linear combinations of paths in ${\rm Cay}(B)$, with $B$-action given by left-translation.

The boundary map $d_1:C_1\to C_0$ sends each path to the sum of its endpoints, so that for example $x_i \mapsto a_i-1\in\Z B$. Certainly $\epsilon d_1=0$; exactness at $C_0$ now follows by connectedness of the Cayley graph.

Let $C_2=\Z B\{y_0,\ldots, y_r\}$. We can interpret $C_2$ as representing `all the relations of $B$'; that is, all closed loops in the Cayley graph. The generator $y_0$ will represent the relation $a_1\cdots v_g^{-1}$ in the above presentation, and $y_i$ the relation $a_i^{p_i}$; now define $d_2:C_2\to C_1$ by mapping each generator to the loop in the Cayley graph representing it; for instance, 
\begin{align*}
d_2(y_i) &=  x_i + a_1\cdot x_i + a_1^2 \cdot x_i +\cdots + a^{p_i-1}_i \cdot x_i\\
d_2(y_0) &=  x_1 + a_1\cdot x_2 + \cdots + a_1\cdots a_{r-1} \cdot x_r \\
&\quad + a_1\cdots a_r \cdot\bar{u_1} +\cdots - a_1\cdots a_r[u_1,v_1]\cdots [u_g,v_g] \bar{v}_g
\end{align*}  
Any loop in the Cayley graph represents some element of $R$, which can be expressed as a product of conjugates of the relations in the above presentation. Left conjugation of a relation corresponds to left-translating the loop around the Cayley graph; so any such product of conjugates can be realised in the Cayley graph as a $\Z B$-linear combination of the $d_2(y_i)$. Hence $d_1 d_2 = 0$ and the image of $d_2$ is precisely the kernel of $d_1$.

Let us analyse the kernel of $d_2$; let \[s = \sum_i \sum_b n^i_b b \cdot y_i\in {\rm ker}(d_2)\] where $\sum_b n^i_b b\in \Z B$ for each $i$. The coefficient of $x_i$ in $d_2(s)$ is
\[ 0 = \sum_b n^0_b ba_1\cdots a_{i-1} + \sum_b n^i_b b(1+a_i+\cdots a_i^{p_i-1})\]
Multiplying on the right by $(a_i-1)$ kills the second sum; and reparametrising the first sum yields $n^0_{ba_i} = n^0_b$ for all $b\in B$. If $r>1$, the $a_i$ generate an infinite subgroup of $B$; but $\sum n^0_b b$ is a finite linear combination, so $n^0_b=0$ for all $b$. If $r=1$, we can analyse the coefficient of $u_i$ instead as $g>0$ for a non-spherical orbifold; or we can simply note that profinite rigidity in the cases $r=0,1$ was already covered by Corollary \ref{noambig}, so that we need not worry any further about them. We are left to conclude that $\sum_b n^i_b b(1+a_i+\cdots a_i^{p_i-1})=0$, hence $\sum_b n^i_b b$ is some multiple of $(a_i-1)$ and the kernel of $d_2$ is spanned by $(a_i-1)y_i$. 

Now set $C_3 = \Z B\{z_1,\ldots,z_r\}$ and $d_3(z_i) = (a_i-1)\cdot y_i$ to find an exact sequence
\[ C_3\to C_2\to C_1\to C_0\to \Z \]
i.e. a partial resolution of \Z{} by free $\Z B$-modules as desired.

By Proposition \ref{goodgivesexact} we have a partial resolution 
\[ \hat C_3\to \hat C_2\to \hat C_1\to \hat C_0\to \hat \Z \]
where each $\hat C_i$ is the free \pZB-module on the same generators as $C_i$, and the boundary maps are defined by the same formulae on these generators. We can thus use this resolution to compute the first and second (co)-homology on $\hat B$.

Let $\phi:\hat B\to\hat B$ be an automorphism of $\hat B$ as in Proposition \ref{constrainauto}. Construct maps $\phi_{\sharp}:\hat C_i\to \hat C_i$ for $i=1,2$ as follows. Lift $\phi$ to $\tilde\phi: \hat F\to \hat F$ such that 
\[ \tilde\phi(a_i) = (a_i^{k_i})^{g^{-1}_i} \]
for some $g_i\in \hat F$. Write the image of each generator of $\hat F$ under $\tilde\phi$ as a limit of words on these generators; then map the corresponding generator of $\hat C_1$ to the associated limit of paths in the Cayley graph. To define $\phi_{\sharp}$ on $\hat C_2$, note that each relation of $\hat B$ is mapped to an element of $\bar{R}$ under $\tilde\phi$, hence can be written as a (limit of) products of conjugates of relations; now map this to an element of $\hat C_2$ just like before. We have made a choice of expression of an element of $\bar{R}$ in terms of conjugates of relations; the ambiguity is by construction an element of ${\rm ker}(\hat d_2) = {\rm im}(\hat d_3)$, which image will soon vanish. For definiteness, choose
\[ \phi_{\sharp} (y_i) = k_i g_i \cdot y_i \quad (1\leq i\leq r) \]
coming from the obvious expression of $\tilde \phi(a_i^{p_i})$ from above. Because the map on $\bar{R}$ was induced by the map on $\hat F$ used to define $\phi_\sharp :\hat C_1\to \hat C_1$, we get a commuting diagram
\[\begin{tikzcd}
\hat C_3 \ar{r} & \hat C_2 \ar{r} \ar{d}{\phi_\sharp} &\hat C_1 \ar{r}  \ar{d}{\phi_\sharp} &\hat C_0 \\
\hat C_3 \ar{r} & \hat C_2 \ar{r} &\hat C_1 \ar{r} &\hat C_0 
\end{tikzcd}\]
Now apply the functor $\hat \Z \otimes_{\pZB} -$ to the above diagram; i.e., factor out the action of $\hat B$, to get a commuting diagram
\[\begin{tikzcd}
\hat \Z \otimes_{\pZB}\hat C_3 \ar{r}{0} & \hat\Z \{y_0,\ldots, y_r\} \ar{r}{d'_2} \ar{d}{\phi_\sharp} &\hat\Z \{x_i,\bar{u}_j, \bar{v}_j\} \ar{r}{0}  \ar{d}{\phi_\sharp} & \hat \Z \otimes_{\pZB}\hat C_0 \\
\hat \Z \otimes_{\pZB}\hat C_3 \ar{r}{0} & \hat\Z \{y_0,\ldots, y_r\} \ar{r}{d'_2} &\hat\Z \{x_i,\bar{u}_j, \bar{v}_j\}\ar{r}{0} &\hat \Z \otimes_{\pZB}\hat C_0 
\end{tikzcd}\]
with the rows no longer exact, but with the maps marked as zero becoming trivial because the image of each generator of the chain group had a factor $(a_i-1)$. We have some good control over the maps in the above, viz.
\begin{align*}
\phi_\sharp(x_i) &= k_i x_i\\
\phi_\sharp(y_i) &= k_i y_i\\
d'_2(y_0) &= x_1 +\cdots + x_r\\
d'_2(y_i) &= p_i x_i
\end{align*}
If $\phi_\sharp (y_0) = \kappa y_0 +\sum \mu_i y_i$, then tracking this around the diagram we find $$\kappa + p_i \mu_i = k_i$$ for all $i$. 

For $n\in\N$, we now apply ${\rm Hom}_{\hat \Z}(-,\Z/n)$ to the above diagram, to get a commuting diagram 
\[\begin{tikzcd}
{} &\ar{l}{0} (\Z/n) \{y_0^\ast,\ldots, y_r^\ast\}  &(\Z/n) \{x_i^\ast,\bar{u}^\ast_j, \bar{v}^\ast_j\} \ar{l}{d^2} & \ar{l}{0} {} \\
{}  & \ar{l}{0}(\Z/n) \{y^\ast_0,\ldots, y^\ast_r\}  \ar{u}{\phi^\sharp}&\ar{l}{d^2}(\Z/n) \{x_i^\ast,\bar{u}^\ast_j, \bar{v}^\ast_j\}  \ar{u}{\phi^\sharp} &\ar{l}{0} {}
\end{tikzcd}\]
in which the homology of each row gives $H^2(\hat B;\Z/n)$ and $\phi^\sharp$ gives an action on this cohomology group. 

First let us note that this action is genuinely the functorial map $\phi^\ast$ induced by $\phi$. By construction $\hat \Z\otimes_{\pZB}\hat C_2$ is the free $\hat \Z$-module on our relations. In this construction for the discrete group, this would be ${R} / [{R}, F]$. In the profinite world, $\bar{R} / [\bar{R}, \hat F]$ may not be free abelian, as not every closed subgroup of a free profinite group is free; however we do get a canonical surjection
\[ \hat \Z\otimes_{\pZB}\hat C_2 \twoheadrightarrow  \bar{R} / [\bar{R}, \hat F]\]
since our chosen set of relations is a generating set for this latter group. But now the map $\phi_\sharp$ on $\hat \Z\otimes_{\pZB}\hat C_2$ is easily seen to induce the natural map on $\bar{R} / [\bar{R}, \hat F]$ given by $\tilde\phi$; and naturality of the quotient map
\[ H^1 ( \bar{R} / [\bar{R}, \hat F]; \Z/n)\twoheadrightarrow H^2(\hat B;\Z/n)\]
coming from the five-term exact sequence shows that $\phi^\sharp$ will indeed give the correct action on $H^2$.

Finally, we can compute this action on $H^2(\hat B;\Z/n)$. We have from above
\begin{align*}
\phi^\sharp(y^\ast_0) &= \kappa y^\ast_0\\
\phi^\sharp(y^\ast_i) &= \mu_i y^\ast_0 + k_i y^\ast_i\\
d^2(x^\ast_i) &= y^\ast_0 + p_i y^\ast_i\\
d^2(\bar{u}^\ast_i) &= 0 = d^2(\bar{v}^\ast_i)
\end{align*}
so that, given a cochain $\zeta = by^\ast_0 -\sum_i q_i y^\ast_i$, we have
\begin{align*} 
\phi^\ast ([\zeta])=[\phi^\sharp(\zeta)] &= [(\kappa b- \sum q_i\mu_i)y^\ast_0 -  \sum  q_i k_i y^\ast_i]\\
& = [\kappa (b y^\ast_0 - \sum q_i y^\ast_i) - \sum q_i \mu_i(y^\ast_0 + p_i y^\ast_i)]\\
&= \kappa [\zeta]
\end{align*}
\end{proof}
\end{prop}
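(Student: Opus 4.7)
The plan is to compute $H^2(\hat B;\Z/n)$ from a concrete small resolution and then read off $\phi^\ast$ by lifting $\phi$ to a chain endomorphism. First I would construct a partial free resolution of $\Z$ by $\Z B$-modules modelled on the Cayley $2$-complex of the given presentation of $B$: take $C_0=\Z B$ with augmentation, $C_1$ free on generators $x_i,\bar u_j,\bar v_j$ (one per generator of $B$), and $C_2$ free on generators $y_0,y_1,\ldots,y_r$ corresponding to the defining relators (with $y_0$ the ``surface'' relator $a_1\cdots a_r[u_1,v_1]\cdots[u_g,v_g]$ and $y_i$ the torsion relator $a_i^{p_i}$). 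A direct analysis of $\ker d_2$ in $\Z B$ shows, once the degenerate cases $r=0,1$ are discarded via Corollary \ref{noambig}, that this kernel is generated by the elements $(a_i-1)y_i$, reflecting the $2$-periodicity of the free resolution of $\Z$ over $\Z\langle a_i\rangle$. Adjoining $C_3=\Z B\{z_1,\ldots,z_r\}$ with $d_3(z_i)=(a_i-1)y_i$ gives a partial resolution out to degree three. Since $B$ is good (Fuchsian), Proposition \ref{goodgivesexact} promotes this to a partial free resolution of $\hat\Z$ by \pZB-modules on the same generators with the same boundary formulas.

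Next I would apply $\hat\Z\otimes_{\pZB}-$ and ${\rm Hom}_{\hat\Z}(-,\Z/n)$ to get an explicit three-term cochain complex; the important differential is $d^2$, whose adjoint sends $y_0\mapsto x_1+\cdots+x_r$ and $y_i\mapsto p_ix_i$ on the coefficient side. To extract the action of $\phi^\ast$, lift $\phi$ (using Proposition \ref{constrainauto}) to $\tilde\phi:\hat F\to\hat F$ with $\tilde\phi(a_i)=g_i^{-1}a_i^{k_i}g_i$, and build a chain endomorphism $\phi_\sharp$ of the resolution covering $\phi$. On the torsion generators there is a natural choice $\phi_\sharp(y_i)=k_ig_i\cdot y_i$ coming from writing $\tilde\phi(a_i^{p_i})$ as the obvious conjugate of $a_i^{p_i}$, and on the surface generator we are forced to set $\phi_\sharp(y_0)=\kappa y_0+\sum\mu_iy_i$ for scalars $\kappa,\mu_i\in\hat\Z$ that the chain-map condition will pin down.

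Finally, requiring $\phi_\sharp$ to commute with $d_2$ after killing the $\hat B$-action compares the coefficients of $x_i$ on both sides and yields the single relation $\kappa+p_i\mu_i=k_i$ in $\hat\Z$ for each $i$; hence $\kappa$ is a single profinite integer with $\kappa\equiv k_i\pmod{p_i}$ for every $i$. A one-line cochain calculation on $\zeta=by_0^\ast-\sum q_iy_i^\ast$ then shows that $\phi^\sharp(\zeta)-\kappa\zeta$ is a coboundary, so $\phi^\ast$ acts on $H^2(\hat B;\Z/n)$ as multiplication by $\kappa$. The main technical obstacle, and the one I would want to be most careful about, is verifying that the chain-level map $\phi^\sharp$ really computes the functorial $\phi^\ast$: one needs the natural surjection from $\hat\Z\otimes_{\pZB}\hat C_2$ onto $\bar R/[\bar R,\hat F]$ (the latter need not be a free profinite abelian group, since closed subgroups of free profinite groups need not be free) to intertwine $\phi_\sharp$ with the map induced by $\tilde\phi$, and then to appeal to naturality of the profinite five-term exact sequence attached to $1\to\bar R\to\hat F\to\hat B\to1$ to transport this to the desired action on $H^2$.
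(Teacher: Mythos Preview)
Your proposal is correct and follows essentially the same route as the paper: the same Cayley-complex resolution, the same extension to degree three via $(a_i-1)y_i$, promotion to the profinite setting by goodness and Proposition~\ref{goodgivesexact}, the same lift $\tilde\phi$ and chain map $\phi_\sharp$, the relation $\kappa+p_i\mu_i=k_i$ from the chain-map condition, and the identical verification via $\bar R/[\bar R,\hat F]$ and the five-term sequence that $\phi^\sharp$ computes the functorial $\phi^\ast$. Even your flagged technical obstacle is exactly the point the paper pauses to justify.
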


\begin{proof}[Proof of Theorem \ref{mainthm}]
Recall that we have reduced to the case of orientable base orbifold. As discussed in section \ref{centralexts}, our manifolds $M_1$, $M_2$ are determined by cohomology classes $\eta_1,\eta_2\in H^2(B;\Z)$. If 
\[ M_1 = (b, \Sigma; (p_1, q_1),\ldots, (p_r, q_r))  \]
Then as a cochain in the basis $y^\ast_0,\ldots, y^\ast_r$ of ${\rm Hom}_{\Z B}(C_2,\Z)$ where $C_\bullet$ is the partial resolution defined above, we have (see section \ref{centralexts})
\[ \eta_1 = [by^\ast_0 - \sum_{1\leq i\leq r} q_i y^\ast_i] \]
and similarly for $\eta_2$. From these we get cohomology classes $\zeta_{i,n}\in H^2(\hat B;\Z/n)$. Suppose that $\widehat{\pi_1(M_1)}\iso\widehat{\pi_1(M_2)}$. Then, after possibly reordering the exceptional fibres of $M_2$, we have that the exists $\kappa\in\hat\Z$ such that $\kappa \zeta_{1,n} = \zeta_{2,n}$ for all $n$. It is a consequence of the previous proposition that an automorphism of the base induces such an effect on the cohomology groups; we may also rescale the fibres of the $M_i$ by an automorphism of $\hat \Z$, giving an automorphism of the coefficient ring of $H^2$. But this is simply multiplication of the cohomology class by some element of $\hat{\Z}{}^{\!\times}$, which we merge into $\kappa$.

If the $M_i$ have non-zero Euler number $e>0$ (by reversing the orientation on the fibres we can always force $e>0$ for both manifolds), choose $n= me\prod p_i$ for some integer $m$, and define a group homomorphism $E:H^2(\hat B;\Z/n)\to\Z/n$ by 
\[ E(\sum t_i y^\ast_i) = -t_0\prod p_j + \sum_{i\neq 0} t_i\prod_{j\neq i} p_j \]
so that $E(\kappa \xi)= \kappa E(\xi)$. Since $e=-(b+\sum  q_i/p_i)$, we have $E(\zeta_{1,n}) = e\prod p_j$ modulo $n$; then 
\[E(\kappa\zeta_{1,n}-\zeta_{2,n}) = (\kappa -1)e\prod p_j = 0 \text{ modulo }n\]  
so that $\kappa \zeta_{1,n} = \zeta_{2,n}$ for all $n=me\prod p_j$ can only hold if $\kappa$ is congruent to $1$ modulo $m$ for all $m$, i.e. if $\kappa=1$ and $\zeta_{1,n} = \zeta_{2,n}$ for all $n$, so that $\eta_1=\eta_2$ and $M_1$, $M_2$ are homeomorphic.

If the $M_i$ have Euler number zero, so that they are \HR{} manifolds, choose $n=\prod p_i$ and $k\in\Z$ such that $k$ is congruent to $\kappa$ mod $n$. Then $M_2$ is a \SFS{} with zero Euler characteristic and Seifert invariants $(p_i,kq_i)$; there is only one such, and Hempel showed that these pairs of \HR{} manifolds are precisely those surface bundles in the statement of the theorem.
\end{proof}

Rather easier is the bounded case, given sensible conditions on the boundary. 
\begin{theorem}\label{mainbddthm}
Let $M_1, M_2$ be orientable \SFS{}s with boundary, and assume that there exists an isomorphism $\Phi:\widehat{\pi_1 M_1}\to \widehat{\pi_1 M_2}$ inducing an isomorphism of peripheral systems, in the following sense. The boundary components of $M_1$ determine a conjugacy class of $\Z^2$-subgroups in the fundamental group, which gives a conjugacy class of $\hat\Z{}^2$-subgroups in the profinite completion. The isomorphism $\Phi$ is required to send one such set of conjugacy classes to the other, inducing isomorphisms on the matched copies of $\hat\Z{}^2$. 

Let $M_1$ have Seifert invariants $(p_i, q_i)$. Then for some $k\in \Z$ coprime to all $p_i$, $M_2$ is the \SFS{} with the same base orbifold and Seifert invariants $(p_i, kq_i)$.
\begin{proof}
We can safely focus on hyperbolic base orbifolds, the other three Seifert fibre spaces with boundary being easily distinguished from these and each other by their first homology, hence by the profinite completion. As before, we have already reduced to the case of orientable base orbifold.

Note that two boundary components of the base orbifolds generate distinct free $\hat \Z$ factors of the base orbifold group, and the standard theory of free profinite products (see Theorem 9.1.12 of \cite{ribeszalesskii}) shows that these are not conjugate in the profinite completion; so the number of peripheral conjugacy classes remains the same as the number of boundary components. Then by Theorem \ref{boundedbaseinvariant} both \SFS{}s share the same base orbifold $O$, and there is an induced automorphism of $\hat B = \widehat{\ofg O}$. As before, we can now consider the \SFS{}s as being represented by elements of $H^2(\hat B;\Z/n)$ for arbitrary $n$.

Take a presentation 
\[ B=\big<a_1,\ldots,a_r,b_1,\ldots,b_s,u_1,v_1,\ldots, u_g,v_g\,\big|\, a_1^{p_1},\ldots, a_r^{p_r}\big>\]
for the base orbifold, where the $a_i$ are the cone points and the (conjugacy classes of the) $b_i$ give all but one of the boundary components; the remaining boundary component is 
\[ b_0 = a_1\cdots a_r\cdot b_1 \cdots b_s\cdot [u_1,v_1]\cdots [u_g,v_g] \]
As before, we are at liberty to permute cone points with the same order, and permuting boundary components is also permitted. Thus given Proposition \ref{constrainauto} and the conditions of the theorem we may assume that the  automorphism $\phi$ of $\hat B$ induced by $\Phi$ is of the form 
\[ a_i\mapsto (a_i^{k_i})^{g_i},\, b_j\mapsto (b_j^{l_j})^{h_j}\]
for elements $g_i,h_j$ of $\hat B$, $l_j\in\hat\Z{}^{\!\times}$, and $k_i$ coprime to $p_i$.

Now the induced automorphism of 
\[ H_1(\hat B) = \hat B_{\rm ab} = \bigoplus_{i=1}^r \Z/p_i \oplus \bigoplus_{j=1}^s \hat \Z \oplus \hat\Z{}^{2g}\]
sends the class of $b_0$ to
\[\phi_\ast ([b_0]) = \phi_\ast(\sum [a_i]+\sum_{j\neq 0} [b_j]) = \sum k_i[a_i] + \sum_{j\neq 0} l_j[b_j]\]
and on the other hand to 
\[l_0 [b_0] = \sum l_0[a_i]+\sum_{j\neq 0} l_0[b_j]\]
showing that all the $k_i$ are congruent to $l_0$ modulo $p_i$ and that all the $l_i$ are equal.

Using essentially the same chain complex as in the closed case we can now compute that the action on 
\[H^2(\hat B; \Z/\prod p_i) = \bigoplus_{i=1}^r \Z/p_i\] is multiplication by $l_0$, or equivalently multiplication by some $k\in \Z$ congruent to $l_0$ modulo $\prod p_i$, thus taking the element $(q_1,\ldots, q_r)$ representing $M_1$ to the element representing $M_2$, which we now see to be $(kq_1,\ldots, kq_r)$.
\end{proof}
\end{theorem}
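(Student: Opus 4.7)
The strategy parallels the closed case of Theorem \ref{mainthm}, with the peripheral hypothesis substituting for the role played by the Euler number. I would structure the argument in four steps.

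First I would eliminate the easy cases. Any non-hyperbolic bounded orientable \SFS\ has virtually abelian fundamental group and is distinguished from the rest, and from the others of its kind, by the first homology (which is a profinite invariant). So I may assume both $M_i$ have hyperbolic base orbifolds. The canonical index-2 cover reduction from the start of the section lets me also assume the base orbifold is orientable. Theorem \ref{boundedbaseinvariant} then supplies a common base orbifold $O$, together with an induced automorphism $\phi$ of $\hat B = \widehat{\ofg[(O)]}$.

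Second I would constrain the form of $\phi$. Fix the standard presentation
\[ B=\big<a_1,\ldots,a_r,b_1,\ldots,b_s,u_1,v_1,\ldots, u_g,v_g\,\big|\, a_1^{p_1},\ldots, a_r^{p_r}\big>, \]
with the remaining boundary curve $b_0$ expressed as the product of the other generators and commutators. By Proposition \ref{constrainauto} (which the author notes applies in the bounded case), each $a_i$ is sent to a conjugate of $a_i^{k_i}$ for some $k_i$ coprime to $p_i$, after a harmless relabelling. The peripheral hypothesis, combined with the fact that distinct boundary components of $O$ generate non-conjugate $\hat\Z^2$-subgroups of $\hat B$ (a consequence of the free profinite product structure of $\hat B$, using Theorem 9.1.12 of \cite{ribeszalesskii}), forces each $b_j$ to be sent to a conjugate of $b_j^{l_j}$ for some $l_j\in \hat\Z^{\!\times}$, again up to relabelling.

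Third I would extract the key scalar by passing to the abelianization. Computing $\phi_\ast[b_0]$ in $H_1(\hat B)$ two ways — once using the expression for $b_0$ as a product, giving $\sum k_i[a_i]+\sum_{j\neq 0}l_j[b_j]$, and once as $l_0[b_0]=l_0\sum[a_i]+l_0\sum_{j\neq 0}[b_j]$ — forces all the $l_j$ to coincide with a single unit $l_0\in\hat\Z^{\!\times}$ and each $k_i$ to be congruent to $l_0$ modulo $p_i$.

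Finally I would run the cohomological computation. As in section \ref{centralexts}, both $M_1$ and $M_2$ are classified by cohomology classes in $H^2(\hat B;\Z/n)$ for every $n$. Building a partial free $\hat\Z[[\hat B]]$-resolution of $\hat\Z$ analogous to the closed case, but now shorter because there is no global long relation — $b_0$ is simply a generator, so only the torsion relators $a_i^{p_i}$ produce a $C_2$ — I would verify that
\[ H^2(\hat B;\Z/\textstyle\prod p_i) \iso \bigoplus_{i=1}^r \Z/p_i, \]
and that $\phi^\ast$ acts on this group as multiplication by $l_0$. Choosing any integer $k$ congruent to $l_0$ modulo $\prod p_i$ (automatically coprime to each $p_i$), the class $(q_1,\ldots,q_r)$ representing $M_1$ is sent to $(kq_1,\ldots,kq_r)$, which must then represent $M_2$. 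The main obstacle is step three — cleanly isolating the single scalar $l_0$ from the peripheral data; once this is done the cohomology computation is actually simpler than in the closed case because the absence of a ``$y_0$'' generator eliminates the Euler-number ambiguity that forced the special argument at the end of Theorem \ref{mainthm}.
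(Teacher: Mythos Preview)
Your proposal is correct and follows essentially the same route as the paper's proof: reduce to hyperbolic orientable base, invoke Theorem \ref{boundedbaseinvariant} for a common base orbifold, use Proposition \ref{constrainauto} plus the peripheral hypothesis to put $\phi$ into the form $a_i\mapsto (a_i^{k_i})^{g_i}$, $b_j\mapsto (b_j^{l_j})^{h_j}$, extract the single scalar $l_0$ via the abelianization computation on $[b_0]$, and then read off the action on $H^2(\hat B;\Z/\prod p_i)$. Two small points to tidy: the free-profinite-product non-conjugacy argument should be invoked \emph{before} Theorem \ref{boundedbaseinvariant} (since that theorem needs equal boundary counts as a hypothesis), and the non-conjugate peripheral subgroups of $\hat B$ are copies of $\hat\Z$, not $\hat\Z{}^2$.
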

We finally prove the converse to the last theorem. A mild adjustment to this argument, with the appropriate modification of the cohomology group considered, provides another proof of Hempel's theorem on closed \SFS{}s.
\begin{theorem}\label{conversebdd}
Let $M_1, M_2$ be \SFS{}s with non-empty boundary and with the same base orbifold $O$. Suppose $M_1$ has Seifert invariants $(p_i,q_i)$ and $M_2$ has Seifert invariants $(p_i,kq_i)$ where $k$ is some integer coprime to every $p_i$. Then $\widehat{\pi_1 M_1}\iso \widehat{\pi_1 M_2}$.
\begin{proof}
Again it suffices to deal with the case of orientable base orbifold. Let $\Gamma_i =\widehat{\pi_1 M_i}$, let $h_i$ be a generator of the centre of $\pi_1 M_i$, and let 
\[ \Gamma_{i,n} = \Gamma_i / \overline{<\!h^{t}\!>} \]
where $t=n\prod p_i$.

Note that for each $i$ the $\Gamma_{i,n}$ form a natural inverse system with maps $\Gamma_{i,nm}\to \Gamma_{i,n}$. Furthermore, any map from $\Gamma_i$ to a finite group must kill some power of $h$, and hence factors through some $\Gamma_{i,n}$. It follows that
\[ \Gamma_i = \varprojlim_{n} \Gamma_{i,n} \]

Now $k$ maps to an invertible element of $\Z/\prod p_i$; then there is some  invertible element $\kappa$ of $\hat \Z$ congruent to $k$ modulo each $p_i$. One can prove this by noting that by the Chinese Remainder theorem the natural map $(\Z/mn)^{\!\times} \to (\Z/n)^{\!\times}$ is always surjective, hence so is the map $\hat \Z{}^{\!\times}\to (\Z/n)^{\!\times}$.

As discussed in Section \ref{centralexts}, $\Gamma_{i,n}$ is classified by an element
\[\zeta_i \in H^2(\hat B;\Z/t)=\bigoplus_{j=1}^r \Z/p_j\]
where $B$ is the base orbifold group; by assumption $\zeta_2 = k\zeta_1 = \kappa \zeta_1$. Multiplication of the coefficient group $\kappa$ gives an automorphism of the cohomology group taking $\zeta_1$ to $\zeta_2$, hence induces an isomorphism $\Gamma_{1,n}\to \Gamma_{2,n}$. Moreover this isomorphism is compatible with the quotient maps $\Gamma_{i,nm}\to \Gamma_{i,n}$; hence we have an isomorphism
\[ \Gamma_1 = \varprojlim_{n} \Gamma_{1,n}\iso \varprojlim_{n} \Gamma_{2,n} =\Gamma_{2}\]
as required.
\end{proof}
\end{theorem}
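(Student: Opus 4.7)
The plan is to realise each profinite completion as an inverse limit of finite-fibre quotients and then match them level by level using the classification of central extensions by elements of $H^2(\hat B;\Z/t)$. Concretely, write $\Gamma_i=\widehat{\pi_1 M_i}$, let $h_i$ be a regular fibre generator in $\pi_1 M_i$ and set
\[\Gamma_{i,n}=\Gamma_i\big/\overline{\langle h_i^{t}\rangle},\quad t=n\prod_j p_j.\]
Since $h_i$ is central and any homomorphism from $\pi_1 M_i$ to a finite group must kill some power of $h_i$, every open normal subgroup of $\Gamma_i$ contains some $\overline{\langle h_i^{t}\rangle}$, and hence $\Gamma_i=\varprojlim_n \Gamma_{i,n}$. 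It therefore suffices to construct compatible isomorphisms $\Gamma_{1,n}\to \Gamma_{2,n}$.

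Next I would apply the machinery of Section~\ref{centralexts}: each $\Gamma_{i,n}$ is a central extension of $\hat B$ by $\Z/t$, classified by a class $\zeta_i^{(n)}\in H^2(\hat B;\Z/t)$, and goodness of $B$ identifies this with $H^2(B;\Z/t)$. Using the bounded variant of the partial resolution built in the previous subsection (with no $y_0$-relation), the explicit computation of Section~\ref{centralexts} shows that with respect to the basis dual to $y_1,\ldots,y_r$ one has
\[\zeta_1^{(n)}=-\sum_{j=1}^r q_j y_j^\ast,\qquad \zeta_2^{(n)}=-\sum_{j=1}^r kq_j y_j^\ast,\]
taken modulo $t$, and moreover $H^2(\hat B;\Z/t)\cong \bigoplus_j \Z/p_j$ so that only the residues of the $q_j$ modulo $p_j$ matter.

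The heart of the argument is to produce a single profinite integer $\kappa\in\hat\Z^\times$ with $\kappa\equiv k\pmod{p_j}$ for all $j$, and then use multiplication-by-$\kappa$ on the coefficient groups as an automorphism carrying $\zeta_1^{(n)}$ to $\zeta_2^{(n)}$. Existence of $\kappa$ follows from surjectivity of $\hat\Z^\times\to (\Z/\prod p_j)^\times$, which in turn comes from the surjectivity of $(\Z/mn)^\times\to (\Z/n)^\times$ together with the Chinese Remainder Theorem. For each $n$, multiplication by $\kappa$ modulo $t$ is an automorphism of $\Z/t$ (since $\kappa\in\hat\Z^\times$), and acts on $H^2(\hat B;\Z/t)$ as the induced coefficient automorphism; by construction it sends $\zeta_1^{(n)}$ to $\zeta_2^{(n)}$ and therefore induces an isomorphism $\Gamma_{1,n}\to\Gamma_{2,n}$.

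The only remaining point, and the one most likely to require care, is compatibility with the bonding maps $\Gamma_{i,mn}\to\Gamma_{i,n}$ so that we may pass to the inverse limit. This amounts to the statement that multiplication by the fixed element $\kappa\in\hat\Z^\times$ commutes with the reduction maps $\Z/(mn\prod p_j)\to\Z/(n\prod p_j)$, which is immediate from the definition. Assembling these isomorphisms along the inverse system yields $\Gamma_1=\varprojlim_n\Gamma_{1,n}\cong\varprojlim_n\Gamma_{2,n}=\Gamma_2$, completing the proof.
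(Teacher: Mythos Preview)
Your proposal is correct and follows essentially the same approach as the paper's proof: realise each $\Gamma_i$ as the inverse limit of the quotients $\Gamma_{i,n}$, classify these as central extensions via $H^2(\hat B;\Z/t)$, lift $k$ to a unit $\kappa\in\hat\Z^\times$, and use multiplication by $\kappa$ on coefficients to produce compatible isomorphisms. The only point you leave implicit is the reduction to orientable base orbifold (your assertion that $h_i$ is central only holds in that case), which the paper dispatches with a one-line reference to the earlier reduction argument.
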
 
\nocite{thurstonbook}
\nocite{thurstonnotes}

\bibliographystyle{alpha}
\bibliography{SFS}

\begin{thebibliography}{DDSMS03}

\bibitem[Ago13]{agol}
Ian Agol.
\newblock {The virtual Haken conjecture}.
\newblock {\em {Documenta Mathematica}}, 18:1045--1087, 2013.

\bibitem[And74]{anderson74}
Michael~P. Anderson.
\newblock {Exactness properties of profinite completion functors}.
\newblock {\em {Topology}}, 13(3):229--239, 1974.

\bibitem[Asa01]{asada01}
Mamoru Asada.
\newblock {On centerfree quotients of surface groups}.
\newblock {\em {Communications in Algebra}}, 29(7):2871--2875, 2001.

\bibitem[BCR14]{BCR14}
Martin~R. Bridson, Marston~DE Conder, and Alan~W. Reid.
\newblock {Determining Fuchsian groups by their finite quotients}.
\newblock {\em {arXiv preprint arXiv:1401.3645}}, 2014.

\bibitem[BF15]{boileaufriedl}
Michel Boileau and Stefan Friedl.
\newblock {The profinite completion of $3 $-manifold groups, fiberedness and
  the Thurston norm}.
\newblock {\em {arXiv preprint arXiv:1505.07799}}, 2015.

\bibitem[BR15]{bridsonreid15}
MR~Bridson and Alan Reid.
\newblock {Profinite rigidity, fibering, and the figure-eight knot}.
\newblock {\em {arXiv preprint arXiv:1505.07886}}, 2015.

\bibitem[Bri07]{brinnotes}
Matthew~G. Brin.
\newblock {Seifert Fibered Spaces: Notes for a course given in the Spring of
  1993}.
\newblock {\em {arXiv preprint arXiv:0711.1346}}, 2007.

\bibitem[CE99]{cartaneilenberg}
Henri Cartan and Samuel Eilenberg.
\newblock {\em {Homological algebra}}, volume~19.
\newblock {Princeton University Press}, 1999.

\bibitem[DDSMS03]{analyticprop}
John~D. Dixon, Marcus~PF Du~Sautoy, Avinoam Mann, and Dan Segal.
\newblock {\em {Analytic pro-p groups}}, volume~61.
\newblock {Cambridge University Press}, 2003.

\bibitem[DFPR82]{dixon82}
John~D. Dixon, Edward~W. Formanek, John~C. Poland, and Luis Ribes.
\newblock {Profinite completions and isomorphic finite quotients}.
\newblock {\em {Journal of Pure and Applied Algebra}}, 23(3):227--231, 1982.

\bibitem[Fun13]{funar13}
Louis Funar.
\newblock {Torus bundles not distinguished by TQFT invariants}.
\newblock {\em {Geometry \& Topology}}, 17(4):2289--2344, 2013.

\bibitem[GJZZ08]{GJ_ZZ08}
F.~Grunewald, A.~Jaikin-Zapirain, and P.~A. Zalesskii.
\newblock {Cohomological goodness and the profinite completion of Bianchi
  groups}.
\newblock {\em {Duke Mathematical Journal}}, 144(1):53--72, 2008.

\bibitem[Hem72]{hempel72}
John Hempel.
\newblock {Residual finiteness of surface groups}.
\newblock In {\em {Proc. Am. Math. Soc}}, volume~32, page 323, 1972.

\bibitem[Hem87]{hempel87}
John Hempel.
\newblock {Residual finiteness for 3-manifolds}.
\newblock {\em {Combinatorial group theory and topology}}, 111:379--396, 1987.

\bibitem[Hem14]{hempel14}
John Hempel.
\newblock {Some 3-manifold groups with the same finite quotients}.
\newblock {\em {arXiv preprint arXiv:1409.3509}}, 2014.

\bibitem[Lac14]{lackenby14}
Marc Lackenby.
\newblock {Finding disjoint surfaces in 3-manifolds}.
\newblock {\em {Geometriae Dedicata}}, 170(1):385--401, 2014.

\bibitem[Nak94]{nakamura94}
Hiroaki Nakamura.
\newblock {Galois rigidity of pure sphere braid groups and profinite calculus}.
\newblock 1994.

\bibitem[NR78]{NR78}
Walter~D. Neumann and Frank Raymond.
\newblock {Seifert manifolds, plumbing, {$\mu$}-invariant and orientation
  reversing maps}.
\newblock In {\em {Algebraic and geometric topology}}, pages 163--196.
  {Springer}, 1978.

\bibitem[NS07]{nikolovsegal}
Nikolay Nikolov and Dan Segal.
\newblock {On finitely generated profinite groups, I: strong completeness and
  uniform bounds}.
\newblock {\em {Annals of mathematics}}, pages 171--238, 2007.

\bibitem[RZ00]{ribeszalesskii}
Luis Ribes and Pavel Zalesskii.
\newblock {\em {Profinite groups}}.
\newblock {Springer}, 2000.

\bibitem[Sco78]{scott78}
Peter Scott.
\newblock {Subgroups of surface groups are almost geometric}.
\newblock {\em {Journal of the London Mathematical Society}}, 2(3):555--565,
  1978.

\bibitem[Sco83]{scott83}
Peter Scott.
\newblock {The geometries of 3-manifolds}.
\newblock {\em {Bulletin of the London Mathematical Society}}, 15(5):401--487,
  1983.

\bibitem[Ser13]{Serrecohom}
Jean-Pierre Serre.
\newblock {\em {Galois cohomology}}.
\newblock {Springer Science \& Business Media}, 2013.

\bibitem[Ste72]{stebe72}
P.~F. Stebe.
\newblock {Conjugacy separability of certain Fuchsian groups}.
\newblock {\em {Transactions of the American Mathematical Society}},
  163:173--188, 1972.

\bibitem[Thu78]{thurstonnotes}
William~P. Thurston.
\newblock {\em {The geometry and topology of three-manifolds}}.
\newblock {MIT Press}, 1978.

\bibitem[TL97]{thurstonbook}
William~P. Thurston and Silvio Levy.
\newblock {\em {Three-dimensional geometry and topology}}, volume~1.
\newblock {Princeton university press}, 1997.

\bibitem[Wei95]{weibel}
Charles~A. Weibel.
\newblock {\em {An introduction to homological algebra}}.
\newblock Number~38. {Cambridge university press}, 1995.

\bibitem[WZ10]{WZ10}
Henry Wilton and Pavel Zalesskii.
\newblock {Profinite properties of graph manifolds}.
\newblock {\em {Geometriae Dedicata}}, 147(1):29--45, 2010.

\bibitem[WZ14]{WZ14}
Henry Wilton and Pavel Zalesskii.
\newblock {Distinguishing geometries using finite quotients}.
\newblock {\em {arXiv preprint arXiv:1411.5212}}, 2014.

\end{thebibliography}
\end{document}